\definecolor{gray}{rgb}{0.3, 0.3, 0.8}
\definecolor{light-gray}{gray}{0.95}
\definecolor{darkgreen}{rgb}{0.0,0.6,0.0}
\newcommand{\primal}{\bm{\gamma}}
\newcommand{\realdual}{\bm{\gamma^*}}
\newcommand{\dual}{\bm{\delta}}
\newcommand{\ent}{\dual^\mathbf{Ent}}
\newcommand{\Z}{\mathbb{Z}}
\newcommand{\R}{\mathbb{R}}
\newcommand{\BP}{\mathbb{P}}
\newcommand{\BE}{\mathbb{E}}
\newcommand{\BR}{\mathbb{R}}
\newcommand{\RR}{\mathbb{R}}
\newcommand{\eps}{\mathsf{r}}
\newcommand{\CA}{\mathcal A}
\newcommand{\CE}{\mathcal E}
\newcommand{\CCT}{\mathcal C}
\newcommand{\CS}{\mathcal S}
\newcommand{\CT}{\mathcal T}
\newcommand{\CP}{\mathcal{P}}
\newcommand{\CX}{\mathcal X}
\newcommand{\CY}{\mathcal Y}
\newcommand{\E}{\mathbb{E}}
\newcommand{\supp}{\mathsf{supp}}
\newcommand{\ind}{\mathbf{1}}
\newcommand{\diam}{\mathsf{diam}}
\newcommand{\valch}{\mathsf{val}_h}
\newcommand{\valsep}{\mathsf{val}_h}
\DeclareMathOperator*{\argmax}{arg\,max}
\DeclareMathOperator*{\argmin}{arg\,min}
\newcommand{\taucov}{\tau_{\text{cov}}}
\newcommand{\tcov}{t_{\text{cov}}}
\newcommand{\LN}{\mathcal L}
\newcommand{\val}{\mathsf{val}}
\newcommand{\depth}{\mathsf{depth}}
\newcommand{\N}{\mathbb{N}}
\newcommand{\CL}{\mathcal{L}}
\newcommand{\Ch}{1}
\newcommand{\HH}{H}
\newcommand{\defeq}{\coloneqq}
\renewcommand{\Pr}{\BP}
\newtheorem{theorem}{Theorem}[section]
\newtheorem{claim}[theorem]{Claim}
\newtheorem{corollary}[theorem]{Corollary}
\newtheorem{lemma}[theorem]{Lemma}
\numberwithin{equation}{section}
\numberwithin{figure}{section}
\theoremstyle{definition}
\newtheorem{definition}{Definition}[section]
\newcommand{\expref}[2]{{\texorpdfstring{\hyperref[#2]{#1~\ref{#2}}}{#1~\ref{#2}}}} 
\newcommand{\secref}[1]{\expref{Section}{#1}}
\newcommand{\thmref}[1]{\expref{Theorem}{#1}}
\newcommand{\defref}[1]{\expref{Definition}{#1}}
\newcommand{\clmref}[1]{\expref{Claim}{#1}}
\newcommand{\propref}[1]{\expref{Proposition}{#1}}
\newcommand{\appref}[1]{\expref{Appendix}{#1}}
\newcommand{\lref}[1]{\expref{Lemma}{#1}}
\newcommand{\corref}[1]{\expref{Corollary}{#1}}
\newcommand{\pref}[1]{\expref{Proposition}{#1}}
\begin{document}

\title{Majorizing Measures for the Optimizer}

\author{Sander Borst$^1$\thanks{\texttt{sander.borst@cwi.nl}. Supported by the ERC Starting grant QIP--805241.} \and
Daniel Dadush$^1$\thanks{\texttt{d.n.dadush@cwi.nl}.
Supported by the ERC Starting grant QIP--805241.
} \and
Neil Olver$^2$\thanks{\texttt{n.olver@lse.ac.uk}. Supported by the NWO VIDI grant 016.Vidi.189.087.
} \and
Makrand Sinha$^1$\thanks{ \texttt{makrand.sinha@cwi.nl}. Supported by the NWO VICI grant 639.023.812.}
}

\date{%
	\footnotesize
    $^1$Centrum Wiskunde and Informatica, Amsterdam, The Netherlands\\%
    $^2$London School of Economics and Political Science, London, UK
}

\maketitle
\begin{abstract}
The theory of majorizing measures, extensively developed by Fernique, Talagrand
and many others, provides one of the most general frameworks for controlling the
behavior of stochastic processes. In particular, it can be applied to derive
quantitative bounds on the expected suprema and the degree of continuity of
sample paths for many processes.

One of the crowning achievements of the theory is Talagrand's tight alternative
characterization of the suprema of Gaussian processes in terms of majorizing
measures. The proof of this theorem was difficult, and thus considerable effort
was put into the task of developing both shorter and easier to understand
proofs. A major reason for this difficulty was considered to be theory of
majorizing measures itself, which had the reputation of being opaque and
mysterious. As a consequence, most recent treatments of the theory (including by
Talagrand himself) have eschewed the use of majorizing measures in favor of a purely
combinatorial approach (the \emph{generic chaining}) where objects based on
sequences of partitions provide roughly matching upper and lower bounds on the
desired expected supremum.
 
In this paper, we return to majorizing measures as a primary object of study,
and give a viewpoint that we think is natural and clarifying from an
optimization perspective. As our main contribution, we give an algorithmic proof
of the majorizing measures theorem based on two parts:
\begin{itemize}
\item We make the simple (but apparently new) observation that finding the best majorizing measure can be cast as a convex program.
  This also allows for efficiently computing the measure using off-the-shelf methods from convex optimization.
  \item We obtain tree-based upper and lower bound certificates by \emph{rounding}, in a series of steps, the primal and dual solutions to this convex program.
\end{itemize}
While duality has conceptually been part of the theory since its beginnings, as far as we are aware no explicit link to convex optimization has been previously made.

\end{abstract}

\renewcommand{\E}{\BE}
\renewcommand{\diam}{\mathbf{D}}
\newcommand{\CN}{\mathcal{N}}
\renewcommand{\deg}{\mathrm{deg}_+}

\section{Introduction}

Let $(Z_x)_{x \in X}$ denote a family of centered (mean zero) jointly Gaussian random
variables, indexed by points of a set $X$. A fundamental statistic of
such a process is the expected supremum $\E[\sup_{x \in X} Z_x]$, which provides
an important measure of the size of the process. This statistic has applications
in a wide variety of areas. We list some relevant examples. In convex geometry,
one can associate a process to any symmetric convex body $K$, whose supremum
gives lower bounds on the size of the largest nearly spherical sections of
$K$~\cite{Milman71}. In the context of dimensionality reduction, one can
associate a Gaussian process to any point set $S$ in $\R^d$ whose squared expected supremum
upper bounds the projection dimension needed to approximately preserve distances
between points in $S$~\cite{Gordon88,ORS18}. In the study of Markov Chains, the square of the
expected supremum of the Gaussian free field of a graph $G$ was shown to
characterize the cover time of the simple random walk on $G$~\cite{DLP12}.  

The above list of applications, which is by no means exhaustive, help motivate
the interest in many areas of Mathematics for obtaining a fine grained
understanding of such suprema. We now retrace some of the key 
developments in the theory of Gaussian processes leading up to Talagrand's
celebrated majorizing measure theorem~\cite{Talagrand87}, which gives an
alternate characterization of Gaussian suprema in terms of an optimization
problem over measures on $X$. The goal of this paper is to give a novel
optimization based perspective on this theory, as well as a new
\emph{constructive} proof of Talagrand's theorem. For this purpose, some of the
earlier concepts, in particular, majorizing measures, will be central to the
exposition. We will also cover some generalizations of the theory to the
non-Gaussian setting, as our results will be applicable there as well.
Throughout our exposition, we rely on the terminology introduced by van
Handel~\cite{vH16} for the various combinatorial objects within the theory
(i.e., labelled nets, admissible nets and packing trees).

\subsection{Bounding the Supremum of Stochastic Processes}

In what follows we use the notation $A \lesssim B$ ($A \gtrsim B$) if there
exists an absolute constant $c > 0$ such that $A \leq c B$ ($cA \geq B$). We
use $A \asymp B$ to denote $A \lesssim B$ and $A \gtrsim B$.

A first basic question one may ask is what information about the Gaussian
process $(Z_x)_{x \in X}$ is sufficient to exactly characterize the expected supremum? An
answer to this problem was given by Sudakov~\cite{Sudakov71}, strengthening a
result of Slepian~\cite{Slepian62}. Sudakov showed that it is uniquely identified by
the natural (pseudo) distance metric 
\begin{equation}
\label{eq:gauss-metric}
d(u,v) \defeq \E[(Z_{u}-Z_{v})^2]^{1/2}, ~~~~ \forall u,v \in X. 
\end{equation}
In fact, Sudakov proved the following stronger comparison theorem: if $(Y_x)_{x \in
X}$ and $(Z_x)_{x \in X}$ are Gaussian processes on the same index set $X$ and for every
$u,v \in X$, it holds that $\E[(Y_u-Y_v)^2] \leq \E[(Z_u-Z_v)^2]$, then $\E[\sup_x Y_x] \leq
\E[\sup_x Z_x]$.

Given the above, it is natural to wonder what properties of the metric space $X$
allow us to obtain upper and lower bounds on $\E[\sup_{x \in X} Z_x]$? A first
intuitively relevant quantity is the diameter of $X$ defined by $\diam(X) \defeq \sup_{u,v \in X} d(u,v)$. For any $u,v \in X$, we have
the following simple lower bound:
\begin{equation}\label{eq:diam-bnd}
\E[\sup_{x} Z_x] \geq \E[\max \{Z_u, Z_v\}] = 
\E[\max \{Z_u-Z_v,0\}] + \E[Z_v] = \tfrac12\E [| Z_u - Z_v|]
= \frac{d(u,v)}{\sqrt{2\pi}}.
\end{equation}
Here, we use that $\E[Z_v] = 0$, and that $Z_u - Z_v$ is  Gaussian with variance $d(u,v)^2$.
Thus $\E[\sup_x Z_x] \geq \diam(X)/\sqrt{2\pi}$.

Instead of looking at two maximally separated points, one
might expect to get stronger lower bounds using a large set of
well-separated points in $X$. Such an inequality was given by
Sudakov~\cite{Sudakov71}, who showed that 
\[
\max_{r > 0} r \sqrt{\log N_X(r)} \lesssim \E[\sup_x Z_x],      
\]
where $N_X(r) \defeq \min \{|S| \mid S \subseteq X, \forall x \in X, \min_{s \in S}
d(x,s) \leq r\}$ is the minimum size of an $r$-net of $X$. This is in fact a
direct consequence of Sudakov's comparison theorem. Precisely, the restriction of the
process $\{Z_x\}_{x \in X}$ to a suitable $r$-net $S$, chosen greedily so that every two points in
$S$ are at distance at least $r$, majorizes the maximum of $|S| \geq N_X(r)$
independent Gaussians with standard deviation $r/\sqrt{2}$, where a standard
computation then yields the left-hand side. 

On the upper bound side, Dudley~\cite{Dudley67} proved that the covering
numbers can in fact be \emph{chained} together to upper bound the supremum: 
\begin{equation}
\label{eq:dudley}
\E[\sup_x Z_x] \lesssim \int_0^\infty \sqrt{\log N_X(\eps)} d\eps.
\end{equation}
Note that the integral can be restricted to the range $\eps \in (0,\diam(X)]$, since
$\log N_X(\diam(X)) = \log 1 = 0$. Dudley's proof of this inequality was extremely
influential and showed the power of combining simple tail bounds on pairs of
variables $Z_u-Z_v$ to get a global bound on the supremum. In particular, the
main inequality used in Dudley's proof is the standard Gaussian tail bound: for
$u,v \in X$, and for any $s > 0$  
\begin{equation}
\label{eq:gauss-tail}
\Pr[|Z_u-Z_v| \geq d(u,v) \cdot s] \leq 2 e^{-s^2/2}.
\end{equation}
The strategy of combining the above inequalities to control the maximum of a
process is what is now called chaining.

\paragraph{\bf Basics of Chaining.} The concept of chaining is central to this
paper, so we explain the basic mechanics here. As it will be more convenient for
the exposition, we will more directly work with symmetric version of the supremum
\[
\sup_{x_1,x_2 \in X} Z_{x_1}-Z_{x_2}
\]
which is always non-negative. Note that since $(Z_x)_{x \in X}$ and $(-Z_x)_{x
\in X}$ are identically distributed, 
\[
\E\left[\sup_{x_1,x_2 \in X} Z_{x_1}-Z_{x_2}\right] = \E\left[\sup_{x \in X} Z_x\right] + \E\left[\sup_{x
\in X} -Z_x\right] = 2\E\left[\sup_{x \in X} Z_x\right], 
\]
and thus the expected supremum is the same after dividing by $2$.

From here, instead of bounding the expectation, we focus on upper bounding the
median of $\sup_{x_1,x_2 \in X} Z_{x_1}-Z_{x_2}$, which is known to be within a
constant factor of the expectation. Precisely, we seek to compute a number $M >
0$ such that $\Pr[\sup_{x_1,x_2 \in X} Z_{x_1}-Z_{x_2} \geq M] \leq 1/2$. To
arrive at such bounds, we define the notion of a \emph{chaining tree}.

\begin{definition}[Chaining Tree]
\label{def:chaining-tree}
A (Gaussian) chaining tree $\CCT$ for a finite metric space $(X,d)$ is a rooted
spanning tree on $X$, with root node $w \in X$, together with probability labels
$p_e \in (0,1/2)$, for each edge $e \in E[\CCT]$. The edge probabilities are
required to satisfy $\sum_{e \in E[\CCT]} p_e \leq 1/2$. For each edge $\{u,v\} =
e \in E[\CCT]$, we define the induced edge length $l_e \defeq l_e(p_e,e)$ to satisfy 
\begin{equation}
\label{eq:gauss-edge}
\Pr_{{Z \in \CN(0,d(u,v)^2)}} [|Z| \geq l_e] = p_e.
\end{equation}
For each $x \in X$, let $\CP_x$ denote the unique path from $x$ to
the root $w$ in $\CCT$. We define the value of $\CCT$ to be
\begin{equation}
\label{eq:chain-value}
\val(\CCT) = \max_{x \in X} \sum_{e \in \CP_x} l_e.
\end{equation}
\end{definition}

For a Gaussian process $(Z_x)_{x \in X}$, where $d$ is the induced metric as
in~\eqref{eq:gauss-metric}, for any chaining tree $\CCT$ on $X$, we now show that 
\begin{equation}
\label{eq:chain-bound}
\Pr\left[\sup_{x_1,x_2} Z_{x_1}-Z_{x_2} \geq 2\cdot \val(\CCT)\right] \leq 1/2.
\end{equation}

By construction, for any edge $\{u,v\} \in E[\CCT]$ we first note that
\[
\Pr[|Z_u-Z_v| \geq l_e] = p_e,
\] 
recalling that $Z_u-Z_v$ is distributed as $\CN(0,d(u,v)^2)$. Since $\sum_{e \in
\CCT} p_e \leq 1/2$, by the union bound the event $\CE$ defined as 
``$|Z_u-Z_v| \leq l_{u,v}$, $\forall \{u,v\} \in E[\CCT]$'', holds with probability
at least $1/2$. For $x \in X$, let us now define $\CP_x$ to be the unique path
from the root $w$ to $x$ in $\CCT$.

Conditioning on the event $\CE$, by the triangle inequality
\begin{equation}
\label{eq:tree-path}
|Z_x-Z_w| \leq \sum_{\{u,v\} \in \CP_x} |Z_u-Z_v| \leq \sum_{e \in \CP_x} l_e.
\end{equation}
Applying the triangle inequality again, we have that
\[
\sup_{x_1,x_2} Z_{x_1}-Z_{x_2} \leq 2 \sup_{x \in X} |Z_x-Z_w| \leq 2 \cdot \val(\CCT).
\]
The bound~\eqref{eq:chain-bound} now follows since the above occurs with
probability at least $1/2$. 

To work with such chaining trees, it is important to have easy approximations of
the edge lengths used above. For $e = \{u,v\} \in E[\CCT]$ and $p_e \in (0,1/2]$,
it is well known that
\begin{equation}
\label{eq:gauss-edge-apx}
l_e \asymp d(u,v)\sqrt{\log(1/p_e)}.
\end{equation}

Note that by the standard Gaussian tail bounds \eqref{eq:gauss-tail}, for any $p
\in (0,1)$, we have the upper bound, $l_e \leq d(u,v)\sqrt{2\log(2/p_e)}$.

To relate to earlier lower bounds, it is instructive to see that $\val(\CCT)
\gtrsim \diam(X)$, for any chaining tree. Firstly, since each $p_{u,v} \in (0,1/2]$,
by~\eqref{eq:gauss-edge-apx}, it follows that $l_{u,v} \gtrsim d(u,v)$. From here, for any pair of
points $u,v \in X$, by the triangle inequality $2\cdot \val(\CCT)$ pays for the
cost of going from $u$ to the root $w$ and from $w$ to $v$, yielding the desired
upper bound on the diameter.

\paragraph{\bf Chaining beyond Gaussians.} Importantly, in the above framework,
the only element specific to Gaussian processes is the edge length
function~\eqref{eq:gauss-edge}. As our results will apply to this more general setting, we
explain how chaining can straightforwardly be adapted to work with processes
satisfying appropriate tail bounds. 

Let us examine a jointly distributed sequence of random variables $(Z_x)_{x \in
X}$ indexed by a metric space $(X,d)$. To constrain the process we will
make the following assumptions on the tails. Let $f: \R_+ \rightarrow \R_+$ be a
continuous and non-increasing probability density function on the non-negative
reals and let $F(s) = \int_s^\infty f(t)dt$ denote the complementary cumulative distribution
function. Then, for all $x_1,x_2 \in X$ and $s \geq 0$, we assume that      
\begin{equation}
\label{eq:tail-assumption}
\Pr[|Z_{x_1}-Z_{x_2}| \geq d(x_1,x_2) \cdot s] \leq F(s).
\end{equation}

\begin{definition}[Chaining Functional]
\label{def:chain-func}
We define the \emph{chaining functional} induced by $f$ to be $h(p) \defeq
h_f(p) = F^{-1}(p)$, for $p \in (0,1]$, which is well-defined since $f$ is
non-increasing. Note that $h(1) = 0$ and that $h(p)$ is strictly decreasing on
$(0,1]$. We say that $h$ is of \emph{log-concave type} if the density $f$ is
log-concave.
\end{definition}

A property that we make crucial use of is that $h(p)$ is, in fact, a convex
function of $p \in (0,1]$. To see this, for $p \in (0,1)$, a direct computation
yields that $h'(p) = 1/F'(h(p)) = -1/f(h(p))$, where the derivative is
well-defined since $f$ is continuous and non-decreasing. Since $f(s)$ is
non-increasing and $h(p)$ is strictly decreasing, $h'(p)$ is non-decreasing and
hence, $h$ is convex. Throughout the rest of the paper, we will mainly be
interested in chaining functionals of log-concave type.

To apply the chaining framework to the process $(Z_x)_{x \in X}$, we use a
chaining tree $\CCT$ exactly as in \defref{def:chaining-tree} except that we now
compute the edge lengths according to the chaining functional $h$. Specifically, for
$e = \{u,v\} \in E[\CCT]$ and probability $p_e \in (0,1)$, we define 
\begin{equation}
\label{eq:gen-edge}
l_e \defeq l_e(e,p_e) \defeq d(u,v) \cdot h(p_e).
\end{equation}
We now define $\val_h(\CCT)$ exactly as in~\eqref{eq:chain-value}, using $h$ to
compute the edge lengths. 

With this setup, with an identical proof to the previous section, we have
the inequality
\[
\Pr\left[\sup_{x_1,x_2 \in X} Z_{x_1}-Z_{x_2} \geq 2\cdot \val_{h}(\CCT)\right] \leq \tfrac12.
\]

As in the Gaussian setup, it is useful to keep in mind what the ``trivial''
diameter lower bound on $\valch(\CCT)$ should be. Since the edge probability
$p_e \in (0,1/2]$, for $e = \{u,v\} \in E[\CCT]$, we have that $l_e \geq
d(u,v) \cdot h(1/2)$. Therefore, for any $u,v \in X$, by the triangle inequality, the
cost of the paths from $u$ or $v$ to the root $w$ is at least $d(u,v) \cdot h(1/2)$ for any chaining tree $\CT$. In
particular, for any chaining tree $\CCT$, we derive the lower bound
\begin{equation}
\label{eq:trivial-lb}
2\cdot \valch(\CCT) \geq \diam(X) \cdot h(1/2).  
\end{equation}

It is important to note that the Gaussian chaining setup is indeed a special
case of the above. Precisely, in that setup the edge lengths are $l_{u,v} \defeq
d(u,v)\cdot h_f(p_{u,v})$, where $f(s) = \sqrt{\frac{2}{\pi}}
e^{-s^2/2}$ is the density of the absolute value of the standard Gaussian.

\paragraph{\bf Dudley's Construction.} To gain intuition about how to apply
the chaining framework, we now explain how to build and analyze the chaining tree
used in Dudley's inequality. For simplicity of notation, let us assume that the
diameter $\diam(X)=1$. For each $k \geq 0$, let $\mathcal{N}_k$ denote a
$2^{-k}$-net of $X$ of minimum size, i.e., satisfying $|\mathcal{N}_k| =
N_X(2^{-k})$. By our diameter assumption, $\mathcal{N}_0 = \{w\}$ is clearly a
single point, which gives the root of the tree $\CCT$. From here, we
construct the tree by induction on $k \geq 1$. At iteration $k \geq 1$, we
attach each element of $\CN_k$ not already in $\CCT$ to a closest point in $\CCT$.
From here, we set the edge probability $p_{u,v} = p_k \defeq
2^{-(k+1)}/|\mathcal{N}_k|$ and let $l_{u,v} > 0$ be minimal subject to
$\Pr[|Z_u-Z_v| \geq l_{u,v}] \leq p_k$. This completes the construction.

To analyze the tree $\CCT$, we make the following observations. Firstly,
the number of edges we add to the tree at iteration $k$ is at most
$|\mathcal{N}_k|$. Therefore, the total probability sum is at most
$\sum_{k=1}^\infty |\mathcal{N}_k|\cdot p_k = 1/2$, and hence $\CCT$ is a valid
chaining tree. Second, any edge $\{u,v\}$ added during iteration $k$ satisfies
$d(u,v) \leq 2^{-k+1}$. Consequently, by the Gaussian tail bound~\eqref{eq:gauss-tail}, 
\[
l_{u,v} \lesssim d(u,v)\sqrt{\log(1/p_k)} \lesssim 2^{-k+1}\left(\sqrt{\log
N_X(2^{-k})} + \sqrt{k+1}\right). 
\]
In particular, the value of $\CCT$ satisfies
\[
\val(\CCT) \lesssim \sum_{k=1}^\infty 2^{-k+1}\left(\sqrt{\log
N_X(2^{-k})}+\sqrt{k+1}\right) \lesssim 1 + \sum_{k=1}^\infty 2^{-k+1}
\sqrt{\log(N_X(2^{-k}))}.
\]
One can now easily show that the above expression is upper bounded
by~\eqref{eq:dudley} by discretizing the range of the integral along powers of
$2$ (recalling that $\diam(X)=1$). 

\paragraph{\bf The Method of Majorizing Measures.} Given the above, it is natural
to wonder how one might construct an \emph{optimal} chaining tree for a given
process $(Z_x)_{x \in X}$. A principal goal of this paper will be to
give efficient constructions for such trees. At first sight, this may seem like a daunting task,
as one must somehow simultaneously optimize over all spanning trees and edge
probabilities. Nevertheless, a major step towards this goal was achieved for Gaussian
processes by Fernique~\cite{Fernique75}, who proved the following remarkable theorem:   
\begin{equation}
\label{eq:maj-meas-ub}
\E\left[\sup_{x \in X} Z_x\right] \lesssim \primal_2(X) \defeq \inf_{\mu} \sup_{x \in X} \int_0^\infty
g(\mu(B(x,\eps))) d\eps.
\end{equation}
Some definitions are in order. Firstly, the infimum over $\mu$ is taken over all
probability measures on $X$. Secondly, $g(p) \defeq \sqrt{\log(1/p)}$, for $p \in
[0,1]$ corresponds to (an approximation of) the Gaussian edge length function
in~\eqref{eq:gauss-edge}. Lastly, $B(x,\eps) = \{y \in X: d(x,y) \leq r\}$ is the
metric ball of radius $r$ around $x$, where $d$ is the canonical metric induced
by the Gaussian process.

Importantly, the natural analogue of $\primal_2$ for the general setup
in~\eqref{eq:tail-assumption} also yields upper bounds on the expected supremum,
provided the tails of $f$ decay sufficiently quickly. In particular, for
$(Z_x)_{x \in X}$ satisfying~\eqref{eq:tail-assumption}, for any ``nice enough''
$f$, we have that 
\begin{equation}
\label{eq:gen-maj-meas-ub}
\E\left[\sup_{x_1,x_2 \in X} Z_{x_1}-Z_{x_2}\right] \lesssim \primal_h(X) \defeq \inf_{\mu} \sup_{x \in X} \int_0^\infty
h(\mu(B(x,\eps))) d\eps,
\end{equation}
where $h$ is as in~\eqref{eq:gen-edge}. Note that since $h(1) = 0$, one can
truncate the range of the integral to $r \in [0,\diam(X)]$. Very general results
of the above type can be found in~\cite{Talagrand90,Bednorz06}. We note that the
requirements of the process in these works are parametrized is a somewhat
different way in terms of Orlicz norms. In this work, we will focus on the
setting where the chaining functional $h$ is of log-concave type (where the tail
density $f$ is log-concave), where these different parametrizations are
equivalent. Prototypical examples in this class are the tail densities of
\emph{exponential type}, which are proportional to $e^{-x^q}$, $x \geq 0$, for
$q \geq 1$, and where $h(p) \asymp \ln^{1/q}(1/p)$ for $p \in (0,1/2)$.

Given that any probability measure $\mu$ can be used to upper bound the expected
supremum, Fernique dubbed the above technique the method of \emph{majorizing
measures}. It is worthwhile to note that Fernique did not prove
inequality~\eqref{eq:maj-meas-ub} via chaining. He relied instead on a more
general technique, which first proves a generic concentration inequality for
real valued functions on the metric space, and recovers the desired inequality
by averaging over the ensemble of functions induced by the process. The
fact that one can recover the same bound via chaining for Gaussian processes
would only be proved later, at first implicitly in Talagrand~\cite{Talagrand87},
and explicitly in~\cite{T01}, where the latter work also covered processes of
exponential type mentioned above. 

As noted above, the quantity $\primal_2(X)$ and more generally $\primal_h(X)$
(for $h$ of log-concave type), rather miraculously models the value of the best chaining
tree as a continuous optimization problem. 
As majorizing measures may seem like rather
opaque objects at first sight, we believe it is instructive to note
that from a chaining tree $\CCT$, one can construct a measure $\mu$ whose
value in \eqref{eq:maj-meas-ub} is at most $3\cdot \val_h(\CCT)$.
The construction is simple: set $\mu_w = 1/2$ on the root $w$, and for each $e=\{u,v\} \in E[\CCT]$ (with $v$ closer to the root than $u$),
set $\mu_u = p_e$.
The details of the comparison can be found in \lref{lem:cctbound} in the appendix.

From the above discussion, we see that the majorizing measures are indeed powerful tools
for upper bounding suprema. Given this, together with the many tools for 
\emph{lower bounding} Gaussian suprema (which are not available in general),
Fernique~\cite{Fernique75} conjectured that majorizing measures should fully
characterize the expected supremum of Gaussian processes. This conjecture was
verified in the ground-breaking work of Talagrand~\cite{Talagrand87}, which is
now called the majorizing measures theorem: 

\begin{theorem}[Fernique-Talagrand \cite{Fernique75,Talagrand87}] \label{thm:fernique-talagrand}
For any centered Gaussian process $(Z_x)_{x \in X}$ over the metric space $X=(X,d)$,
where $d$ is the canonical metric induced by the process, we have 
\[ 
    \BE\Bigl[\,\sup_{x \in X} Z_x\,\Bigr] \asymp \primal_2(X).
\]
\end{theorem}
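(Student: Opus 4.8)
The plan is to establish the two directions separately, since the upper bound $\E[\sup_x Z_x] \lesssim \primal_2(X)$ is precisely Fernique's inequality~\eqref{eq:maj-meas-ub}, which we may assume, and so the entire content lies in the lower bound $\primal_2(X) \lesssim \E[\sup_x Z_x]$. The strategy for the lower bound is the one advertised in the abstract: cast the computation of the best majorizing measure as a convex program, and then extract a chaining tree (or rather, its dual analogue, a packing tree) from the optimal primal and dual solutions by rounding. Concretely, the steps are as follows.

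\textbf{Step 1: Convex programming formulation.} First I would write $\primal_h(X) = \inf_\mu \sup_{x} \int_0^{\diam(X)} h(\mu(B(x,\eps)))\, d\eps$ as a convex minimization problem in the variables $\mu = (\mu_y)_{y \in X}$ (a point in the simplex), using that $h$ is convex (as established after \defref{def:chain-func}) and that $\mu \mapsto \mu(B(x,\eps))$ is linear, so that each integrand is a convex function of $\mu$; discretizing the $\eps$-integral over the $\binom{|X|}{2}$ relevant radii keeps it a finite convex program. Taking the Lagrangian dual gives a maximization problem whose feasible solutions are, after suitable interpretation, ``fractional packing trees'' — weighted collections of well-separated balls at geometric scales, analogous to Talagrand's partition-tree lower bound witnesses (the packing trees in van Handel's terminology~\cite{vH16}).

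\textbf{Step 2: Rounding the dual to an integral packing/chaining tree.} The main work is to take an optimal dual solution and round it, in a sequence of steps, into an honest combinatorial packing tree whose value is within a constant factor of $\primal_2(X)$ — and then invoke the (available for Gaussians) lower bound machinery, i.e. Sudakov minoration applied recursively down the tree, to conclude that $\E[\sup_x Z_x] \gtrsim \val$(packing tree) $\gtrsim \primal_2(X)$. This is where the Gaussian-specific input enters: the tail \emph{lower} bound matching~\eqref{eq:diam-bnd}, used at every node to charge the separation of the children against the supremum of the subprocess. The rounding has to control the loss at each scale while preserving the separation and measure-mass bookkeeping that the dual encodes; here the log-concavity/regularity of $g(p) = \sqrt{\log(1/p)}$ — specifically that $g(p^2) \asymp g(p)$ — is what makes the geometric-scale decomposition lose only a constant.

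\textbf{The main obstacle} I expect is precisely Step 2: converting the fractional dual certificate into a genuine tree of well-separated subsets at each scale without losing more than a constant factor, which is the algorithmic analogue of the hard direction of the classical majorizing measures theorem. The delicate point is the interplay between the continuous measure $\mu$ optimally ``spreading its mass'' and the discrete requirement that a packing tree picks genuinely disjoint balls; complementary slackness between the primal $\mu$ and the dual packing weights should tell us that wherever $\mu$ puts little mass, the dual can afford a large well-separated family, which is exactly the trade-off Sudakov's bound exploits. Making this quantitative — and uniform over the recursion depth, which can be $\log(\diam(X)/\min\text{-dist})$ — while only using convexity and the one Gaussian comparison inequality, is the crux. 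The remaining steps (truncating integrals at $\diam(X)$, discretizing radii to powers of two, and the standard equivalences~\eqref{eq:gauss-edge-apx} between $h_f$ and $g$) are routine and contribute only absolute constants.
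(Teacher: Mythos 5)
Your overall architecture matches the paper's: the upper bound is Fernique's inequality, and the hard direction is obtained by producing a packing tree of value $\gtrsim \primal_2(X)$ from (near-)optimal solutions to the convex program and then running the Sudakov/super-Sudakov recursion down that tree. That much is exactly the paper's plan. But your Step 2 — which you yourself flag as "the crux" — is left entirely unexecuted, and the one concrete claim you make about it is not right as stated. The Lagrangian dual of the discretized program does \emph{not} have "fractional packing trees" as its feasible solutions: the separation and diameter constraints of a packing tree are nowhere present in the primal, so they cannot emerge from convex duality alone. In the paper, the dual of the saddle-point formulation \eqref{eq:saddle-point} is simply another probability measure $\nu$ on $X$, and $\realdual_h(X)$ is a max-min over \emph{pairs} of measures whose inner minimization is itself a nontrivial convex program. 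No complementary-slackness argument is used or, as far as I can see, available in the form you describe.

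The missing content is the chain of reductions that turns that dual measure into something tree-like. The paper needs: (i) \lref{lem:guess-mu}, showing one may set $\mu=\nu$ in the inner minimization at the cost of a factor $2$ plus $\diam(X)/e$ — proved via a Hall-type decomposition coming from principal sequences of transversal matroids (\pref{clm:match}), and using $-1\le ah'(a)\le 0$; this yields the entropic dual \eqref{eq:entropic-dual}; (ii) \lref{lem:ent-to-simple}, a greedy deletion argument converting the $\nu$-average into a minimum over the support of a conditioned measure $\nu_S$, yielding the simplified dual \eqref{eq:simplified-dual}; and (iii) \thmref{thm:sep}, the greedy separated ball partitioning that rounds $\nu_S$ to an actual $\tfrac{1}{10}$-packing tree, with a two-case charging argument (Claims \ref{clm:sep1} and \ref{clm:sep2}) controlling the loss across the recursion depth — precisely the uniformity-over-depth issue you identify but do not resolve. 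Without (i)–(iii), or some substitute for them, your proposal reduces to "round the dual and it works," which is the statement to be proved rather than a proof. So while the high-level route is the paper's, there is a genuine gap: the entire dual-simplification and rounding machinery, which is where all the difficulty of the hard direction lives.
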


The original proof of the majorizing measure theorem~\cite{Talagrand87} was
considered notoriously difficult. Due to its importance in the theory of
stochastic processes, many simpler as well as different proofs were
found~\cite{Talagrand92,T96,T01,MN13,B12,vH16}, often by Talagrand himself. 

As stated at the beginning of the introduction, the goal of this paper is to
give an alternative constructive proof of this theorem using a convex
optimization approach. In particular, our starting point is the simple
observation that $\primal_h(X)$ is in fact a convex program, which follows
directly from the convexity of $h$\footnote{The formulation $\primal_2(X)$ is
``essentially convex''. This is because $g(p)$ is only convex on the interval
$[0,1/\sqrt{e}]$, which is easily remedied. We note this non-convexity is
principally due to $g(p)$ being a poor approximation of~\eqref{eq:gauss-edge}
for $p \in [1/\sqrt{e},1]$.}. While simple (and most certainly known to
experts), we have not seen this observation leveraged in earlier proofs. In our
context, convexity will allow for near-optimal solutions to $\primal_h(X)$ to be
efficiently computed using off-the-shelf methods. Furthermore, convex duality
will allow us to inspect the structure of solutions to natural dual program(s)
for $\primal_h(X)$, enabling us to reason about lower bounds. Our proof will
operate entirely at the level of the metric space, and will produce a natural
combinatorial variant of an optimal primal-dual solution pair for $\primal_h(X)$,
namely a chaining tree and packing tree (defined shortly). These solutions
will in fact be obtained by ``rounding'' solutions to the corresponding
continuous programs. Specializing to the Gaussian case, we recover the
majorizing measure theorem by an easy comparison between the value of the
Gaussian supremum and the value of the combinatorial solutions (which are tailor
made for this purpose). This strategy has the benefit of clearly separating the
role of the metric space and the role of the Gaussian process, which are often
intertwined in difficult to disentangle ways in many proofs.

We now review some of the key ideas in the known proofs of the Majorizing measures theorem, which will be important for our approach as well. In particular, we will require appropriate dual analogues
to chaining trees. 

\paragraph{\bf Primal Proof Strategies.}  
Given what we have seen so far, a main missing ingredient is a stronger form of
lower bound for the value of the Gaussian supremum (noting that chaining already
provides the upper bound). For this purpose, we examine the natural functional
induced by the process on subsets of $X$, defining  
\begin{equation}
\label{eq:gauss-func}
G(S) \defeq \E\left[\sup_{x \in S} Z_x\right], ~~\forall S \subseteq X. 
\end{equation}
The following functional inequality, named the ``super-Sudakov'' inequality
in~\cite{vH16}, was proven in~\cite{Talagrand92}: there exists $\gamma \in
(0,1)$, such that given an $r$-separated (non-empty) subsets $A_1,\dots,A_N
\subseteq S$, i.e., satisfying $d(A_i,A_j) \geq r$, $\forall i \neq j$,  and $\diam(A_i) \leq \gamma r$, $\forall i \in [N]$, then
\begin{equation}
\label{eq:super-sudakov}
G(S) \geq \gamma \cdot  r \cdot g(1/N) + \min_{i \in [N]} G(A_i),
\end{equation}
where $g(x) = \sqrt{\log 1/x}$ for $x \in [0,1]$, as before.

In~\cite{Talagrand92}, Talagrand gave a construction which takes a functional
$G$ on $X$ satisfying~\eqref{eq:super-sudakov}, and produces (a variant of) a
chaining tree $\CCT$ satisfying $\val(\CCT) \lesssim G(X)$. Talagrand's
construction is based on a recursive partitioning scheme, where the partitions
roughly correspond to subtrees, which greedily chooses metric balls of large $G$
value to construct the partition. We note that this construction comes in
different flavors, each yielding more structured versions of chaining trees
(i.e.~labelled nets~\cite{Talagrand92} and admissible nets~\cite{T96}). By
instantiating $G$ to be the functional given by~\eqref{eq:gauss-func} immediately yields
\thmref{thm:fernique-talagrand}. While Talagrand's construction was
certainly algorithmic, the Gaussian functional $G$ is not easy to compute (at
least deterministically). As mentioned previously, in~\cite{T01}, Talagrand also
gave another procedure that directly converts measures to chaining trees. Note
that this yields a good chaining tree from a good measure, but by itself does
not yield~\thmref{thm:fernique-talagrand}. 

\paragraph{\bf Dual Proof Strategies.}

One reason the ``difficult'' Gaussian functional $G$ was required to
prove \thmref{thm:fernique-talagrand} is that there was no simple dual object to
compare to that certifies a lower bound. From the convex optimization
perspective, this should morally correspond to a solution to the dual of
$\primal_2(X)$ (or $\primal_h(X)$). Such an object, called a \emph{packing
tree} in the terminology of~\cite{vH16}, was in fact developed in Talagrand's
original proof~\cite{Talagrand87} for the Gaussian case, and extended to general
chaining functionals in~\cite{LT91}.  

\begin{definition}[Packing Tree] 
\label{def:separated-tree}
Let $\alpha \in (0,\frac{1}{10}]$. An $\alpha$-packing tree $\CT$ on a finite metric space
$(X,d)$ is a rooted tree on subsets of $X$, with root node $W \subseteq X$,
together with a labelling $\chi: \CT \rightarrow \Z_{\ge 0}$. We enforce that every leaf
node $V \in \CT$ is a singleton, i.e., $V=\{x\}$ for some $x \in X$. We denote
${\rm leaf}(\CT) \subseteq X$ to be the union of all leaf nodes of $\CT$. Every
node $V \in \CT$ has a (possibly empty) set of children $C_1,\dots,C_k \subseteq V$
which are pairwise disjoint. We let $\deg(V) \defeq k$ denote the \emph{number of children}
of $V$. We enforce the follow metric properties on $\CT$: 
\begin{enumerate}
\item \label{eq:decrease} For any child $C$ of $V \in \CT$, we have that $\diam(C) \le \alpha^{\chi(V) + 1} \cdot\diam(X)$. 
\item \label{eq:separation} For $V \in \CT$ and distinct children
$C_1,C_2$ of $V$, we have $d(C_1,C_2) \ge \frac{1}{10}\alpha^{\chi(V)} \cdot \diam(X)$. 
\end{enumerate}

The value of an $\alpha$-packing tree $\CT$ with respect to a chaining functional
$h$ is defined as 
\begin{equation}
\label{eq:sep-tree-value}
\valsep(\CT) \defeq ~\inf_{x \in {\rm leaf}(\CT)} \sum_{V
\in \CP_x \setminus \{x\}}
\alpha^{\chi(V)} \cdot \diam(X) \cdot h(1/\deg(V)),
\end{equation}
where $\CP_x$ is the unique path from the root $W$ to the leaf $\{x\}$. We use
the shorthand $\val_2(\CT)$ to denote the value with respect to the Gaussian
functional $g$.  
\end{definition}

We remark that we do not count the edge going to the parent in $\deg(V)$ mostly for notational convenience --- in this case, nodes with a sole child do not contribute to the value of the packing tree.  Also, there is quite a bit of flexibility in the parameters of the
packing tree, which are chosen above for convenience. Packing trees are objects
that allow us to chain lower bounds together in analogy to upper bounds via
chaining trees. The combinatorial structure of a packing tree is more
constrained than that of a chaining tree however, and their construction (at
least more from the perspective of the analysis) is more delicate.

In the Gaussian setting, an $\alpha$-packing tree $\CT$ is perfectly tailored
for combining the  ``super-Sudakov" inequalities given by~\eqref{eq:super-sudakov}.
In particular, for $\alpha = 1/(2\gamma)$, a  direct proof by induction starting from the leaves of the tree certifies that $G(X) \gtrsim \val_2(\CT)$ (see Theorem 6.36 in \cite{vH16}). This was in
fact first established in~\cite{Talagrand87} using Slepian's lemma instead
of~\eqref{eq:super-sudakov}. Independently of any process however, they also
directly serve as combinatorial lower bounds for $\primal_h(X)$.

\begin{restatable}{lemma}{weakdual}
\label{lem:weak-comb-dual}
Let $\alpha \in (0, \frac{1}{10}]$. For a finite metric space $(X,d)$, an $\alpha$-packing tree $\CT$ on $X$, and any chaining functional $h$, we have 
\[ \primal_h(X) \geq \frac{1}{2}(1-\alpha) \cdot \valsep(\CT).  \]
\end{restatable}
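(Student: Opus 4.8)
The plan is to exhibit, for any probability measure $\mu$ on $X$, a single point $x \in \mathrm{leaf}(\CT)$ whose integral $\int_0^\infty h(\mu(B(x,\eps)))\,d\eps$ is at least $\frac{1}{2}(1-\alpha)\valsep(\CT)$; taking the infimum over $\mu$ then gives the claim. The point will be produced by a root-to-leaf walk in $\CT$ that greedily follows the child of smallest $\mu$-mass.

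First I would set up the walk. Start at the root $W$. At a node $V$ with children $C_1,\dots,C_k$ (where $k = \deg(V)$), the children are pairwise disjoint subsets of $V \subseteq X$, so $\sum_i \mu(C_i) \le 1$, hence some child $C_i$ has $\mu(C_i) \le 1/k = 1/\deg(V)$. Move to that child, and continue until reaching a leaf $\{x\}$. This defines a path $\CP_x$ from $W$ to $x$, and along it every non-leaf node $V$ satisfies $\mu(C) \le 1/\deg(V)$ for the next node $C$ on the path. (Nodes with a single child contribute a factor $1/1$, i.e. $h(1)=0$, consistent with the remark that they do not contribute to $\valsep$.)

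Next I would lower bound the integral for this $x$ radius-band by radius-band, one band per node $V$ on $\CP_x$. Fix a non-leaf $V \in \CP_x$ with the next node $C$ on the path; write $r_V \defeq \alpha^{\chi(V)}\diam(X)$, so property~\ref{eq:decrease} gives $\diam(C) \le \alpha r_V$. The key geometric observation is: for every $\eps$ in the band $(\diam(C)/2,\, r_V/2\,]$ — wait, more precisely I want a band of length $(1-\alpha)r_V/2$ — one can show that $B(x,\eps)$ still avoids all \emph{sibling} subtrees' leaf sets, so that $\mu(B(x,\eps))$ is controlled by $\mu$ of the union of $C$'s own relevant region. Concretely, for $\eps$ up to roughly $\tfrac12 d(C, C') $ for a sibling $C'$, which by property~\ref{eq:separation} is at least $\tfrac{1}{20}\alpha^{\chi(V)}\diam(X)$, the ball around $x$ cannot reach into sibling $C'$. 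The cleanest route: restrict attention to the band $\eps \in \bigl(\tfrac12\alpha r_V,\ \tfrac12 r_V\bigr]$, of length $\tfrac12(1-\alpha)r_V$, and argue on this band $B(x,\eps)$ is contained in $C$ together with points outside $V$ that are still within distance $\eps$ — but since we only need an \emph{upper} bound on $\mu(B(x,\eps))$ to \emph{lower} bound $h(\mu(B(x,\eps)))$, and $h$ is decreasing, I would instead compare against a \emph{larger} set. The honest move is to use that on this band $\mu(B(x,\eps)) \le \mu(C')$ is \emph{false} in general; so I would instead bound $\mu(B(x,\eps))$ from above by $1$ trivially in the worst case and by $\mu(C)$-type quantities via a telescoping/nesting argument across consecutive nodes on the path — this is exactly the delicate bookkeeping and is the main obstacle (see below). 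Granting the band estimate $\mu(B(x,\eps)) \le 1/\deg(V)$ for $\eps$ in an interval of length $\ge \tfrac12(1-\alpha)r_V$, monotonicity of $h$ gives
\[
\int_0^\infty h(\mu(B(x,\eps)))\,d\eps \;\ge\; \sum_{V \in \CP_x\setminus\{x\}} \tfrac12(1-\alpha)\,\alpha^{\chi(V)}\diam(X)\, h(1/\deg(V)) \;\ge\; \tfrac12(1-\alpha)\,\valsep(\CT),
\]
where the last inequality uses the definition~\eqref{eq:sep-tree-value} (an infimum over leaves, and we have one specific leaf $x$), provided the radius bands for distinct $V$ on the path are disjoint. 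Disjointness follows because the bands for $V$ at path-depth with label $\chi(V)$ live near scale $\alpha^{\chi(V)}\diam(X)$ and the labels are non-decreasing down any path — actually one must check $\chi$ is increasing along paths, which follows from property~\ref{eq:decrease} forcing diameters to shrink; I would insert the short argument that consecutive labels on a path strictly increase (or handle ties by noting the bands can be taken within $(\alpha^{\chi(V)+1}\diam(X)/2,\ \alpha^{\chi(V)}\diam(X)/2]$, which telescopes cleanly across strictly increasing $\chi$).

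The hard part will be making the band estimate $\mu(B(x,\eps)) \le 1/\deg(V)$ rigorous: a ball around $x$ at scale $\eps \approx \alpha^{\chi(V)}\diam(X)$ does not literally sit inside the child $C$, so one needs to track which nodes of $\CT$ the ball can intersect. The resolution is that along the greedy path, for the specific child $C$ we chose we only know $\mu(C) \le 1/\deg(V)$; to turn this into a statement about $\mu(B(x,\eps))$ one must argue the ball of radius $\eps \le \tfrac12 d(C,C')$ around $x \in C$ (more precisely $x$ lies in the leaf inside $C$, whose diameter from $C$ down is $\le \alpha\cdot\text{(current scale)}$) meets no sibling $C'$ of $C$, by the separation property~\ref{eq:separation} and the triangle inequality, and meets nothing outside $V$ that $\mu$ hasn't already been forced to allocate — here I would use a clean accounting where each radius band is "charged" to exactly one node and the nesting $\{x\}=V_\ell \subset \cdots \subset V_1 = W$ of the path guarantees $B(x,\eps)$ at the $V$-scale is contained in $V$ itself, whose children's masses we have controlled. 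Packaging this containment-plus-separation argument into a self-contained lemma, with the constants from the definition chosen (as the authors note) precisely to make it go through, is the crux; everything else is the telescoping sum above.
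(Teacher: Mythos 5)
Your skeleton (pigeonhole down the tree, one radius band per node on the root--leaf path, telescoping against $\valsep(\CT)$) is the same as the paper's, but the step you yourself flag as ``the crux'' --- the band estimate $\mu(B(x,\eps)) \le 1/\deg(V)$ --- is a genuine gap, and it cannot be closed the way you have set it up. The problem is that you apply the pigeonhole to the raw masses $\mu(C_1),\dots,\mu(C_k)$ of the children. Knowing $\mu(C)\le 1/\deg(V)$ says nothing about $\mu(B(x,\eps))$: the ball around $x\in C$ can contain points of $X$ that belong to no node of the tree at all, since the children of $V$ need not cover $V$, and $V$ itself can have arbitrary $\mu$-mass sitting just outside it. The separation property~\ref{eq:separation} only keeps siblings away from each other; it does not keep this ``unaffiliated'' mass away from $C$. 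Consequently no telescoping or nesting bookkeeping along the path can recover the estimate --- the quantity you need to control is simply not bounded by any of the quantities your greedy rule controls.

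The paper's fix is to change what you pigeonhole on: for each node $A$ define the fattened set $S(A) \defeq \bigcup_{y\in A} B\bigl(y, \tfrac12\alpha^{\chi(A)}\diam(X)\bigr)$. The separation property (together with the diameter bound and $\alpha\le \tfrac1{10}$) guarantees that the fattened sets of distinct children of $V$ are pairwise disjoint, so pigeonhole applied to $\mu(S(C_1)),\dots,\mu(S(C_k))$ yields a child $C$ with $\mu(S(C))\le 1/\deg(V)$; and now the containment you were missing is automatic, because $x\in V_j$ for every node $V_j$ on the chosen path implies $B(x,\eps)\subseteq S(V_j)$ for all $\eps\le\tfrac12\alpha^{\chi(V_j)}\diam(X)$, with no case analysis about siblings or outside mass. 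With that substitution, the rest of your argument (disjoint bands of length at least $\tfrac12(1-\alpha)\alpha^{\chi(V)}\diam(X)$, monotonicity of $h$, strict increase of $\chi$ down the path, and the final comparison with $\valsep(\CT)$) goes through essentially as you wrote it.
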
 
While known to experts, it is not so easy to find combinatorial proofs of the
above inequality, i.e.~not related to a process, in the literature (see for
example Exercise 6.12 in~\cite{vH16} or Lemma 3.7 in~\cite{DLP12}). We provide a proof in the appendix.

Talagrand's original proof of the majorizing measures theorem worked almost entirely on
the dual side. As generalized in~\cite{LT91}, the main work in the proof was in
fact to construct an $\alpha$-packing tree $\CT$ satisfying $\val_h(\CT) \gtrsim
\primal_h(\CT)$ (for $h$ of log-concave type). As for the primal side, the
construction is based on similar greedy ball (sub-)partitioning using an appropriate
functional $H$ on $X$ satisfying a so-called ``super-chaining'' inequality in
the terminology of~\cite{vH16}. Specifically, for any set $S \subseteq X$, and
a partition $S = \sqcup_{i=1}^N P_i$, $H$ satisfies
\begin{equation}
\label{eq:super-chaining}
H(S) \leq \max_{i \in [N]} \beta \cdot \diam(S)h(1/(i+1)) + H(P_i),
\end{equation}   
for some absolute constant $\beta > 0$. Interestingly, the functional $H$
used in~\cite{Talagrand87,LT91} was a variant of $\primal_h(X)$, which is
deterministically computable, and not the Gaussian functional in the case $h=g$
(though this works as well~\cite{vH16}). This construction was in fact leveraged
in~\cite{DLP12} to give a deterministic polynomial time dynamic programming
algorithm for computing a nearly optimal packing tree. 

\subsection{Our Results}

\subsubsection{A Constructive Min-Max Theorem}

The main result of this paper is the following constructive variant of the
combinatorial core of the majorizing measures theorem.

\begin{theorem} 
\label{thm:constructive}
Let $(X,d)$ be an $n$ point metric space, $h$ be a chaining functional of
log-concave type. Then there is a deterministic algorithm which computes a
chaining tree $\CCT^*$ and a $\frac1{10}$-packing tree $\CT^*$ satisfying
\[
\val_h(\CCT^*) \asymp \val_h(\CT^*),
\]
using $\tilde{O}(n^{\omega+1})$ arithmetic operations and evaluations of $h$ and
$h'$, where $\omega \leq 2.373$ is the matrix multiplication constant. 
\end{theorem}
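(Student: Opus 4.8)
The plan is to realize $\primal_h(X)$ as the optimum of an explicit finite-dimensional \emph{convex} program, to solve both it and its dual to high accuracy with off-the-shelf interior-point machinery, and then to \emph{round} the near-optimal primal and dual solutions --- in a short sequence of steps passing through intermediate combinatorial objects --- into a chaining tree $\CCT^*$ and a $\tfrac1{10}$-packing tree $\CT^*$, losing only constant factors overall. Since $X$ is finite a probability measure is a point of the simplex $\Delta(X) \subseteq \R_{\ge 0}^{n}$, and since $d$ takes at most $\binom{n}{2}$ distinct values $0 = \eps_0 < \eps_1 < \dots < \eps_m$, the map $\eps \mapsto \mu(B(x,\eps))$ is a step function; hence $\int_0^\infty h(\mu(B(x,\eps)))\,d\eps = \sum_{j} (\eps_{j+1}-\eps_j)\,h\!\bigl(\mu(B(x,\eps_j))\bigr)$, being a nonnegative combination of compositions of the convex decreasing function $h$ (\defref{def:chain-func}) with the linear maps $\mu \mapsto \mu(B(x,\eps_j))$, is convex in $\mu$; taking $\max_{x \in X}$ preserves convexity. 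Thus $\primal_h(X) = \min_{\mu \in \Delta(X)} \max_{x \in X} \sum_j (\eps_{j+1}-\eps_j)\,h(\mu(B(x,\eps_j)))$ is a convex program in $O(n)$ variables with $n$ convex constraints (after introducing an epigraph variable for the max), and a standard interior-point solver computes a measure $\mu^{*}$ together with an accompanying dual certificate of value within a $(1+1/\poly(n))$ factor of $\primal_h(X)$; each Newton step reduces to an $O(n)\times O(n)$ linear solve, i.e.\ $\tilde{O}(n^{\omega})$ arithmetic operations plus $O(n^2)$ evaluations of $h$ and $h'$, and the total stays within the claimed budget.

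\textbf{Primal rounding: measure to chaining tree.} From $\mu^{*}$ we build $\CCT^*$ with $\val_h(\CCT^*) \lesssim \primal_h(X)$, refining the Dudley-type hierarchical net construction recalled above and Talagrand's measure-to-tree procedure~\cite{T01}. We form a laminar family of balls along the geometric scales $\eps_k \asymp 2^{-k}\diam(X)$; inside a ball $B$ selected at scale $k$ we greedily pick a maximal $\eps_{k+1}$-separated set of new centers, attach each new center $u$ to its already-placed parent with edge probability $p_u \asymp \mu^{*}(B_u)$ where $B_u$ is the child ball that $u$ represents, and round these probabilities to powers of two. The packing structure keeps the total probability sum below $1/2$, so $\CCT^*$ is a valid chaining tree, while the cost along any root-to-leaf path telescopes scale by scale against $\sup_x \int_0^\infty h(\mu^{*}(B(x,\eps)))\,d\eps$, which is within a $(1+1/\poly(n))$ factor of $\primal_h(X)$; the log-concave type of $h$ is precisely what lets the geometric summation over scales and the rounding of masses be absorbed into constants. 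Combined with the reverse comparison (any chaining tree yields a measure of value at most $3\,\val_h(\CCT^*)$, \lref{lem:cctbound}), this already gives $\val_h(\CCT^*) \asymp \primal_h(X)$.

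\textbf{Dual rounding: dual certificate to packing tree.} This is the crux. Writing the Lagrangian/Fenchel dual of the convex program above --- dualize $\max_x$ with multipliers $\lambda \in \Delta(X)$ and apply convex conjugacy to each occurrence of $h$ --- the optimal dual solution organizes into a weighting on (scale, ball) pairs that behaves like a \emph{fractional} packing tree: a hierarchically separated family of balls carrying fractional multiplicities playing the role of $\deg(\cdot)$ in~\eqref{eq:sep-tree-value}. We round this to an honest $\tfrac1{10}$-packing tree obeying the rigid schedule of \defref{def:separated-tree} by the familiar greedy sub-partitioning, now \emph{driven by the given dual solution rather than by a functional inequality}: processing scales from coarse to fine, at a node $V$ we greedily carve out the largest number $\deg(V)$ of mutually $\tfrac{1}{10}\alpha^{\chi(V)}\diam(X)$-separated sub-balls of diameter at most $\alpha^{\chi(V)+1}\diam(X)$ on which the residual dual mass is still large, and charge the branching at $V$ to the term $\alpha^{\chi(V)}\diam(X)\,h(1/\deg(V))$; recursing until every leaf is a singleton yields $\val_h(\CT^*) \gtrsim \primal_h(X)$. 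Together with \lref{lem:weak-comb-dual} (which gives $\primal_h(X) \gtrsim \val_h(\CT^*)$), the cycle $\val_h(\CT^*) \lesssim \primal_h(X) \lesssim \val_h(\CCT^*) \lesssim \primal_h(X) \lesssim \val_h(\CT^*)$ closes, proving $\val_h(\CCT^*) \asymp \val_h(\CT^*)$. Each rounding reduces to $O(n)$ maximal-separated-set and transitive-closure computations on $X$, each costing $\tilde{O}(n^{\omega})$ via fast matrix multiplication, so the overall running time is $\tilde{O}(n^{\omega+1})$.

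\textbf{Main obstacle.} Everything except the dual rounding is routine convexity and bookkeeping; the real difficulty --- precisely the source of difficulty in~\cite{Talagrand87,LT91} --- is to discretize the fractional dual object so that the greedy extraction of separated sub-families simultaneously (i) respects the geometric diameter and separation schedule of \defref{def:separated-tree}, (ii) terminates with all leaves singletons, and (iii) loses only a constant factor of the dual objective along \emph{every} root-to-leaf path, not merely on average. The advantage of the optimization viewpoint is that we are rounding a \emph{given} near-optimal dual solution, so this loss can be controlled by elementary convexity estimates together with the regularity of log-concave $h$, rather than by bootstrapping an inequality such as~\eqref{eq:super-chaining}.
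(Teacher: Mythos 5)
Your high-level strategy (cast $\primal_h(X)$ as a convex program, solve it, round primal and dual solutions to trees) matches the paper's, and your primal half is essentially the paper's: greedy hierarchical ball partitioning driven by the measure $\mu^*$, as in \thmref{thm:lnet}. But there is a genuine gap in the dual half, which you yourself flag as the crux and then dispatch with an assertion rather than an argument. The dual of the saddle-point formulation \eqref{eq:saddle-point} is not ``a weighting on (scale, ball) pairs that behaves like a fractional packing tree''; it is a probability measure $\nu$ on the points of $X$, and its objective value is $\min_{\mu}\int_X\int_0^\infty h(\mu(B(x,\eps)))\,d\eps\,d\nu(x)$ --- which cannot even be \emph{evaluated} without solving another convex program, let alone rounded. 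The paper must do two substantive things before any tree extraction is possible: first, \lref{lem:guess-mu} shows that guessing $\mu=\nu$ in the inner minimization loses only a factor $2$ plus $\diam(X)/e$, and its proof is a genuinely combinatorial argument via a Hall-type decomposition for transversal matroids (\propref{clm:match}), not ``elementary convexity''; second, \lref{lem:ent-to-simple} converts the resulting \emph{average} over $\nu$ into a \emph{minimum} over the support of a conditioned measure $\nu_S$, via a greedy point-removal procedure whose correctness needs the monotonicity argument in \clmref{clm:entinc1}. Only the measure $\nu_S$ is then rounded to a packing tree, and that rounding (\thmref{thm:sep}) itself requires a delicate potential-function induction (\clmref{clm:sep1}, \clmref{clm:sep2}) to guarantee the per-path, not on-average, guarantee you correctly identify as requirement (iii). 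None of these three ingredients is present or replaced in your proposal, and the claim that the per-path loss ``can be controlled by elementary convexity estimates together with the regularity of log-concave $h$'' is exactly the step that fails without them.

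A secondary, more forgivable discrepancy: the paper does not use a generic interior-point method with a dual certificate, but a convex-concave saddle-point solver applied to \eqref{eq:saddle-point} after truncating the primal simplex (to bound the Lipschitz constant of the objective; see \clmref{clm:trnc} and \clmref{clm:lip} in \appref{sec:alg}). Your solver choice could likely be made to work, but the form of the dual object it outputs is what the entire second half of the proof hinges on, so it cannot be left unspecified.
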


We note that the packing tree parameter $\frac1{10}$ can be made smaller at the cost
increasing the hidden constant in the $\asymp$ notation. Recall that for any
pair of trees $\CCT$ and $\CT$ as above, we have already seen that
\begin{equation}
\label{eq:gen-weak-duality}
\val_h(\CCT) \gtrsim \primal_h(X) \gtrsim \val_h(\CT),
\end{equation}
so the pair in \thmref{thm:constructive} form a nearly-optimal primal-dual
pair. Furthermore, in the Gaussian setting where $h=g$, replacing $\primal_2(X)$
above by $\E[\sup_x Z_x]$ corresponds to the ``easy direction'' of the
majorizing measures theorem. Plugging in the solutions from
\thmref{thm:constructive} immediately yield the hard direction of the
theorem. This allows us to view the metric space part of the majorizing measure
theorem as an instance of a combinatorial min-max theorem.  We remark that such a combinatorial min-max characterization of the Majorizing Measures theorem was already observed by Gu{\'e}don 
and Zvavitch~\cite{GZ03}. They showed that the value of the optimal packing tree defines a functional that satisfies the super-Sudakov inequality; when combined with Talagrand's framework, this  implies the combinatorial min-max theorem described above. Although this does not directly yield deterministic constructions of nearly-optimal chaining or packing trees.

\cite{DLP12} essentially used this observation of \cite{GZ03} along with a suitable dynamic program to give an efficient deterministic algorithm to compute nearly optimal packing trees, as mentioned previously. For nearly optimal chaining trees, we make the simple observation (which seems to have gone unnoticed) that
these can extracted from Talagrand's~\cite{T01} ``rounding'' algorithm applied
to a nearly optimal solution for the efficiently solvable convex program
$\primal_h(X)$. By themselves however, these algorithms do not directly say much
about how the values of these different trees relate to each other.    

In \thmref{thm:constructive}, we build further on the convex programming
approach. At a high level, we build the primal and dual solutions at the same
time and rely on convex programming duality to ensure they have (nearly) the
same value. In essence, we replace the ``magic functionals''  satisfying super-Sudakov or super-chaining inequalities that appear in Talagrand's
constructions with convex duality. As
we will see in the next section, the dual objects will also correspond to
probability measures. In contrast to the primal however, where the rounding to
a suitable chaining tree can be done in one shot, the dual measures will require
multiple levels of rounding.

The primal and dual solutions we require correspond to nearly optimal primal and
dual measures associated with a saddle-point formulation of $\primal_h(X)$
(see~\eqref{eq:saddle-point} in the next subsection). There are in fact many
existing solvers that are able to compute nearly optimal solutions to such saddle
point problems, where we will rely on a recent fast solver of~\cite{JLSW20}.
This computation in fact forms the bulk of the running time of the algorithm in
\thmref{thm:constructive}. The details of this part of the algorithm are
covered in~\appref{sec:alg}. An interesting open problem is whether one can
reduce the running time of \thmref{thm:constructive} to $\widetilde{O}(n^2)$,
which would be nearly-linear in the input size (recall that an $n$ point metric
consists of $n^2$ distances). The main bottleneck is the use of an all purpose
blackbox solver~\cite{JLSW20} to approximately solve~\eqref{eq:saddle-point},
and it seems likely that an appropriately tailored first-order method could
bring the running time down to $\tilde{O}(n^2)$. 

While our main contribution is conceptual, we expect and hope that novel and
interesting applications of an ``algorithmic'' theory of chaining will be found.
As a contribution on this front, in \secref{sec:gordon}, we give an
application of \thmref{thm:constructive} in the context of derandomization:
we give a deterministic algorithm for computing Johnson-Lindenstrauss
projections achieving the guarantees of Gordon's theorem~\cite{Gordon88}, where
we rely on a chaining based proof from~\cite{ORS18}. As far as we are aware,
no prior deterministic construction was known.

\subsubsection{Simplifying the Dual of $\mathbf{\gamma}_h(X)$} \label{sec:simplifyingdual}

For simplicity of notation, throughout this section (and most of the paper), we
will assume that $(X,d)$ is a fixed $n$-point metric space and that $h$ is a
chaining functional of log-concave type satisfying $|h'(1)|=1$ (interpreted as
the left directional derivative). Under this normalization on $h$, the trivial
diameter lower bound on $\primal_h(X)$ will be at least $\diam(X)/4$, which we
will use to convert additive errors to multiplicative ones. This
normalization is without loss of generality, and can be achieved by
appropriately scaling the $h$ and the metric $d$ so that $\primal_h(X)$
remains unchanged (see \secref{sec:derivative} for a full explanation).

We now describe the dual formulation of $\primal_h(X)$ and describe the process
of simplifying it. For this purpose, we start with the basic saddle-point
formulation of $\primal_h(X)$: 
\begin{equation}
\label{eq:saddle-point}
\primal_h(X) = \min_{\mu} \max_{x \in X} \int_0^\infty h(\mu(B(x,\eps)))d\eps
= \min_{\mu} \max_{\nu} \int_X \int_0^\infty h(\mu(B(x,\eps))) d\eps d\nu(x)
\end{equation}
where $\nu$ also ranges over all probability measures on $X$ (the optimal $\nu$
above puts mass $1$ on any maximizer of $\int_0^\infty h(\mu(B(x,\eps))d\eps$).  

To obtain the dual program to $\primal_h(X)$, we interchange $\mu$ and $\nu$: 
\begin{equation}
\label{eq:real-dual}
\primal_h(X) \geq \max_{\nu} \min_{\mu} \int_X \int_0^\infty h(\mu(B(x,\eps)))d\eps
d\nu(x) \defeq \realdual_h(X).
\end{equation}
In particular, for any fixed dual measure $\nu$, we have 
\begin{equation}
\label{eq:real-dual-value}
\primal_h(X) \geq \min_{\mu} \int_X \int_0^\infty h(\mu(B(x,\eps)))d\eps d\nu(x).
\end{equation}
Since the objective $\int_X \int_0^\infty h(\mu(B(x,\eps)))d\eps \nu(x)$ is convex in
$\mu$ and linear in $\nu$, and the probability simplex is compact and convex, by
Sion's theorem the value of both convex programs is equal. That is, $\primal_h(X)
= \realdual_h(X)$. The measures required within the construction in
\thmref{thm:constructive} will be nearly optimal primal and dual measures
$\mu^*$ and $\nu^*$ to $\primal_h(X)$ and $\realdual_h(X)$ respectively. 

Unfortunately, it is not clear at this point that the dual is terribly useful.
In particular, even evaluating the objective in~\eqref{eq:real-dual-value} for a
given dual measure $\nu$ requires solving a non-trivial convex optimization
problem (note that the corresponding objective of $\primal_h(X)$ can be computed
by simply evaluating $n$ integrals). Rather surprisingly, it turns out that
for $h$ of log-concave type, one can in fact ``guess'' a near-optimal $\mu$
in~\eqref{eq:real-dual-value}, namely, we can set $\mu=\nu$. 

\begin{lemma} 
\label{lem:guess-mu}
For any probability measure $\nu$ on $X$,  we have that
\[
\int_X \int_0^\infty h(\nu(B(x,\eps))d\eps d\nu(x) \leq 2\min_{\mu} \int_X
\int_0^\infty h(\mu(B(x,\eps)))d\eps d\nu(x) + \diam(X)/e,
\]
where the minimum is taken over all probability measures $\mu$. 
\end{lemma}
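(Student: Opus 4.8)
\bigskip

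The plan is to bound the objective $\int_X \int_0^\infty h(\nu(B(x,\eps))) d\eps\, d\nu(x)$ with $\mu = \nu$ by the value obtained with an arbitrary competitor measure $\mu$, losing only a factor of $2$ and an additive $\diam(X)/e$ term. The key idea is a pointwise comparison of the inner integrands: I would like to show that for each fixed $x$ and each radius $\eps$, the quantity $h(\nu(B(x,\eps)))$ is controlled by $h(\mu(B(x,\eps)))$ on average over $x \sim \nu$. The natural handle is that $\nu$ and $\mu$ are both probability measures, so $\int_X \mu(B(x,\eps))\,d\nu(x) = \int_X \nu(B(x,\eps))\,d\mu(x)$ by Fubini (both equal the $\nu\times\mu$-measure of the pair set $\{(x,y) : d(x,y)\le\eps\}$). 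This symmetry is what lets $\mu$ and $\nu$ trade places.

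\bigskip

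The main technical step is a convexity/log-concavity argument applied to $h$. Recall from \defref{def:chain-func} that $h$ is convex and strictly decreasing on $(0,1]$, with $h(1)=0$, and of log-concave type. The plan is to use log-concavity of $h$ (or of the density $f$) to get an inequality of the form $h(st) \le h(s) + h(t)$ for $s,t \in (0,1]$, i.e.\ sub-additivity of $h$ under multiplication of arguments — this is exactly the kind of statement that holds for $h(p) \asymp \ln^{1/q}(1/p)$ and more generally for log-concave-type functionals. Then I would write $\nu(B(x,\eps))$ in a way that factors through $\mu(B(x,\eps))$: heuristically, if $\mu$ and $\nu$ were comparable we would be done, but in general they are not, so instead I would split based on whether $\nu(B(x,\eps))$ is large or small. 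When $\nu(B(x,\eps)) \ge \mu(B(x,\eps))$ (say), monotonicity of $h$ already gives $h(\nu(B(x,\eps))) \le h(\mu(B(x,\eps)))$ for free. The difficult regime is when $\nu(B(x,\eps))$ is much smaller than $\mu(B(x,\eps))$; here I expect to use a doubling/averaging trick: the set of $x$ for which $\nu(B(x,\eps))$ is very small has small $\nu$-measure, and on that set one pays at most the diameter bound, which after integrating over $\eps \in [0,\diam(X)]$ contributes the additive $\diam(X)/e$.

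\bigskip

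More concretely, I would try the following. Fix $\eps$ and let $\phi(x) = \mu(B(x,\eps))$, $\psi(x) = \nu(B(x,\eps))$. I want $\int_X h(\psi)\,d\nu \le 2\int_X h(\phi)\,d\nu + (\text{additive term in }\eps)$ where the additive terms integrate to $\diam(X)/e$. Using $h(st)\le h(s)+h(t)$, write $h(\psi(x)) = h\big(\frac{\psi(x)}{\phi(x)}\cdot \phi(x)\big) \le h(\phi(x)) + h\big(\min(1,\psi(x)/\phi(x))\big)$ — taking the min because $h$ is only defined on $(0,1]$ and we can always enlarge the argument (decreasing $h$) when the ratio exceeds $1$. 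It then remains to bound $\int_X h\big(\min(1,\psi(x)/\phi(x))\big)\,d\nu(x)$ by $\int_X h(\phi(x))\,d\nu(x)$ plus an $\eps$-additive term. By Fubini-type symmetry $\int_X \psi\,d\mu = \int_X \phi\,d\nu$, i.e.\ $\nu$-average of $\phi$ equals $\mu$-average of $\psi$; combined with convexity of $h$ and Jensen, this should let me absorb the ratio term into a second copy of $\int h(\phi)\,d\nu$, explaining the factor $2$, with the log-concave-type decay of $h$ near $0$ ensuring the leftover is at most $\diam(X)/e$ after integrating the radius.

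\bigskip

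The step I expect to be the main obstacle is making the "ratio term" bound rigorous: controlling $\int_X h\big(\psi(x)/\phi(x)\big)d\nu(x)$ requires relating a ratio of the two measures' ball-masses — pointwise a genuinely two-sided object — back to a single integral of $h\circ\phi$. The symmetry identity only gives control of the arithmetic means of $\phi$ and $\psi$, not of the ratio pointwise, so I anticipate needing a careful layer-cake decomposition over the level sets $\{x : \psi(x)/\phi(x) \in [2^{-k-1}, 2^{-k})\}$, bounding the $\nu$-measure of each level set using the symmetry relation, and then summing $h(2^{-k}) \cdot \nu(\text{level }k)$ against $\sum_k h(2^{-k})\cdot(\text{something summable})$. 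The log-concave-type growth $h(2^{-k}) = O(k^{1/q})$ (or more precisely the convexity and the normalization $|h'(1)|=1$) is what makes this geometric-type sum converge and gives the clean constants $2$ and $1/e$; getting exactly those constants (rather than larger absolute constants) will require being somewhat careful with the log-concavity inequality for $h$ and may be where one uses the precise hypothesis that $f$ is log-concave rather than merely that $h$ is convex.
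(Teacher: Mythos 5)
Your decomposition $h(\psi(x)) \le h(\phi(x)) + h\bigl(\min(1,\psi(x)/\phi(x))\bigr)$ via sub-multiplicativity is exactly the right first move, and your guess that the $1/e$ comes from a quantity like $\max_{\beta}\beta\log(1/\beta)$ is on target. But there are two genuine gaps. First, the factor $2$ does not come from ``a second copy of $\int h(\phi)\,d\nu$ absorbed via Jensen''; it comes from a \emph{change of radius}. The correct per-scale statement (\lref{lem:scale} in the paper) compares $h(\nu(B(x,2\eps)))$ against $h(\mu(B(x,\eps)))$ --- the ball on the $\nu$ side has \emph{twice} the radius --- and the factor $2$ appears only when you substitute $\eps\mapsto\eps/2$ in the outer integral over scales. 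Your same-radius comparison is false: take $X$ to be $n$ points pairwise at distance $>\eps$ plus one hub $z$ within distance $\eps$ of all of them, $\nu$ uniform on the $n$ spokes, $\mu=\delta_z$, and $h(p)=\log(1/p)$. Then $\nu(B(x,\eps))=1/n$ and $\mu(B(x,\eps))=1$ for every spoke $x$, so $\int h(\nu(B(x,\eps)))\,d\nu = \log n$ while $\int h(\mu(B(x,\eps)))\,d\nu = 0$. The same example kills your heuristic that ``the set of $x$ for which $\nu(B(x,\eps))$ is very small has small $\nu$-measure,'' and it shows the level sets of the ratio $\psi/\phi$ can have full $\nu$-measure while $h(\text{ratio})$ is unbounded.

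Second, even after doubling the radius, the ratio term is not controlled by the Fubini symmetry $\int\phi\,d\nu=\int\psi\,d\mu$ plus convexity; that identity constrains only arithmetic means, and no layer-cake over ratio level sets closes the argument from it alone. The paper instead recasts the scale-$\eps$ comparison as a bipartite graph problem ($x_1\sim x_2$ iff $d(x_1,x_2)\le\eps$, so $\mu(N(x))=\mu(B(x,\eps))$ and $\nu(N^2(x))\le\nu(B(x,2\eps))$ --- this is where the doubled radius is used) and invokes a Hall-type principal-sequence decomposition (\pref{clm:match}): nested sets $S_1\subset\cdots\subset S_k=X_1$ with rates $\beta_i$ such that $\beta_i\mu(A)\le\nu(N(A)\setminus N(S_{i-1}))$ for $A\subseteq X_1\setminus S_{i-1}$. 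This gives the pointwise bound $\tilde\beta_i\,\mu(N(x))\le\nu(N^2(x))$ on each layer, so the ratio term on layer $i$ is at most $\log(1/\tilde\beta_i)$, and the exact identity $\beta_i\mu(S_i\setminus S_{i-1})=\nu(N(S_i)\setminus N(S_{i-1}))$ converts the $\nu$-weighted sum of these errors into $\sum_i\tilde\beta_i\mu(S_i\setminus S_{i-1})\log(1/\tilde\beta_i)\le 1/e$. So the missing ideas are (i) the radius doubling and (ii) the combinatorial matching structure replacing your averaging argument.
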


The proof of the above proceeds on a ``per scale'' basis.  More precisely, for a
given $\eps
> 0$, we show that $\int_X h(\nu(B(x,2\eps)) d\nu(x) \le \int_X
h(\mu(B(x,\eps))) d\nu(x) + \Ch/e$.  This statement is easily restated in
graph-theoretic terms, by defining a graph $G=(X_1 \cup X_2,E)$, with $X_1, X_2$
both being copies of $X$ and where $x_1 \in X_1$ is adjacent to $x_2 \in X_2$ if
$d(x_1, x_2) \leq \eps$.  Then $\mu(B(x,\eps))$ corresponds to the mass under
$\mu$ within the neighborhood of $x$, and $\nu(B(x,2\eps))$ to the mass under
$\nu$ within the two-hop neighborhood of $x$. In this setting, we use a tool
from combinatorial optimization, namely, a generalization of Hall's marriage
theorem. We note the two properties needed from $h$ above are that $h$ be
decreasing and $\max_{a \in (0,1]} |ah'(a)| \leq 1$. The latter property in fact
follows from $h$ being of log-concave type and the normalization $|h'(1)|=1$. 

Motivated by the above, we consider the following simplification of
$\realdual_h(X)$, which we call the \emph{entropic dual}:  
\begin{equation}
\label{eq:entropic-dual}
\ent_h(X) \defeq \max_{\nu} \int_X \int_0^\infty h(\nu(B(x,\eps))) d\eps d\nu(x).
\end{equation}
This corresponds to the value $\nu$ using the nearly optimal guess for $\mu$
in~\eqref{eq:real-dual-value}, which is now readily computable. The following
direct corollary relates the value of the entropic dual to the actual dual.
\begin{corollary}
\label{cor:dual-to-ent}
\[
\realdual_h(X) \leq \ent_h(X) \leq 2\realdual_h(X) + \diam(X)/e.
\]
\end{corollary}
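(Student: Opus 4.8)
The plan is to read off both inequalities directly from \lref{lem:guess-mu} and the min--max definitions~\eqref{eq:real-dual} and~\eqref{eq:entropic-dual}; no new ideas are needed, so this is essentially bookkeeping. The one place requiring a word of care is the existence of a maximizer for $\ent_h(X)$, and even that is routine.

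For the lower bound $\realdual_h(X) \le \ent_h(X)$, I would fix an arbitrary probability measure $\nu$ on $X$ and observe that $\mu = \nu$ is a feasible choice in the inner minimization defining $\realdual_h$, so
\[
\min_{\mu} \int_X \int_0^\infty h(\mu(B(x,\eps))) d\eps\, d\nu(x) \;\le\; \int_X \int_0^\infty h(\nu(B(x,\eps))) d\eps\, d\nu(x).
\]
Taking the supremum over $\nu$ of both sides and recalling the definitions of $\realdual_h(X)$ and $\ent_h(X)$ then yields the claim.

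For the upper bound I would let $\nu^*$ be a probability measure attaining the maximum in~\eqref{eq:entropic-dual}; such a maximizer exists because, on the finite set $X$, the objective $\nu \mapsto \int_X \int_0^\infty h(\nu(B(x,\eps)))\, d\eps\, d\nu(x)$ extends continuously to the compact probability simplex, using that $a\, h(a) \to 0$ as $a \to 0^+$ (which holds since an $h$ of log-concave type normalized by $|h'(1)| = 1$ satisfies $h(a) \le \ln(1/a)$), and alternatively one could simply work with a sequence of near-maximizers and pass to the limit at the end. Applying \lref{lem:guess-mu} with $\nu = \nu^*$ then gives
\[
\ent_h(X) = \int_X \int_0^\infty h(\nu^*(B(x,\eps))) d\eps\, d\nu^*(x) \;\le\; 2 \min_{\mu} \int_X \int_0^\infty h(\mu(B(x,\eps))) d\eps\, d\nu^*(x) + \diam(X)/e.
\]
Since $\nu^*$ is one admissible choice in the outer maximization in~\eqref{eq:real-dual}, the inner minimum on the right-hand side is at most $\realdual_h(X)$, and we conclude $\ent_h(X) \le 2\realdual_h(X) + \diam(X)/e$. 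There is no genuine obstacle in this step; all of the substance sits inside \lref{lem:guess-mu}, and what little remains (the continuity/compactness remark ensuring $\nu^*$ exists) can be bypassed entirely by the near-maximizer formulation.
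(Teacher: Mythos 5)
Your proposal is correct and is exactly the argument the paper intends: the corollary is stated as a ``direct corollary'' of \lref{lem:guess-mu}, with the lower bound coming from taking $\mu=\nu$ in the inner minimization and the upper bound from applying the lemma to a (near-)maximizer of the entropic dual and bounding the resulting inner minimum by $\realdual_h(X)$. Your extra care about the existence of a maximizer (or the near-maximizer workaround) is a harmless refinement of the same proof.
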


In terms of the additive error above, as mentioned at the beginning of the
section, $\diam(X)/4$ will be the trivial lower bound on $\realdual_h(X)=\primal_h(X)$.
Therefore, the right hand in \thmref{cor:dual-to-ent} is at most
$(2+4/e)\realdual_h(X)$ in the worst-case. In most interesting cases however, one
would expect $\primal_h(X)$ to be far from the trivial lower bound, in which case
one can think of the right hand side as $(2+o(1))\realdual_h(X)$. 

We are now ready to give the final simplified form of the dual whose value will
most directly relate to the value of packing trees: we define the \emph{simplified
dual} by
\begin{equation}
\label{eq:simplified-dual}
\dual_h(X) \defeq \max_{\nu} \min_{x \in X, \nu(x) > 0} \int_0^\infty
h(\nu(B(x,\eps)))d\eps. 
\end{equation}
Note that we restrict to the minimum of the points supported by $\nu$. These
will correspond to the potential leaf nodes in the packing tree. Furthermore,
the minimum in~\eqref{eq:simplified-dual} is in direct analogy to the minimum
cost of a path down a packing tree. 

Trivially, since we replaced the average by a minimum, we have that $\ent_h(X)
\geq \dual_h(X)$. We show that the reverse direction also holds up to additive
error. For a probability measure $\nu$ on $X$ and for any subset $S \subseteq
X$, satisfying $\nu(S) > 0$, define $\nu_{S}$ by
\begin{equation}
\label{eq:cond-measure}
\nu_S(A) \defeq \nu_{S}(A \cap S)/\nu(S), ~~~~ \forall A \subseteq X,
\end{equation}
i.e., $\nu_S$ is the conditional probability measure induced by $\nu$ on $S$.
The following lemma shows that one can easily convert a measure $\nu$ with large
$\ent_h$ value to one with large $\dual_h$ value via conditioning. 

\begin{lemma}
\label{lem:ent-to-simple}
For any probability measure $\nu$ on $X$, there exists
$S \subseteq \{x \in X: \nu(x) > 0\}$ such that
\[
    \int_X \int_0^\infty h(\nu(B(x,\eps)))d\eps \leq \min_{x \in S} \int_0^\infty h(\nu_S(B(x,\eps)))d\eps \;+\; \diam(X).   
\]
Furthermore, $S$ can be computed using at most $O(n^3)$ arithmetic
operations and evaluations of $h$.
\end{lemma}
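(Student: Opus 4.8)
The plan is to find $S$ by a greedy ``peeling'' argument. Start with $S_0 = \{x \in X : \nu(x) > 0\}$ and $\nu^{(0)} = \nu$. At each step $i$, if the measure $\nu^{(i)} = \nu_{S_i}$ already satisfies $\min_{x \in S_i}\int_0^\infty h(\nu^{(i)}(B(x,\eps)))d\eps \ge \int_X \int_0^\infty h(\nu^{(i)}(B(x,\eps)))d\eps\, d\nu^{(i)}(x)$ up to the allowed additive slack, we stop. Otherwise there is a ``bad'' point $x_i \in S_i$ whose integral $\int_0^\infty h(\nu^{(i)}(B(x_i,\eps)))d\eps$ is strictly smaller than the $\nu^{(i)}$-average; we remove $x_i$ (and possibly all points within some radius, if that is more convenient) to form $S_{i+1}$, and renormalize. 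The key point to track is a potential function, namely the entropic-dual objective $\Phi(S) \defeq \int_X\int_0^\infty h(\nu_S(B(x,\eps)))d\eps\, d\nu_S(x)$; I want to argue that removing a point of below-average value does not decrease $\Phi$ by more than a controlled amount — ideally, it does not decrease it at all, up to the $\diam(X)$ additive term. Since the process terminates in at most $n$ steps (each step deletes at least one point), the final $S$ will satisfy the desired inequality with $\Phi(S) \ge \Phi(S_0) - (\text{total loss})$, and the total loss is what we must bound by $\diam(X)$.

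The crucial estimate is a monotonicity-type claim: \emph{conditioning a measure on a large subset does not decrease the relevant integrals much}. Concretely, if $S' \subseteq S$ with $\nu(S') \ge \frac12 \nu(S)$, then for every $x \in S'$ we have $\nu_{S'}(B(x,\eps)) \ge \nu_S(B(x,\eps))$ is false in general, but $\nu_{S'}(B(x,\eps)) \ge \nu_S(B(x,\eps) \cap S')/\nu(S') \ge \nu_S(B(x,\eps))\cdot (\text{something}) $ — so instead I will exploit that $h$ is decreasing together with the normalization $\max_{a}|ah'(a)| \le 1$ (the same ``entropy-stability'' property used in Lemma~\ref{lem:guess-mu}) to show that rescaling the measure by a constant factor $c \in [\frac12,1]$ changes $\int_0^\infty h(c\cdot \nu(B(x,\eps)))d\eps$ by at most an additive $O(\diam(X))$ term: indeed $h(ca) - h(a) = \int_{ca}^a \frac{-h'(t)}{t}\cdot t\, \frac{dt}{t} \le \log(1/c)\cdot \max_t|th'(t)| \le \log 2$ pointwise, and integrating $\eps$ over $(0,\diam(X)]$ gives the bound. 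This lets me control the effect of renormalization. The removal of the point's own contribution from the average only increases the average (since $x_i$ was below-average), so $\Phi$ does not drop from that source; the only loss is from the renormalization of the surviving points' integrals, which is why I want to delete points in batches that halve the mass at most a constant number of — actually $O(\log n)$ — times, or alternatively delete one point at a time but observe that the total multiplicative shrinkage of $\nu(S)$ across all steps is at most $1$, so $\sum_i \log(\nu(S_i)/\nu(S_{i+1})) = \log(1/\nu(S_{\text{final}})) \le \log n$, giving total loss $O(\log n \cdot \diam(X))$ — not quite $\diam(X)$. To get the clean $\diam(X)$ bound I will instead argue more carefully: choose $S$ to be the maximizer (or near-maximizer) of $\Phi(S)$ over all subsets — or better, pick $S$ directly as $\{x : \int_0^\infty h(\nu(B(x,\eps)))d\eps \ge \Phi(X) - \diam(X)\}$ style set and show it is nonempty and has enough mass, then apply the rescaling estimate once.

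The main obstacle I anticipate is getting the additive error down to exactly $\diam(X)$ rather than $O(\diam(X)\log n)$; this forces the argument to avoid an iterative halving that accumulates logarithmically, and instead to isolate a \emph{single} conditioning step. The cleanest route: let $S^\star$ maximize $\nu(S)\cdot \min_{x\in S}\int_0^\infty h(\nu_S(B(x,\eps)))d\eps$ — no; rather, I will use a direct averaging/Markov argument. Let $t^\star = \dual_h$-type value $\min_{x\in S}\int_0^\infty h(\nu_S(B(x,\eps)))d\eps$ we aim for. Define $S = \{x \in X : \nu(x)>0,\ \int_0^\infty h(\nu(B(x,\eps)))d\eps \ge \Phi(X) - \diam(X)\}$. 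Since the average of $\int_0^\infty h(\nu(B(x,\eps)))d\eps$ over $x \sim \nu$ is $\Phi(X)$ and each term is at most $\int_0^\infty h(\nu(\{x\}))d\eps \le \int_0^{\diam(X)} h(\nu(x))d\eps$ which is bounded, a reverse-Markov inequality shows $\nu(S) \ge c$ for an absolute constant $c$ (using that the terms are bounded above by roughly $\diam(X)\cdot h(\min_x \nu(x))$ — I need the right upper bound here, which is where boundedness of the integrand is used). Then for $x \in S$, $\nu_S(B(x,\eps)) \le \nu(B(x,\eps))/\nu(S) \le \nu(B(x,\eps))/c$, so $h(\nu_S(B(x,\eps))) \ge h(\nu(B(x,\eps)))$ is the wrong direction — $h$ decreasing means larger argument gives smaller $h$, and $\nu_S(B(x,\eps)) \le \nu(B(x,\eps))/\nu(S)$... wait, $\nu_S(B(x,\eps)) = \nu(B(x,\eps)\cap S)/\nu(S) \le \nu(B(x,\eps))/\nu(S)$, an \emph{upper} bound, so $h(\nu_S(B(x,\eps))) \ge h(\nu(B(x,\eps))/\nu(S)) \ge h(\nu(B(x,\eps))) - \log(1/\nu(S))\cdot\max_t|th'(t)| \ge h(\nu(B(x,\eps))) - \log(1/c)$ pointwise, and integrating over $\eps\in(0,\diam(X)]$ gives $\int_0^\infty h(\nu_S(B(x,\eps)))d\eps \ge \Phi(X) - \diam(X) - \diam(X)\log(1/c)$. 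Absorbing $\log(1/c)$ into constants and re-choosing the threshold yields the claim. For the running time, computing each $\int_0^\infty h(\nu(B(x,\eps)))d\eps$ requires sorting the $n$ pairwise distances from $x$ and summing $n$ evaluations of $h$, so $O(n^2)$ per point and $O(n^3)$ overall, matching the stated bound; selecting $S$ is then a linear scan.
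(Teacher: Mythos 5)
Your opening instinct --- greedy peeling of the point with the smallest value, tracking the entropic objective $\Phi(S)=\int_X\int_0^\infty h(\nu_S(B(x,\eps)))\,d\eps\,d\nu_S(x)$ as a potential --- is exactly the paper's proof, but you abandon it because you only account for the \emph{cost} of each renormalization and not the \emph{credit} from deleting a below-average point. The paper's accounting is: remove $s$ only when $H(\nu_S,s)<\Phi(S)-\diam(X)$, set $\alpha=1/(1-\nu_S(s))$, and note (via the bound $-1\le ah'(a)\le 0$) that renormalization costs each surviving point at most $\diam(X)(\alpha-1)=\diam(X)\,\nu_S(s)/(1-\nu_S(s))$ additively, while dropping the term $\nu_S(s)H(\nu_S,s)$ from the average and rescaling by $1/(1-\nu_S(s))$ gains at least $\nu_S(s)\bigl(\Phi(S)-H(\nu_S,s)\bigr)/(1-\nu_S(s))>\diam(X)\,\nu_S(s)/(1-\nu_S(s))$. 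So the potential is \emph{non-decreasing at every step}: there is no loss to accumulate, no $\log n$ appears, and termination immediately gives $\Phi(\text{initial})\le\Phi(S)\le\min_{x\in S}H(\nu_S,x)+\diam(X)$. Your worry that the total multiplicative shrinkage forces an $O(\diam(X)\log n)$ error is therefore unfounded --- but only because of this exact cancellation, which your write-up never establishes.

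The single-shot replacement you propose instead has a genuine gap. You define $S=\{x:\;H(\nu,x)\ge\Phi(X)-\diam(X)\}$ and assert $\nu(S)\ge c$ for an absolute constant by ``reverse Markov.'' That requires an upper bound $M$ on $H(\nu,x)$ with $M-\Phi(X)\lesssim\diam(X)$, and no such bound holds: $H(\nu,x)$ can be as large as $\diam(X)\log(1/\nu(x))$, so if a tiny-$\nu$-mass set of points carries all the large values (e.g.\ $H(\nu,x)=M\gg\diam(X)$ on a set of mass $\delta=\Phi(X)/M$ and $H(\nu,x)\approx 0$ elsewhere), then $\nu(S)=\delta$ is arbitrarily small, the correction term $\diam(X)\log(1/\nu(S))$ in your rescaling estimate blows up, and the argument fails. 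Moreover, after conditioning on $S$ the values $H(\nu_S,x)$ change, so even if the first conditioning worked you would have to iterate --- which is precisely what forces the potential argument. The runtime analysis is fine, but the core inequality is not proved.
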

The algorithm achieving the above is in fact very simple: we start with $S = \{x
\in X: \nu(x) > 0\}$, and iteratively kick out the element $x \in S$ with lowest
value as long as the above inequality is not met. 

Combining~\corref{cor:dual-to-ent} and~\lref{lem:ent-to-simple},  we obtain the following
relations between the dual program $\realdual_h(X)$ and the simplified dual
$\dual_h(X)$.

\begin{theorem} 
\label{thm:dual-to-simple}
\[
\realdual_h(X)-\diam(X) \leq \dual_h(X) \leq 2\realdual_h(X) + \diam(X)/e.
\]
Furthermore, given any probability measure $\nu$ on $X$, one can compute $S
\subseteq \{x \in X: \nu(x) > 0\}$ satisfying
\[
\int_X \int_0^\infty h(\nu(B(x,\eps)))d\eps d\nu(x) \leq
\min_{x \in S} \int_0^\infty h(\nu_S(B(x,\eps)))d\eps \;+\; \diam(X), 
\]
using at most $O(n^3)$ arithmetic operations and evaluations of $g$.
\end{theorem}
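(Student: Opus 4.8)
The plan is to prove \thmref{thm:dual-to-simple} by simply chaining together the results already established, namely \corref{cor:dual-to-ent} and \lref{lem:ent-to-simple}, and then noting that the ``furthermore'' part is just a restatement of the algorithmic content of \lref{lem:ent-to-simple}. The only real work is to track how the additive errors compose and to verify that the inequalities point in the right direction; there is no genuinely hard step here, since all the heavy lifting (the Hall-type argument behind \lref{lem:guess-mu}, and the greedy ``kick out the lowest element'' argument behind \lref{lem:ent-to-simple}) is assumed.

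For the upper bound $\dual_h(X) \le 2\realdual_h(X) + \diam(X)/e$, I would observe that $\dual_h(X) \le \ent_h(X)$ holds trivially, because replacing an average over $x$ (weighted by $\nu$) by a minimum over the support of $\nu$ can only decrease the value --- this is the remark made just before \lref{lem:ent-to-simple}. Combining this with the right inequality of \corref{cor:dual-to-ent}, namely $\ent_h(X) \le 2\realdual_h(X) + \diam(X)/e$, gives the claimed bound immediately.

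For the lower bound $\realdual_h(X) - \diam(X) \le \dual_h(X)$, I would start from a near-optimal (in fact optimal, by compactness) dual measure $\nu$ achieving $\ent_h(X) = \int_X \int_0^\infty h(\nu(B(x,\eps)))\,d\eps\, d\nu(x)$, apply \lref{lem:ent-to-simple} to obtain a subset $S \subseteq \{x : \nu(x) > 0\}$ with
\[
\ent_h(X) \;\le\; \min_{x \in S} \int_0^\infty h(\nu_S(B(x,\eps)))\,d\eps \;+\; \diam(X),
\]
and then note that $\nu_S$ is itself a probability measure on $X$ whose support is contained in $S$, so the right-hand side minimum is a feasible value for the maximization defining $\dual_h(X)$; hence $\min_{x \in S} \int_0^\infty h(\nu_S(B(x,\eps)))\,d\eps \le \dual_h(X)$. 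Together with $\ent_h(X) \ge \realdual_h(X)$ from \corref{cor:dual-to-ent}, this yields $\realdual_h(X) \le \ent_h(X) \le \dual_h(X) + \diam(X)$, which is exactly the left inequality.

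Finally, the ``furthermore'' statement is obtained by applying \lref{lem:ent-to-simple} verbatim to the given measure $\nu$: it produces the desired $S$ together with the stated per-measure inequality, and the $O(n^3)$ bound on arithmetic operations and evaluations of $h$ (equivalently $g$ in the Gaussian specialization) is precisely the complexity claim of that lemma. The only point that requires a sentence of care is that the inequality in \lref{lem:ent-to-simple} is stated with the integral $\int_X \int_0^\infty h(\nu(B(x,\eps)))\,d\eps\,d\nu(x)$ on the left, matching the statement here, so no translation is needed. I do not anticipate any obstacle; if anything, the mildly delicate point is bookkeeping the direction of $\ent_h(X) \ge \dual_h(X)$ versus $\ent_h(X) \le 2\realdual_h(X) + \diam(X)/e$ so that the two composed bounds land on the correct sides of $\dual_h(X)$.
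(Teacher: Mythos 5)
Your proposal is correct and follows exactly the paper's route: the paper derives \thmref{thm:dual-to-simple} by combining \corref{cor:dual-to-ent} with \lref{lem:ent-to-simple} (plus the trivial inequality $\dual_h(X) \le \ent_h(X)$), and your composition of the additive errors and the observation that $\nu_S$ is a feasible dual measure supported on $S$ are precisely the intended (and correct) bookkeeping.
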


We are in fact not the first to examine the $\ent_h(X)$ and $\dual_h(X)$
programs. The analysis of solutions to $\ent_2(X)$ (i.e., $h=g$) in fact already
goes back all the way to Fernique~\cite{Fernique75}. Starting from a Gaussian
process $(Z_x)_{x \in X}$, Fernique examined the measure $\nu$ on $X$ satisfying
$\nu(u) = \Pr[Z_u = \max_{x \in X} Z_x]$, $\forall u \in X$, where we assume $X$
is finite and that the maximum is uniquely attained with probability $1$. For
this ``argmax'' measure $\nu$, it was shown that
\[
\E\left[\sup_{x \in X} Z_x \right] \asymp \int_X \int_0^\infty g(\nu(B(x,\eps)))d\eps d\nu(x),
\]  
where the inequalities $\lesssim$ and $\gtrsim$ where proven by
Fernique~\cite{Fernique75} and Talagrand~\cite{Talagrand87} respectively.  

The relationship between $\primal_h(X)$ and $\dual_h(X)$ was also already
studied by Naor and Mendel~\cite{MN11} as well as Bednorz~\cite{B12}. In
particular, for any continuous $h$ satisfying $\lim_{x \rightarrow 0^+} h(x) =
\infty$ (i.e., not necessarily of log-concave type),~\cite{MN11,B12} showed that
$\primal_h(X) \leq \dual_h(X)$. This was proved using Brouwer's fixed-point
theorem, which was used to find a measure $\mu$ where the quantities
$\int_0^\infty h(\mu(B(x,\eps)))d\eps$ are equal for all $x \in X$. Recalling that $\primal_h(X) = \realdual_h(X)$, this bound is stronger than $\realdual_h(X)-\diam(X) \leq \dual_h(X)$ in~\thmref{thm:dual-to-simple}. However, we do not require $\lim_{x \rightarrow 0^+} h(x) = \infty$ (e.g., $h(x) = 1-x$ is valid for us), which is crucially used to prove the existence of the above Brouwer measure.  Mendel
and Naor~\cite{MN11} further prove that $\dual_h(X) \lesssim \primal_h(X)$, for
any decreasing $h$ satisfying $h(x^2) \lesssim h(x)$. This is achieved by 
rounding any dual measure $\nu$ to what they call an \emph{ultrametric
skeleton}, which one can interpret as a very sophisticated analogue of a
packing tree which is agnostic to $h$. These skeletons are also used to derive
optimal bounds for the largest subset of a metric space embeddable into $\ell_2$
with small distortion (known as a non-linear Dvoretzky theorem). \cite{MN11}
asked whether one can improve their bound to $\dual_h(X) \leq (2+o(1))\primal_h(X)$,
where the factor of $2$ is tight up to $o(1)$ factors for an $n$-point
star-metric with $h=g$. Up to the additive constant (which is often $o(1)$
compared to $\primal_h(X)$) and the restriction that $h$ be of log-concave type,
\thmref{thm:dual-to-simple} resolves their question in the affirmative.

\subsubsection{Rounding Measures to Trees}

Towards proving our main theorem (\thmref{thm:constructive}), we show how to
round a measure $\rho$ to chaining and packing trees with values approximately
$\primal_h(\rho, X)$ and $\dual_h(\rho, X)$ respectively.  As remarked before,
the primal rounding strategy was already introduced by Talagrand \cite{T01}
himself. While the algorithm is simple, it is based on a not terribly intuitive
variant of ball partitioning, which is perhaps due to the structure of the
object he converts to (an admissible sequence). In the present work, we show
that Talagrand's basic greedy ball partitioning scheme with the functional
replaced by a measure, very transparently yields a construction of good chaining
trees. The greedy ball partitioning algorithm iteratively selects centers that
maximize the measure of balls of a smaller radius and removes a ball of a larger
radius centered at the previously chosen points. One does this until the removed
pieces form a partition and then proceeds recursively on those pieces. Here we
show that this can be implemented in near linear time in the input size which is
$O(n^2)$ for an $n$-point metric space.

\begin{theorem}\label{thm:lnet}
	There is a deterministic algorithm that given a probability measure $\rho$ on an $n$-point metric space $X$, finds a chaining tree $\CCT$ such that $\valsep(\CCT)  \lesssim  \primal_h(\rho, X)$  in $O(n^2\log n)$ time.  
\end{theorem}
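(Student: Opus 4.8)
The plan is to implement Talagrand's greedy ball-partitioning scheme, but driven by the measure $\rho$ rather than by a functional, and to organize it as a recursion on scales $\alpha^j\diam(X)$ for a fixed constant $\alpha$ (say $\alpha=1/4$). At each recursion step we are given a piece $S\subseteq X$ together with a ``current scale'' index $j$; we greedily pick a point $x_1\in S$ maximizing $\rho(B(x_1,\alpha^{j+1}\diam(X)))$, remove the larger ball $B(x_1,\alpha^{j}\diam(X))\cap S$, then pick $x_2$ in the remainder maximizing the small-ball mass, remove its large ball, and so on until $S$ is exhausted. This produces a partition $S=\sqcup_i P_i$ with centers $x_i$, where $\diam(P_i)\le 2\alpha^{j}\diam(X)$, and crucially the greedy choice guarantees $\rho(B(x_i,\alpha^{j+1}\diam(X)))\ge\rho(B(x_{i'},\alpha^{j+1}\diam(X)))$ for $i'>i$, so if a point $y$ lands in piece $P_i$ then $\rho(B(y,\alpha^{j+1}\diam(X)))\le \rho(B(x_i,\alpha^{j+1}\diam(X)))\le 1/i$ — the ``$i$-th piece has small-ball mass at most $1/i$'' property. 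We build the chaining tree by making each $x_i$ a child of the center of the parent piece, recursing into each $P_i$ at scale $j+1$ (and into the first piece $P_1$ we continue so that $x_1$ stays as the recursive root), and setting the edge probability on the edge into $x_i$ to be (proportional to) $1/i$ times a suitable geometric weight $2^{-(j+1)}$ so that $\sum p_e\le 1/2$.

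The two things to verify are (i) feasibility, $\sum_{e}p_e\le 1/2$, and (ii) the value bound. For (i): at scale $j$ the number of pieces with index $i\ge 2$ carrying a nonzero label is controlled because the pieces are internally recursed, and summing $2^{-(j+1)}\sum_i (1/i)\cdot(\text{something})$ over scales — more carefully, one assigns the budget $2^{-(j+1)}$ to scale $j$ and within a scale the masses $\rho(B(x_i,\alpha^{j+1}\diam(X)))$ of the distinct centers at that scale sum to at most $1$ (disjoint small balls), which after the $h^{-1}$/probability bookkeeping yields the $1/2$ bound by a geometric series. For (ii): fix the leaf $x\in X$ realizing $\val_h(\CCT)=\sum_{e\in\CP_x}l_e$. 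Each edge on this path was created at some scale $j$ with some index $i$, contributing $l_e = d(\cdot,\cdot)\cdot h(p_e)\lesssim \alpha^{j}\diam(X)\cdot h(c\,2^{-(j+1)}/i)\lesssim \alpha^{j}\diam(X)\,(h(1/i)+\sqrt{j+1}\cdot O(1))$ using that $h$ is of log-concave type so $h(ab)\lesssim h(a)+h(b)$ on the relevant range. The $\sqrt{j+1}$-type terms sum geometrically to $O(\diam(X))\lesssim\primal_h(\rho,X)$ by the trivial diameter lower bound. For the main terms $\alpha^{j}\diam(X)\,h(1/i)$: at the scale-$j$ step on the path, the point $x$ itself lies in some piece $P_i$, so $\rho(B(x,\alpha^{j+1}\diam(X)))\le 1/i$, hence $h(1/i)\le h(\rho(B(x,\alpha^{j+1}\diam(X))))$, and $\alpha^{j}\diam(X)\,h(\rho(B(x,\alpha^{j+1}\diam(X))))$ is (up to the constant $\alpha$) exactly the contribution of the scale-$(j+1)$ annulus to $\int_0^\infty h(\rho(B(x,\eps)))d\eps$. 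Telescoping over $j$ gives $\sum_j \alpha^{j}\diam(X)\,h(1/i_j)\lesssim \int_0^\infty h(\rho(B(x,\eps)))d\eps\le \primal_h(\rho,X)$, which is the claim.

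The main obstacle I expect is the bookkeeping that simultaneously keeps the probability sum feasible \emph{and} keeps each edge length comparable to the right annular contribution of the integral: the naive choice $p_e\sim 1/i$ ignores the scale, while $p_e\sim 2^{-j}/i$ is feasible but introduces the additive $\sqrt{j}$ (or $h(2^{-j})$) slack on every edge of the path, and one must argue this slack telescopes into $O(\diam(X))$ rather than accumulating — this is exactly where $|h'(1)|=1$ / the log-concave-type hypothesis and the diameter normalization are used. A second, more mundane obstacle is the $O(n^2\log n)$ running time: a direct implementation of the greedy selection recomputes ball masses repeatedly, so one needs to argue that across the whole recursion each of the $O(n^2)$ distance entries is touched a bounded number of times — e.g. by sorting the $n-1$ distances out of each point once ($O(n^2\log n)$ total) and using the sorted lists to extract balls and update residual masses in amortized linear time per scale, with the total work over all scales telescoping because the pieces at a fixed scale are disjoint. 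I would isolate these two points as separate lemmas (a ``feasibility and value'' lemma and an ``efficient implementation'' lemma) and keep the recursion statement clean.
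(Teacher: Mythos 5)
Your construction is, in outline, the one the paper uses: greedy selection of centers maximizing the mass of a small ball, removal of a larger ball, the averaging observation that the $i$-th center's small ball has mass at most $1/i$ (via disjointness of the small balls within one parent piece), and the comparison of $\alpha^{j}\diam(X)\,h(1/i)$ with the contribution of one annulus to $\int_0^\infty h(\rho(B(x,\eps)))\,d\eps$ at the leaf $x$. The paper packages the partitioning output as a ``labelled net'' and converts it to a chaining tree in a separate step, but that is cosmetic. The value bound and the running-time amortization you sketch are essentially the paper's arguments.

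The genuine gap is the edge-probability assignment, which you yourself flag as the main obstacle but do not resolve. With $p_e \propto 2^{-(j+1)}/i$, feasibility fails: a single parent with $k$ children already contributes $2^{-(j+1)}\sum_{i=1}^k 1/i \approx 2^{-(j+1)}\log k$, and there can be $\Theta(n)$ parents at a given scale, so $\sum_e p_e$ can be $\Theta(n)$ rather than $\le 1/2$. Your proposed rescue --- that the small-ball masses at a scale sum to at most $1$ --- controls $\sum_i \rho(B(x_i,\cdot))$, not $\sum_i 1/i$, so it does not repair this choice of $p_e$ (and the disjointness of small balls across \emph{different} parent pieces at the same scale is itself not guaranteed). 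The paper's fix is to make $p_e$ a \emph{product along the root-to-node path}: $p_e \propto 2^{-m(C)}\prod_{V}(2\sigma(V))^{-2}$ over the ancestors $V$ of the child $C$. Then the tree-wide sum is a convergent branching sum, bounded by $\sum_{k\ge 1}(\pi^2/24)^k < 1/2$ independently of the branching factors, with no per-scale counting needed. The price is paid in the value bound: $h$ of the product expands by sub-multiplicativity into $2\sum_j h(1/\sigma(V_j))$ plus $O(m)\cdot h(1/2)$ terms, and one must check that multiplying these by the geometrically decreasing radii $\alpha^{m(V_{j-1})}\diam(X)$ still telescopes into $O\bigl(\sum_i \alpha^{m(V_{i-1})}\diam(X) h(1/\sigma(V_i)) + \diam(X)\bigr)$; this uses $\sum_{i\ge 0} i\,\alpha^i = O(1)$ and is exactly Lemma~\ref{lem:chainingtree}. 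Without the product structure (or an equivalent telescoping, e.g.\ probabilities proportional to conditional masses of the pieces down the tree), the feasibility constraint $\sum_e p_e \le 1/2$ cannot be met, so this step needs to be supplied before the proof is complete.
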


On the dual side, as far as we are aware there were no rounding strategies to
compute packing trees starting from a measure $\rho$. The previous approaches for rounding \cite{Talagrand87, LT91, DLP12} were based on defining a functional that satisfies the previously mentioned ``super-chaining" inequality and used a greedy partitioning procedure based on the value of this functional. This analysis was rather delicate and somewhat mysterious. Moreover, the corresponding functionals in these instances were themselves solutions to optimization problem on the metric space, so implementing this strategy deterministically was rather slow. In fact, Ding, Lee and Peres \cite{DLP12} showed that using a carefully constructed dynamic program, one can implement the above strategy and compute a packing tree in polynomial time in the input size when the metric space is given as the input. Although, they don't specify a precise bound on the running time, one can directly infer a $O(n^4)$ deterministic running time for building a packing tree; with an additional observation, this can in fact be improved to a $O(n^3\log n)$ bound.%
\footnote{Their running time is $O(n^2)$ times the number of ``distance scales'' in the metric, meaning the number of dyadic intervals $[2^k, 2^{k+1})$ containing at least one distance $d(u,v)$ in the metric. 
    To bound the number of distance scales by $O(n \log n)$, 
    compute a minimum spanning tree $T$ in the complete graph describing the metric. 
    Consider any edge $e=\{v,w\}$ not in the tree, and let $m(v,w)$ denote an edge on the path between $v$ and $w$ in $T$ of maximum length. 
    Then $e$ has length at least the length of $m(v,w)$ since $T$ is an MST, but not more than $n$ times larger. That is, the distance scale of $e$ is in a range of size $O(\log n)$ of the distance scale of $m(v,w)$.
Now consider assigning each non-tree edge $\{v,w\}$ to $m(v,w)$; there are $O(\log n)$ distance scales assigned to each tree edge, so $O(n \log n)$ in total.}

In this work, we revisit the above approach and show that in fact, one can round a probability measure $\rho$ on the metric space to a packing tree with approximately the same value as $\dual_h(\rho, X)$. This has certain advantages over a rounding strategy using functionals as it can be implemented in near linear time in the input size. Moreover, this rounding algorithm is quite similar to the primal rounding algorithm and in our opinion clarifies why such a construction works. In particular, the basic strategy of choosing centers that maximize the measure of a smaller ball remains exactly the same --- one just selects smaller balls to add as children and recurses on them instead, followed by some post-processing.

\begin{theorem}\label{thm:sep}
	There is a deterministic algorithm that given a probability measure $\rho$ on an $n$-point metric space $X$, finds a $\frac1{10}$-packing tree $\CT$ such that $\dual_h(p, X) \lesssim \valsep(\CT)$ in $O(n^2\log n)$ time.  
\end{theorem}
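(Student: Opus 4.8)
The plan is to imitate the greedy ball-partitioning scheme underlying the chaining-tree construction of \thmref{thm:lnet}, modifying it so that at each node we carve out a well-separated family of \emph{small} balls to serve as children (discarding the leftover ``annular'' material) and recurse on those, rather than recursing on a genuine partition; a short post-processing pass then removes degree-one chains and normalizes the labels.

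\textbf{The construction.} Normalize so that $\alpha=\tfrac1{10}$, write $D=\diam(X)$, and assume $X=\supp(\rho)$. The recursive routine takes a subset $V$ together with an incoming scale $\alpha^{j}D\ge \diam(V)$. It picks a label $m=\chi(V)\ge j$; with a child-radius $r\asymp\alpha^{m+1}D$ and a removal-radius $R\asymp\alpha^{m}D$ chosen so that $R\ge 2r$ and $R-2r\ge \tfrac1{10}\alpha^{m}D$, it runs the greedy loop: while $V\neq\emptyset$, select the remaining point $y_i\in V$ maximizing $\rho(B(y_i,r))$, declare $C_i:=B(y_i,r)\cap V$ a child, and delete $B(y_i,R)\cap V$ from $V$. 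By construction the children $C_i$ are pairwise disjoint, pairwise at distance $\ge \tfrac1{10}\alpha^{m}D$, and of diameter $\le\alpha^{m+1}D$, so the metric axioms of a $\tfrac1{10}$-packing tree are met. We recurse on each $C_i$ with incoming scale $\alpha^{m+1}D$, stopping at singletons (the leaves). The label $m=\chi(V)$ is chosen \emph{adaptively}: we jump directly to the finest scale at which $V$ first splits into at least two pieces, so that the trivial sole-child steps---which contribute $h(1)=0$ anyway---are never materialized; this keeps the sequence of scales encountered along any leaf path under control. Finally, post-process by contracting residual degree-one nodes and renormalizing $\chi$; this is cheap and only affects constants.

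\textbf{The structural estimate.} Fix a leaf $x$ and let $X=V_0\supsetneq V_1\supsetneq\cdots\supsetneq V_t=\{x\}$ be its path from the root, with labels $m_i=\chi(V_i)$. The contribution of $V_i$ to $\val_h(\CT)$ along this path is $\alpha^{m_i}D\cdot h(1/\deg(V_i))$, so the crux is a lower bound on $\deg(V_i)$ in terms of the $\rho$-mass of balls around $x$. The mechanism is the greedy argmax rule: $V_{i+1}=B(y,r_i)\cap V_i$ for a greedily chosen $y$, and since $x$ was still present when $y$ was selected, $\rho(B(y,r_i))\ge\rho(B(x,r_i))$; moreover the children-masses $\rho(B(y_1,r_i))\ge\rho(B(y_2,r_i))\ge\cdots$ are nonincreasing and, being supported on disjoint balls, have total mass $\le\rho(V_i)$, while the $\deg(V_i)$ deleted balls $B(y_l,R_i)$ cover $V_i$. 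Combining these facts---after passing to the measure $\rho_{V_i}$ conditioned on $V_i$, so that $\rho(V_i)$ becomes $1$---yields an estimate of the shape
\[
  \deg(V_i)\ \gtrsim\ \frac{1}{\rho_{V_i}\!\bigl(B(x,\,c\,\alpha^{m_i}D)\bigr)}
\]
for an absolute constant $c$. This is precisely the role played in earlier treatments \cite{Talagrand87,LT91,DLP12} by a ``magic'' functional satisfying the super-chaining inequality; here the (conditional) measure plays that role directly, which both clarifies the construction and makes it fast.

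\textbf{Telescoping, conclusion, and the main obstacle.} Using only that $h$ is decreasing one has, for any measure $\nu$, the dyadic lower bound $\int_0^\infty h(\nu(B(x,\eps)))\,d\eps\ge(1-\alpha)\sum_k \alpha^kD\,h(\nu(B(x,\alpha^kD)))$; feeding the degree estimate into this sum scale by scale, and using that the labels $m_i$ along the path are consecutive except for jumps---which occur only once $\diam(V_{i+1})$ has already dropped to the new scale, in which case $B(x,\alpha^kD)\supseteq V_{i+1}$ for every skipped $k$ so those scales carry no mass deficit---telescopes the per-node contributions into a bound $\val_h(\CT)\gtrsim\dual_h(\rho,X)$, valid because the argument applies to \emph{every} leaf path simultaneously (here the slow growth of $h$ for $h$ of log-concave type turns the constant-radius shift $c$ into a constant factor, and the monotonicity of the $\rho_{V_i}$ along the path handles the passage back to $\rho$). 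The reverse inequality $\val_h(\CT)\lesssim\primal_h(X)$ is automatic from \lref{lem:weak-comb-dual}. For the running time, precompute each point's sorted list of distances ($O(n^2\log n)$); at each branching node the greedy loop evaluates ball masses, finds successive maximizers, and deletes balls in $O(|V|^2)$ time, while scale jumps are located by binary search in the sorted lists so no work is wasted on non-branching scales, and since the $V$'s at a fixed depth are disjoint and there are $O(n)$ branching nodes, a telescoping over $\sum|V|^2$ (as for \thmref{thm:lnet}) gives $O(n^2\log n)$ overall. The technically delicate heart is the telescoping step: unlike the primal rounding, where it suffices to \emph{upper}-bound the value of the single worst leaf path, here one must \emph{lower}-bound the value of \emph{every} leaf path, and a naive per-leaf comparison to $\int_0^\infty h(\rho(B(x,\eps)))\,d\eps$ is false (a point isolated early by the greedy may have a cheap path despite a large integral); making the degree estimate robust---via the adaptive choice of scales, the treatment of discarded annular points, and the bookkeeping between $\rho$ and the conditional measures $\rho_{V_i}$---so that the telescoped quantity still comes out to $\dual_h(\rho,X)$ with no additive loss is exactly what the earlier functional-based arguments were implicitly accomplishing.
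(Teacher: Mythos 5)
There is a genuine gap: the ``degree estimate'' on which your telescoping rests is false for the construction as you describe it, and the two devices the paper uses to repair it are absent from your proposal. Concretely, take a node $V$ whose mass is almost entirely concentrated in a single small ball around some point $z$, together with one far-away atom $x$ of tiny mass $\epsilon$. Your greedy loop makes both a child, so $\deg(V)=2$ and the path through $x$ collects only $\alpha^{m}\diam(X)\,h(1/2)$ at $V$, while $1/\rho_{V}(B(x,c\,\alpha^{m}\diam(X)))=1/\epsilon$ is unbounded; thus $\deg(V_i)\gtrsim 1/\rho_{V_i}(B(x,c\,\alpha^{m_i}\diam(X)))$ fails, and with it the per-leaf bound comparing the path value to $\int_0^\infty h(\rho(B(x,\varepsilon)))\,d\varepsilon$. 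You correctly flag that the naive per-leaf comparison is false for your tree, but you then defer the repair to ``what the earlier functional-based arguments were implicitly accomplishing,'' which is exactly the content that has to be proved. Your adaptive scale choice does not help here: it only skips scales at which the set fails to split combinatorially (those contribute $h(1)=0$ anyway), whereas the problematic case is one where the set does split but all children except one are negligible in mass.

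The paper's algorithm fixes this with two specific mechanisms, and then the per-leaf comparison \emph{does} go through. First, after carving out the balls it sorts them by mass and keeps only the top $L$, where $L$ is the first index with $\rho(A_i)/\rho(R)\ge \frac{6}{\pi^2 i^2}$; this prunes the leaf set so that every surviving child has relative mass at least $\frac{1}{2\deg(R)^2}$, which is what converts $h(\rho(V_{\ell})/\rho(V_{\ell-1}))$ into $O\bigl(h(1/\deg(V_{\ell-1}))+h(1/2)\bigr)$ (\clmref{clm:sep2}). Second, when even a kept small ball $B_l$ carries far less mass than the larger ball $A_l$ containing it (the test $h(\rho(B_l)/\rho(A_l))\ge 4\alpha^{-2}h(\rho(A_l)/\rho(R))$), the algorithm does not branch but descends into $A_l$ as a sole child; the costs accumulated along such zoom-in chains form a geometric series that is charged to the next branching node via the potential argument of \clmref{clm:sep1}. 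Neither the mass-based truncation nor the sole-child case with its charging scheme appears in your proposal, so the value bound is not established. Two smaller points: the analysis inevitably incurs an additive $\diam(X)h(1/2)$ loss, which is why the paper dispatches the regime $\dual_h(\rho,X)\lesssim \alpha^{-2}\diam(X)h(1/2)$ with a trivial two-leaf tree rather than achieving ``no additive loss''; on the other hand, your verification of the packing-tree axioms and the $O(n^2\log n)$ running-time accounting are essentially in line with the paper's.
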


\subsubsection{Proof of the Combinatorial Min-Max Theorem (\thmref{thm:constructive})}

Of course, without a way to compute measures which have almost optimal values for $\primal_h(\rho, X)$ and $\dual_h(\rho, X)$, the above rounding algorithms would not have been very useful. Fortunately, we can obtain almost optimal primal and dual measures $\mu$ and $\nu$ by solving the saddle point formulation \eqref{eq:saddle-point} of $\primal_h(X)$.  Plugging the measure $\mu$ in \thmref{thm:lnet} gives us a chaining tree $\CCT^*$ such that $\valsep(\CCT^*) \lesssim \primal(\mu, X) \asymp \primal_h(X)$.

On the dual side, the measure $\nu$ is not enough as it might not be a good solution to the approximate dual $\dual_h(X)$. However, using \thmref{thm:dual-to-simple}, one can find a set $S \subset X$ such that the probability measure $\nu_S$ obtained by conditioning $\nu$ on the set $S$ satisfies $\dual_h(\nu_S, X) \asymp \primal_h(X)$. Plugging the measure $\nu_S$ in \thmref{thm:sep}, gives us a packing tree $\CT^*$ satisfying $\dual_h(\nu_S, X) \lesssim \valsep(\CT^*)$. Combining the two yields that 
\begin{equation}\label{eq:gen-strong-duality}
 \valsep(\CCT^*) \lesssim \primal_h(X) \asymp \dual_h(\nu_S, X) \lesssim \valsep(\CT^*).
\end{equation}

Recall that the weak duality relation \eqref{eq:gen-weak-duality} between chaining and packing trees implies the reverse inequality for $\valsep(\CT) \lesssim \valsep(\CCT)$ for any chaining tree $\CCT$ and any packing tree $\CT$. Thus, \eqref{eq:gen-strong-duality} gives us the combinatorial min-max statement given by \thmref{thm:constructive}. Moreover, this can be made algorithmic by solving the saddle point formulation and using the algorithm to find the set $S$ given by \thmref{thm:dual-to-simple}. The details for solving the saddle point formulation are given in \appref{sec:alg} and the algorithm to find the set $S$ is presented in the proof of \thmref{thm:dual-to-simple}.

\subsection{Organization}

In \secref{sec:prelims}, we list some basic notation as well as the main
useful properties of chaining functionals of log-concave type. In
\secref{sec:convex}, we simplify the dual program $\realdual_h(X)$, proving
the main inequalities relating it to the entropic and simplified duals
$\ent_h(X)$ and $\dual_h(X)$ respectively. In \secref{sec:trees}, we give
near-linear time rounding algorithms from measures to chaining trees and packing
trees. In \secref{sec:gordon}, we give our deterministic construction of Johnson-Lindenstrauss
projections achieving Gordon's bound. In \appref{sec:alg}, we show how to use a
black-box solver to compute nearly-optimal primal and dual measures for the
saddle-point formulation of $\primal_h(X)$.

\section{Preliminaries}
\label{sec:prelims}

\paragraph{Notation.}

 Throughout this paper, $\log$ denotes the natural logarithm unless the base is explicitly mentioned. We use $[k]$ to denote the set $\{1,2,\dotsc, k\}$. For a vector $z \in \BR^n$, we will use $z_i$ or $z(i)$ interchangeably to denote the $i$-th coordinate of $z$. Given a probability measure $\mu$ over a set $X$, we use $\BE_{x \sim \mu}[f(x)]$ to denote the expectation of $f(x)$ where $x$ is sampled from $\mu$.
 
\subsection{Properties of Chaining Functionals} 
\label{sec:derivative}

Recall that we work with a chaining functional $h:(0,1] \rightarrow \R_+$
defined by $h(p) = F^{-1}(p)$, where $F(s) = \int_s^\infty f(x)dx$ and where $f$
is non-decreasing and continuous probability density on $\R_+$. We assume
throughout the rest of the paper that $h$ is of log-concave type, i.e., that $f$
is log-concave.   

The fundamental property of such functionals, that we make extensive use of, is
the following:  

 \begin{restatable}{proposition}{submultiplicative}
 	\label{prop:submultiplicative}
 	For a chaining functional $h$ of log-concave type, 
 	 $$h(ab)\leq h(a)+h(b) ~\text{ for }~ a, b\in (0,1].$$
 \end{restatable}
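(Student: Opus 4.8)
The plan is to work with the complementary CDF $F(s) = \int_s^\infty f(x)\,dx$ and the density $f$ directly, since $h = F^{-1}$ and the claimed inequality $h(ab) \le h(a) + h(b)$ is, after substituting $a = F(s)$, $b = F(t)$ and applying the (decreasing) function $F$, equivalent to
\[
F(s+t) \ge F(s)\cdot F(t) \qquad \text{for all } s,t \ge 0.
\]
So the real statement to prove is that the tail function $F$ is \emph{supermultiplicative} on $\R_+$, given that $f$ is a log-concave, non-increasing probability density. First I would record the boundary behavior: $F(0) = 1$ and $F$ is non-increasing with $F \to 0$, so the inequality is trivially true when $s = 0$ or $t = 0$.

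The key step is to prove supermultiplicativity of $F$. I would fix $t \ge 0$ and consider the function $\phi(s) \defeq F(s+t) - F(s)F(t)$, noting $\phi(0) = 0$, and try to show $\phi(s) \ge 0$ for all $s \ge 0$. Differentiating, $\phi'(s) = -f(s+t) + f(s)F(t)$, so it would suffice to show $f(s+t) \le f(s) F(t)$ — but that is false in general (e.g. for $t$ small). Instead I would use log-concavity more globally. The clean approach: log-concavity of $f$ implies that the tail $F$ is itself log-concave (this is a standard fact — the survey by Bagnoli–Bergstrom, or one can prove it directly: $F$ is log-concave iff $f(s)^2 \ge -f'(s)F(s)$ pointwise in the smooth case, which follows from log-concavity of $f$ together with monotonicity). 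Once $\log F$ is concave on its support with $\log F(0) = 0$, concavity gives for $s, t \ge 0$ that $\log F$ lies above the chord, and in particular $\log F(s+t) \ge$ (an appropriate combination); more precisely, a concave function $\psi$ with $\psi(0) \le 0$ is superadditive: $\psi(s+t) \ge \psi(s) + \psi(t)$, since $\psi(s) \ge \frac{t}{s+t}\psi(0) + \frac{s}{s+t}\psi(s+t)$ and symmetrically, and adding these two inequalities yields $\psi(s) + \psi(t) \ge \psi(0) + \psi(s+t) \le \psi(0) \cdot(\cdots)$; being careful with the algebra, superadditivity of a concave function through a point $\le 0$ at the origin is exactly what is needed. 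Applying this with $\psi = \log F$ gives $\log F(s+t) \ge \log F(s) + \log F(t)$, i.e. $F(s+t) \ge F(s)F(t)$, as desired.

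The main obstacle I anticipate is handling the support of $f$ carefully: if $f$ has bounded support $[0, L]$ then $F(s) = 0$ for $s \ge L$, so $\log F = -\infty$ there, and one must check the inequality $F(s+t) \ge F(s)F(t)$ still holds (it does, trivially, when the right side is $0$) and that the concavity/superadditivity argument is applied only on the open interval where $F > 0$, then extended by a limiting/monotonicity argument to the boundary. A second minor point is the smoothness needed to write $f' $ — since $f$ is only assumed continuous and log-concave, I would either invoke that log-concave functions are differentiable a.e. and absolutely continuous on the interior of their support, or, cleaner, avoid derivatives entirely and deduce log-concavity of $F$ from log-concavity of $f$ via the integral characterization (for $u < v$ and $\lambda \in (0,1)$, bound $F(\lambda u + (1-\lambda)v) = \int_0^\infty f(\lambda u + (1-\lambda) v + r)\,dr$ and use log-concavity of $f$ inside the integral together with a change of variables / Prékopa-type argument). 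Once log-concavity of $F$ is in hand, the superadditivity-of-concave-functions step and the translation back to $h$ are both routine.
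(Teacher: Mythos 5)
Your approach is sound and is essentially the paper's proof in mirror image: the paper sets $\phi(t)=h(e^{-t})$, proves $\phi$ is concave by showing $F/f$ is decreasing (which is exactly log-concavity of $F$, proved via the integral representation $F(x)/f(x)=\int_0^\infty f(x+t)/f(x)\,dt$ and log-concavity of $f$), and then uses concavity plus $\phi(0)=0$ to get subadditivity of $\phi$; you prove log-concavity of $F$ directly and apply the same subadditivity trick to $\log F$. So the mathematical core is identical.

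However, you have the direction of the key intermediate inequalities reversed, and you should fix this before writing it up. Substituting $a=F(s)$, $b=F(t)$ and applying the decreasing function $F$ to $h(ab)\le h(a)+h(b)$ flips the inequality, so the equivalent statement is $F(s+t)\le F(s)F(t)$ (\emph{sub}multiplicativity of the tail), not $F(s+t)\ge F(s)F(t)$; the latter is false, e.g.\ for the Gaussian tail $F(s)\approx e^{-s^2/2}$ one has $F(s+t)=F(s)F(t)e^{-st}\le F(s)F(t)$. Correspondingly, a concave function $\psi$ with $\psi(0)=0$ is \emph{sub}additive, not superadditive --- your own chord computation, $\psi(s)+\psi(t)\ge\psi(0)+\psi(s+t)$, says exactly this. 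These two reversals cancel: applying subadditivity of the concave function $\log F$ (with $\log F(0)=0$) yields $F(s+t)\le F(s)F(t)$, which is precisely the corrected target, so the argument closes once the labels are fixed. Your remarks on bounded support and on avoiding pointwise derivatives of $f$ are both legitimate and handled correctly (the inequality is trivial where $F$ vanishes).
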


We note that the above property appears in~\cite{LT91} as the base assumption
for the chaining functionals they consider. The above shows that this condition
is very natural and applies to a wide variety of functionals.  

When working on a metric space $(X,d)$ with a chaining functional $h$, we
observe the following rescaling symmetry: for any constant $\beta > 0$, if we
replace $h$ by $\beta h$ (equivalently, the density function $f$ is replaced by
$z \to \beta f(\beta z)$) and $d(u,v)$ by $d(u,v)/\beta$ for all $u,v \in X$,
the values of the various quantities we consider remain unaffected.  In particular, if we
have an underlying process $(Z_x)_{x \in X}$ amenable to $X$ and $f$ as
in~\eqref{eq:tail-assumption}, the condition $\Pr[|Z_{x_1} - Z_{x_2}| \geq
d(x_1,x_2)s] \leq F(s)$ is identical for both scalings, and the values of the
various programs and trees we consider remain unchanged.

So from now on, we make the convenient choice that $f(0) = 1$.  This implies
that the (left) derivative $h'(1) = -1/f(0) = -1$. We maintain this normalization for the
remainder of the paper.

Given this normalization, the following useful bound is easy to show (we postpone the details to the appendix).

\begin{restatable}{proposition}{derivative}
\label{prop:derivative}
	For every $a \in (0,1]$, we have that
	 \[ -1 \le ah'(a) \le 0 \text{ and } h(a) \leq \log(1/a).\]
\end{restatable}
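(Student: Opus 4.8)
The plan is to work with the survival function $F(s)=\int_s^\infty f(x)\,dx$, for which $h=F^{-1}$, and to reduce both claims to statements about the Mills-type ratio $\phi(s):=F(s)/f(s)$ on the support of $f$. From the identity $h'(p)=-1/f(h(p))$ already recorded in the excerpt, setting $s=h(a)$ (so that $a=F(s)$) gives
\[
a\,h'(a)=-\frac{a}{f(h(a))}=-\frac{F(s)}{f(s)}=-\phi(s).
\]
Since $F,f\ge 0$ this is immediately $\le 0$, and the remaining bound $a h'(a)\ge -1$ is exactly the assertion $\phi(s)\le 1$. For the second claim, note that since $F$ is strictly decreasing, $h(a)\le \log(1/a)$ is equivalent to $F(s)\le e^{-s}$ for all $s\ge 0$. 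So the whole proposition follows from: (i) $\phi$ is non-increasing with $\phi(0)=1$; and (ii) a one-line integration argument.

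The heart of the matter is (i). First, $\phi(0)=F(0)/f(0)=1/1=1$, which uses precisely that $f$ is a probability density (so $F(0)=\int_0^\infty f=1$) together with the normalization $f(0)=1$. For monotonicity, write $F(s)=\int_0^\infty f(s+t)\,dt$, so that $\phi(s)=\int_0^\infty \frac{f(s+t)}{f(s)}\,dt$ wherever $f(s)>0$. Log-concavity of $f$ means $\log f$ is concave, hence its (right-)derivative is non-increasing; therefore for each fixed $t\ge 0$ the function $s\mapsto \log\frac{f(s+t)}{f(s)}=\int_0^t (\log f)'(s+\tau)\,d\tau$ is non-increasing in $s$, and hence so is $s\mapsto \frac{f(s+t)}{f(s)}$. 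Integrating over $t\ge 0$ shows $\phi$ is non-increasing, so $\phi(s)\le \phi(0)=1$, which gives $-1\le a h'(a)\le 0$ over all of $(0,1]$ (at the endpoint $a=1$ this is the left derivative, matching $h'(1)=-1$). Equivalently, one may simply invoke the classical fact that the survival function of a log-concave density is itself log-concave, i.e.\ the distribution has increasing failure rate.

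For the bound $h(a)\le\log(1/a)$, use $\phi\le 1$ once more: on the support, $(\log F)'(s)=F'(s)/F(s)=-f(s)/F(s)=-1/\phi(s)\le -1$. Integrating from $0$ and using $F(0)=1$ gives $\log F(s)\le -s$, i.e.\ $F(s)\le e^{-s}$; applying $F^{-1}$ (which is decreasing) to $a=F(s)$ yields $s=h(a)\le\log(1/a)$, as claimed. The only genuinely delicate point is the monotonicity of $\phi$, where the one mild nuisance is that $\log f$ need not be everywhere differentiable; this is handled by working with one-sided derivatives (which exist and are monotone for concave functions) throughout, or by citing log-concavity of $F$ directly. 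Everything else is a routine manipulation.
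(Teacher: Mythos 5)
Your proof is correct, but it takes a different route from the paper's. The paper derives both bounds from sub-multiplicativity (\propref{prop:submultiplicative}): writing $h(a(1-\epsilon)) \le h(a) + h(1-\epsilon)$ and passing to the difference quotient gives $ah'(a) \ge h'(1) = -1$ directly, and then $h'(a) \ge -1/a$ is integrated exactly as you do. You instead bypass sub-multiplicativity entirely and work at the level of the density, identifying $ah'(a) = -F(s)/f(s)$ at $s = h(a)$ and proving that this Mills ratio is non-increasing with value $1$ at $s=0$. It is worth noting that the monotonicity of $F/f$ under log-concavity is precisely the computation the paper uses to prove \propref{prop:submultiplicative} itself (via $F(x)/f(x) = \int_0^\infty f(x+t)/f(x)\,dt$), so the two arguments rest on the same mechanism; your version just applies it once, directly to the quantity $ah'(a)$, rather than first packaging it as sub-additivity of $h(e^{-t})$ and then unpacking it via a limit. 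What your route buys is a self-contained proof that makes the probabilistic meaning transparent ($-ah'(a)$ is the reciprocal failure rate, and log-concave densities have increasing failure rate); what the paper's route buys is reuse of a lemma it needs anyway. Your handling of the minor issues (one-sided derivatives of $\log f$, the identity only being needed on the support of $f$) is adequate; alternatively the four-point inequality $f(b)f(c) \ge f(a)f(d)$ that the paper invokes avoids differentiating $\log f$ altogether.
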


\section{Dual simplifications}
\label{sec:convex}

In this section, we provide the main arguments that allow for rounding a solution to $\realdual_h(X)$ to a solution to the simplified dual, via the entropic dual, as described in \secref{sec:simplifyingdual}.

\paragraph{Proof of \lref{lem:guess-mu}.}

Our goal is to prove that taking $\mu=\nu$ in \eqref{eq:real-dual-value} does not cause too much error. 
We will prove this on a ``per scale'' basis:

\begin{lemma}\label{lem:scale}
    For any probability measures $\mu$ and $\nu$ on $X$, and any $\eps >0$, we have 
    \[ \int_X h(\nu(B(x,2\eps)) d\nu(x) \le \int_X h(\mu(B(x,\eps))) d\nu(x) + \Ch/e.\]
\end{lemma}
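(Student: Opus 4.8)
The plan is to reformulate the inequality in graph-theoretic language as the paper suggests, and then apply a fractional/weighted version of Hall's theorem. Set up the bipartite graph $G = (X_1 \cup X_2, E)$ where $X_1, X_2$ are copies of $X$ and $x_1 \sim x_2$ iff $d(x_1,x_2) \le \eps$. For $x \in X$, write $a(x) \defeq \mu(B(x,\eps))$ (mass under $\mu$ in the $\eps$-ball, i.e.\ on the $X_1$-side neighborhood of $x_2$) and $b(x) \defeq \nu(B(x,2\eps))$. The first key observation is that since any two points within distance $\eps$ of a common point $x$ are within distance $2\eps$ of each other, $B(x,2\eps) \supseteq \bigcup_{y \in B(x,\eps)} B(y,\eps)$; more usefully, the two-hop $\nu$-neighborhood of $x_2$ in $G$ is contained in $B(x,2\eps)$. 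So it suffices to compare $\int_X h(b(x))\,d\nu(x)$ with $\int_X h(a(x))\,d\nu(x)$ where $b(x)$ is the $\nu$-mass of the two-hop neighborhood.

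Next I would fix a small target and use a Hall-type transportation argument. The idea: construct a fractional matching / flow in $G$ that routes each unit of $\nu$-mass at $x_2 \in X_2$ to $\mu$-mass on the $X_1$ side within its neighborhood, in such a way that the total $\mu$-mass "reachable" controls $a$. Concretely, one wants to show that for each $x$, one can find a point $y_1 \in X_1$ with $d(x,y_1) \le \eps$ such that $a(y) = \mu(B(y,\eps))$ is comparable to $b(x) = \nu(\text{2-hop nbhd of }x)$ — intuitively, because the $\mu$-ball around the right representative $y$ contains a lot of the relevant mass. The clean way to make "comparable" work with the $\Ch/e$ additive slack is to use the defect/weighted Hall's theorem to build a measure-preserving coupling $\pi$ on $X \times X$ supported on pairs at distance $\le \eps$, with $X_2$-marginal $\nu$ and such that the $X_1$-marginal $\sigma$ satisfies $\sigma \le \mu$ pointwise (this is exactly Hall's condition: for every set $A \subseteq X$, $\nu(\{x : B(x,\eps) \subseteq A\}) \le \mu(A)$ — wait, the correct direction is $\nu(A) \le \mu(N(A))$ where $N(A) = \bigcup_{x\in A} B(x,\eps)$, which holds because $\mu$ is a probability measure and $N(A) \supseteq A$... need to be careful here). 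Then for $\pi$-a.e.\ pair $(x,y)$ with $d(x,y)\le\eps$ we have $B(x,2\eps) \supseteq B(y,\eps)$, hence $b(x) = \nu(B(x,2\eps)) \ge \sigma(B(y,\eps))$, and we'd want $\ge$ something like $\mu(B(y,\eps)) = a(y)$ up to the defect.

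Then I would finish with a convexity/entropy estimate. Since $h$ is decreasing, $b(x) \ge (\text{something})$ gives $h(b(x)) \le h(\text{something})$; the additive $\Ch/e$ term should come from the bound $ah'(a) \ge -1$ (Proposition~\ref{prop:derivative}), i.e.\ $|h(a) - h(a')| \le \max(a,a')\cdot\sup|h'| \cdot \ldots$ — more precisely the function $a \mapsto h(a)$ has the property that replacing $a$ by a slightly smaller mass costs at most a bounded amount, and integrating $\int_X (h(a(x)) - h(b(x)))_+ d\nu(x)$ against the coupling and using $\max_a |a h'(a)| \le 1$ yields $\le 1/e$ (the $1/e$ presumably coming from $\max_{t>0} t\log(1/t)$-type optimization, e.g.\ $\int h(a)-h(b)$ bounded by $\sup_{0<b\le a\le 1}(h(b)-h(a))\cdot(\text{mass where this happens})$ and the worst case being a star). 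I expect the main obstacle to be getting the Hall condition stated in exactly the right direction and with the right "defect" so that the leftover mass contributes precisely the $\Ch/e = 1/e$ additive error and not something larger; the measure-theoretic version of Hall's theorem (for general, possibly non-atomic $\mu,\nu$) and checking its hypotheses carefully is the delicate point, whereas the convexity estimate using $|ah'(a)|\le 1$ and the geometric containment $B(y,\eps)\subseteq B(x,2\eps)$ are routine.
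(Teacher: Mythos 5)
Your setup (the bipartite graph, the containment $N^2(x) \subseteq B(x,2\eps)$, and the guess that the $1/e$ comes from $\max_\beta \beta\log(1/\beta)$) matches the paper, but the central step is missing and the sketch as written would not go through. First, the Hall condition you need is $\mu(A) \le \nu(N(A))$ for $A \subseteq X_1$ (so that $\mu(N(x)) \le \nu(N^2(x))$ and hence $h(\nu(B(x,2\eps))) \le h(\mu(B(x,\eps)))$ pointwise, $h$ being decreasing); this condition simply need not hold, and your parenthetical attempt to justify it ($N(A)\supseteq A$, $\mu$ a probability measure) compares the wrong measures. You have the directions tangled: a coupling with $X_1$-marginal $\sigma \le \mu$ gives $\sigma(B(y,\eps)) \le \mu(B(y,\eps))$, which is the wrong inequality for lower-bounding $b(x)$ by $a(y)$. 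Second, your plan compares $h(b(x))$ with $h(a(y))$ for a matched partner $y \ne x$, so integrating yields $\int h(a(y))\,d\sigma(y)$ for the $X_1$-marginal $\sigma$ of the coupling, whereas the lemma's right-hand side is $\int h(\mu(B(x,\eps)))\,d\nu(x)$ evaluated at the \emph{same} point $x$ against $\nu$; these are different quantities and there is no reason one bounds the other.

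The missing idea is the layered decomposition from \propref{clm:match} (the principal sequence of the transversal matroid): a chain $\emptyset = S_0 \subset \cdots \subset S_k = X_1$ with ratios $0 < \beta_1 < \cdots < \beta_k$ such that $\beta_i\mu(A) \le \nu(N(A)\setminus N(S_{i-1}))$ for $A \subseteq X_1\setminus S_{i-1}$ and $\beta_i\mu(S_i\setminus S_{i-1}) = \nu(N(S_i)\setminus N(S_{i-1}))$. This is what a single ``defect'' version of Hall's theorem cannot deliver: for an individual $x$ with $\nu(N^2(x))$ much smaller than $\mu(N(x))$, the penalty $h(\nu(N^2(x))) - h(\mu(N(x))) \le h(\tilde\beta_i) \le \log(1/\tilde\beta_i)$ can be large, and the only reason the total error is bounded is that the $\nu$-mass of the $i$-th layer is exactly $\beta_i\mu(S_i\setminus S_{i-1})$ --- i.e.\ badly matched layers carry proportionally little $\nu$-mass --- so the error sums to $\sum_i \beta_i\mu(S_i\setminus S_{i-1})\log(1/\beta_i) \le (1/e)\mu(X_1) = 1/e$. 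Also note the paper's pointwise step uses sub-multiplicativity $h(ab)\le h(a)+h(b)$ together with $h(\beta)\le\log(1/\beta)$, not a first-order estimate via $|ah'(a)|\le 1$ directly; that part of your sketch is repairable, but without the principal-sequence structure the argument does not close.
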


\lref{lem:guess-mu} follows easily from this:
\begin{align*}
      \int_X \int_0^\infty h(\nu(B(x,\eps)))  d\eps d\nu(x) 
                &\le  \min_{\mu}\int_X \int_0^{\diam(X)} \Big( h(\mu(B(x,\eps/2))) + 1/e\Big)  d\eps d\nu(x)\\
                &\le  2\min_{\mu}\int_X \int_0^{\infty} h(\mu(B(x,\eps)))d\eps d\nu(x) + \diam(X)/e.
\end{align*}

\begin{proof}[Proof of \lref{lem:scale}]
    We first recast the statement in graph theoretic terms. Let us define a bipartite graph on the vertex set $X_1 \cup X_2$ where each $X_i$ for $i \in [2]$ is a copy of the index set $X$. We add an edge between two vertices $x_1 \in X_1, x_2 \in X_2$ if $d(x_1,x_2) \le \eps$ -- note that every vertex has an edge incident to it. Define the weight of a vertex $x \in X_i$ as $\mu(x)$ and the weight of a vertex $x \in X_2$ as $\nu(x)$. Let $E$ denote the set of edges in the graph, let $N(S)$ be the neighbors of a subset of the vertices $S \subset X_1 \cup X_2$ and let $N^2(S) = N(N(S))$. For brevity, we will write $N(x)$ instead of $N(\{x\})$ for singleton sets. We also use $\partial(x)$ to denote the set of edges incident on a vertex $x \in X_1 \cup X_2$. With this setup, the statement we want to prove is
    \begin{align}\label{eqn:theta}
    \ \sum_{x \in X_2} \nu(x) h(\nu(N^2(x))) \le  \sum_{x \in X_2} \nu(x) h(\mu(N(x)) + \Ch/e.
    \end{align}

To prove the above, we will use the following structural result which is a consequence of the theory of principal sequences of matroids~\cite{Fujishige2009}, applied to transversal matroids; 
we include a self-contained proof in \appref{sec:hall}.

\begin{restatable}{proposition}{matching}
  \label{clm:match}
  There exist sequences $\emptyset=S_0 \subset S_1 \subset \cdots \subset S_k = X_1$ and $0 < \beta_1 < \beta_2 < \cdots < \beta_k$, such that 
    \begin{enumerate}
        \item $\beta_i\mu(S_i \setminus S_{i-1}) = \nu( N(S_i) \setminus N(S_{i-1}))$ for all $i \in [k]$.
        \item For all $i \in [k]$ and $A \subseteq X_1 \setminus S_{i-1}$, we have that $\beta_i \mu(A)\le \nu(N(A) \setminus N(S_{i-1}))$.
    \end{enumerate}
\end{restatable}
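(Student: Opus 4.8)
\textbf{Proof plan for Proposition~\ref{clm:match}.}

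The plan is to derive the claimed sequence from the principal partition / principal sequence of the transversal matroid $M$ on ground set $X_1$ defined by the bipartite graph $(X_1 \cup X_2, E)$ with capacities, where a set $A \subseteq X_1$ is ``independent-like'' to the extent that it can be matched into $X_2$. Concretely, I would work with the submodular function $f(A) \defeq \nu(N(A))$ on subsets $A \subseteq X_1$ (this is monotone and submodular since $N(A \cup B) \cup N(A \cap B) \subseteq N(A) \cup N(B)$ and $\nu$ is a measure), together with the modular function $\mu$. For a parameter $\beta > 0$, consider minimizing $f(A) - \beta \mu(A)$ over $A \subseteq X_1$; by submodularity of $f$ and modularity of $\mu$, the function $A \mapsto f(A) - \beta\mu(A)$ is submodular, so it has a unique maximal minimizer $S(\beta)$, and these minimizers are nested: $S(\beta) \subseteq S(\beta')$ whenever $\beta \le \beta'$. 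As $\beta$ ranges over $(0,\infty)$, $S(\beta)$ takes finitely many values $\emptyset = S_0 \subsetneq S_1 \subsetneq \cdots \subsetneq S_k = X_1$ (the last equals $X_1$ for $\beta$ large since $f$ is bounded and $\mu > 0$ on its support; points of zero $\mu$-mass can be absorbed into $S_1$), and there are breakpoints $0 < \beta_1 < \cdots < \beta_k$ where $S(\beta)$ jumps from $S_{i-1}$ to $S_i$.

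The two properties then follow from the optimality conditions at the breakpoints. First I would show property~2: for $\beta$ slightly below $\beta_i$ the maximal minimizer is $S_{i-1}$, so for any $A$ with $S_{i-1} \subseteq A$ we have $f(S_{i-1}) - \beta\mu(S_{i-1}) \le f(A) - \beta\mu(A)$; taking $\beta \uparrow \beta_i$ by continuity gives $f(A) - f(S_{i-1}) \ge \beta_i(\mu(A) - \mu(S_{i-1}))$, i.e. $\nu(N(A) \setminus N(S_{i-1})) \ge \beta_i \mu(A \setminus S_{i-1})$ (using $N(S_{i-1}) \subseteq N(A)$ and monotonicity); replacing $A$ by $A \cup S_{i-1}$ for arbitrary $A \subseteq X_1 \setminus S_{i-1}$ gives exactly property~2. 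For property~1, I would use that at $\beta = \beta_i$ both $S_{i-1}$ and $S_i$ are minimizers of $f - \beta_i\mu$, hence $f(S_i) - \beta_i\mu(S_i) = f(S_{i-1}) - \beta_i\mu(S_{i-1})$, which rearranges to $\beta_i\mu(S_i \setminus S_{i-1}) = f(S_i) - f(S_{i-1}) = \nu(N(S_i)) - \nu(N(S_{i-1})) = \nu(N(S_i) \setminus N(S_{i-1}))$, again using monotonicity of $N$. The strict monotonicity $\mu(S_i\setminus S_{i-1})>0$ (so that $\beta_i$ is genuinely finite and the $\beta_i$ are strictly increasing) comes from the fact that the jump set $S_i \setminus S_{i-1}$ must have positive $\mu$-mass, otherwise it would already be included in $S_{i-1}$ as part of the maximal minimizer.

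The main obstacle I anticipate is not any single inequality but getting the structural setup exactly right: establishing that the minimizers $S(\beta)$ of $f - \beta\mu$ are genuinely nested and that one may consistently choose the maximal (or minimal) minimizer at each $\beta$, handling the boundary cases (points $x$ with $\mu(x) = 0$, which can be freely placed, and ensuring $S_k = X_1$ and $\beta_k < \infty$), and making the continuity argument ($\beta \uparrow \beta_i$) rigorous — this is where one must be careful that the relevant minimizer is $S_{i-1}$ on the whole half-open interval up to $\beta_i$ and $S_i$ exactly at $\beta_i$. All of this is standard principal-partition theory (the reference~\cite{Fujishige2009} is exactly for this), so in the appendix I would either cite it directly or reprove the nestedness of submodular-function minimizers from scratch via the standard uncrossing argument ($f(A \cap B) + f(A \cup B) \le f(A) + f(B)$ forces both $A \cap B$ and $A \cup B$ to be minimizers whenever $A, B$ are), which is short and self-contained.
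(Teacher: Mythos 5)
Your proposal is correct in substance and arrives at the same object as the paper's proof --- the principal sequence of the pair $\bigl(A\mapsto\nu(N(A)),\,\mu\bigr)$ --- but by a genuinely different route. The paper's appendix proof is the ``greedy'' presentation: it iteratively extracts the least-matchable set, defining $\beta_i$ directly as $\min\{\nu(N_{G_i}(S'))/\mu(S') : \emptyset\ne S'\subseteq X_1\setminus S_{i-1}\}$ in the graph $G_i$ obtained by deleting $S_{i-1}\cup N(S_{i-1})$, taking $S_i'$ to be a \emph{maximal} minimizer of this ratio, and proving $\beta_{i-1}<\beta_i$ by a short direct computation that exploits this maximality; property 1 is then immediate from the definition of $\beta_i$ and property 2 from $\beta_i$ being a minimum ratio. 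Your Lagrangian presentation --- breakpoints of $\beta\mapsto\argmin_A\bigl(\nu(N(A))-\beta\mu(A)\bigr)$, nestedness of maximal minimizers by uncrossing, and the two properties read off from optimality at and just below each breakpoint --- proves the same statement and arguably explains more transparently \emph{why} the sequence exists (the value function is a lower envelope of finitely many linear functions of $\beta$). The price is exactly the semicontinuity bookkeeping you anticipate: one must check that both $S_{i-1}$ and $S_i$ minimize at $\beta=\beta_i$ and that the maximal minimizer at $\beta_i$ equals $S_i$, which the greedy version sidesteps entirely. Both are standard faces of principal-partition theory, and either would serve in the appendix.

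One caveat on the boundary case you flag: points of zero $\mu$-mass can \emph{not} in general be ``absorbed into $S_1$''. If some $x$ has $\mu(x)=0$ but $N(x)$ carries $\nu$-mass not covered by $N(X_1\setminus\{x\})$, then no finite $\beta$ makes $X_1$ a minimizer of $\nu(N(\cdot))-\beta\mu(\cdot)$, and no finite $\beta_i$ can satisfy property 1 for the step that brings $x$ in, since $\beta_i\cdot 0$ cannot equal a positive quantity; a two-point example ($\mu$ concentrated on $x'$, $\nu$ uniform, each point adjacent only to its own copy) shows the proposition as literally stated fails. This is not a defect of your argument relative to the paper's: the paper's construction produces $\beta_k=\infty$ on the same example and only reduces to $\supp(\nu)=X_2$, not $\supp(\mu)=X_1$. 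The clean fix is to additionally assume $\supp(\mu)=X_1$, which is harmless where the proposition is used (deleting zero-$\mu$-mass points from $X_1$ only weakly increases the left-hand side of the target inequality \eqref{eqn:theta} and leaves the right-hand side unchanged), rather than to place such points into $S_1$.
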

\smallskip

The proposition above can be viewed as a kind of strengthening of Hall's theorem. For instance, if there is a fractional perfect matching between $\mu$ and $\nu$, i.e., a way of transporting mass distributed according to $\mu$ on $X_1$ along edges of the graph to yield precisely the distribution $\nu$ on $X_2$, then the claim will be satisfied with $k=1$, $S_1 = X_1$ and $\beta_1 = 1$, for in this case, $\mu(A) \leq \nu(N(A))$ for any $A \subseteq X_1$ (essentially the easy direction of Hall's theorem).  If there is no fractional perfect matching, Hall's theorem implies the existence of a set $S \subset X_1$ with $\nu(N(S)) < \mu(S)$. In the proposition, $S_1$ is the ``least matchable'' set: only a $\beta_1$ fraction of the mass in $S_1$ can be transported to $N(S_1)$. The full sequence is then obtained by removing $S_1$ and $N(S_1)$ and repeating on the remainder.

Taking the sequences guaranteed by the proposition, define $\tilde{\beta}_i \defeq \min\{\beta_i, 1\}$.

We split the left hand side of \eqref{eqn:theta} as follows: 
\begin{align*}
	\ \sum_{x \in X_2} \nu(x) h(\nu(N^2(x))) &= \sum_{i=1}^{k} \sum_{x \in N(S_i)\setminus N(S_{i-1})} \nu(x) h(\nu(N^2(x))).
\end{align*}
Note that for any $i \in [k]$, if $x \in X_2 \setminus N(S_{i-1})$, then $N(x) \subseteq X_1 \setminus S_{i-1}$, and hence, \pref{clm:match} implies that $\tilde{\beta_i} \mu(N(x)) \le \nu(N^2(x))$. 
Furthermore, using sub-multiplicativity of $h$ and \pref{prop:derivative}, we find that for any $i \in [k]$ and $x \in N(S_i) \setminus N(S_{i-1})$, 
\begin{align*}
 \ h(\nu(N^2(x))) = h\left(\mu(N(x)) \cdot \frac{\nu(N^2(x)}{\mu(N(x))}\right) &\le h(\mu(N(x))) + h\left(\min\left\{1, \frac{\nu(N^2(x))}{\mu(N(x))}\right\}\right) \\
\ &\hspace*{-3pt}\le h(\mu(N(x)) + h(\tilde{\beta}_i) \le h(\mu(N(x)) + \log\left(\frac1{\tilde{\beta}_i}\right).
 \end{align*}
 For the first inequality above, sub-multiplicativity is used when $\frac{\nu(N^2(x))}{\mu(N(x))}>1$; otherwise, the inequality follows because $h$ is decreasing and $h(1)=0$. 
 (The first inequality does still hold if $\mu(N(x)) = 0$, taking the minimum in the second term to have value 1 in this case, again because $h$ is decreasing.)

 The second inequality again uses that $h$ is decreasing, as well as that $\tilde{\beta}_i \le 1$ for every $i \in [k]$;
 and the final inequality uses \pref{prop:derivative}.

 Let $\ell$ be maximal such that $\tilde{\beta}_{\ell} < 1$; so $\beta_i = \tilde{\beta}_i$ for $i \leq \ell$.
Summing the last inequality over all $i \in [k]$ and $x \in N(S_i) \setminus N(S_{i-1})$, 
we obtain
\begin{align*}
    \  \sum_{x \in X_2} \nu(x) h(\nu(N^2(x))) &\le \sum_{x \in X_2} \nu(x) h(\mu(N(x))) +  \sum_{i=1}^{k} \nu(N(S_i) \setminus N(S_{i-1})) \cdot \log\left(\frac1{\tilde{\beta}_i}\right)\\
                                            \ &= \sum_{x \in X_2} \nu(x) h(\mu(N(x))) +  \sum_{i=1}^{\ell} \tilde{\beta}_i \mu(S_i \setminus S_{i-1}) \cdot \log\left(\frac1{\tilde{\beta}_i}\right)\\
                                          \ \ &\le \sum_{x \in X_2} \nu(x) h(\mu(N(x))) +  \left(\max_{\beta \in (0,1]} \beta  \log\left(\frac1{\beta}\right) \right) \sum_{i=1}^{\ell}  \mu(S_i \setminus S_{i-1}),
\end{align*}
where the second line follows from \pref{clm:match}. 
From this \eqref{eqn:theta} readily follows as $\sum_{i=1}^{\ell} \mu(S_i \setminus S_{i-1}) \le \sum_{i=1}^{k} \mu(S_i \setminus S_{i-1}) = \mu(X_1)=1$ and $\max_{\beta \in (0,1]} \beta  \log\left(\frac1{\beta}\right)  = 1/e$.

\end{proof}

\paragraph{Proof of \lref{lem:ent-to-simple}.}
For convenience, define 
\[ \HH(\mu, t) \defeq \int_0^\infty h(\mu(B(t,\eps)))d\eps \qquad \text{ and } \qquad \HH(\mu, \nu) \defeq \int_X \HH(\mu, t) d\nu(t). \]
Start by setting $S = \{ x \in X : \nu(x) > 0 \}$.
Consider the following greedy algorithm:

\begin{quote}
As long as $\HH(\nu_S, \nu_S) > \min_{x \in S} \HH(\nu_S, x) + \diam(X)$, choose
$s \in S$ so that $\HH(\nu_S, s)$ is minimized, and remove $s$ from $S$.
\end{quote}

Note that $\HH(\nu_S, \nu_S) \le \min_{x \in S} \HH(\nu_S, x) + \diam(X)$ when this terminates, since it is vacuous for $S=\emptyset$.

We will now show that $\HH(\nu_S, \nu_S)$ can only increase during the progression of the algorithm.
This suffices to prove the lemma, since then upon termination
\[ \HH(\nu, \nu) \leq \HH(\nu_S, \nu_S) \le \min_{x \in S} \HH(\nu_S, x) + \diam(X). \]

So, consider a moment in the algorithm where $s \in S$ is about to be removed from $S$, yielding $S' \defeq S \setminus \{s\}$. From our choice of $s$, 
\begin{equation}\label{eq:sprop}
    \HH(\nu_S, \nu_S) > \HH(\nu_S, s) + \diam(X).
\end{equation}
Let $\alpha = 1/(1-\nu_S(s))$; so $\nu_{S'}(t) = \alpha \nu_S(t)$ for $t \neq s$, and $\nu_{S'}(s) = 0$.
Thus 

\begin{claim}\label{clm:entinc1}
    For every $t \in X$,
    \begin{equation*}
        \HH(\nu_{S'}, t) \ge \HH(\nu_{S}, t) - \diam(X) \cdot (\alpha - 1).
    \end{equation*}
\end{claim}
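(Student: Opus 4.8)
\textbf{Proof plan for Claim~\ref{clm:entinc1}.}

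The plan is to compare $\HH(\nu_{S'},t)$ and $\HH(\nu_S,t)$ ball by ball (i.e.\ at each radius $\eps$) and exploit the fact that $\nu_{S'}$ is obtained from $\nu_S$ simply by deleting the point $s$ and renormalizing by the factor $\alpha = 1/(1-\nu_S(s))$. First I would fix $t \in X$ and $\eps > 0$ and split into the two cases according to whether $s \in B(t,\eps)$ or not. If $s \notin B(t,\eps)$, then $\nu_{S'}(B(t,\eps)) = \alpha\,\nu_S(B(t,\eps)) \geq \nu_S(B(t,\eps))$ (since $\alpha \geq 1$), so, because $h$ is decreasing, $h(\nu_{S'}(B(t,\eps))) \leq h(\nu_S(B(t,\eps)))$ — this radius contributes a \emph{nonnegative} amount to $\HH(\nu_S,t) - \HH(\nu_{S'},t)$, which is the wrong sign for our inequality but harmless; the point is it doesn't hurt us. If $s \in B(t,\eps)$, then $\nu_{S'}(B(t,\eps)) = \alpha(\nu_S(B(t,\eps)) - \nu_S(s))$, which could be smaller than $\nu_S(B(t,\eps))$, and here I need to bound the loss.

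The key step is to control $h(\nu_{S'}(B(t,\eps))) - h(\nu_S(B(t,\eps)))$ from above in the case $s \in B(t,\eps)$. Write $b = \nu_S(B(t,\eps))$, so $b \geq \nu_S(s)$ and $\nu_{S'}(B(t,\eps)) = \alpha(b - \nu_S(s)) = \alpha b - \alpha\nu_S(s) = \alpha b (1 - \nu_S(s)/b)$. Using sub-multiplicativity of $h$ (Proposition~\ref{prop:submultiplicative}) in the form $h(xy) \le h(x) + h(y)$, and that $h$ is decreasing with $h(1) = 0$: since $\alpha b \geq b$ (as $\alpha\ge 1$) but also we can factor $\nu_{S'}(B(t,\eps)) = b \cdot \bigl(\alpha(1 - \nu_S(s)/b)\bigr)$; if the second factor is $\le 1$ then $h(\nu_{S'}(B(t,\eps))) \le h(b) + h\bigl(\alpha(1-\nu_S(s)/b)\bigr)$, and if it is $\ge 1$ then $h(\nu_{S'}(B(t,\eps))) \le h(b) \le h(b) + h\bigl(\min\{1,\alpha(1-\nu_S(s)/b)\}\bigr)$ anyway, so in all cases
\[
h(\nu_{S'}(B(t,\eps))) - h(\nu_S(B(t,\eps))) \le h\!\left(\min\left\{1,\ \tfrac{1-\nu_S(s)/b}{1-\nu_S(s)}\right\}\right).
\]
Now I would bound this last quantity. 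Since $b \le 1$ we have $\nu_S(s)/b \ge \nu_S(s)$, hence $1 - \nu_S(s)/b \le 1 - \nu_S(s)$, so the argument of $h$ is $\le 1$ and equals $(1-\nu_S(s)/b)/(1-\nu_S(s))$. I want an upper bound on $h$ of this in terms of $\alpha - 1 = \nu_S(s)/(1-\nu_S(s))$. Using $h(x) \le \log(1/x)$ (Proposition~\ref{prop:derivative}), $h\bigl((1-\nu_S(s)/b)/(1-\nu_S(s))\bigr) \le \log\bigl((1-\nu_S(s))/(1-\nu_S(s)/b)\bigr) = \log(1-\nu_S(s)) - \log(1-\nu_S(s)/b) \le -\log(1-\nu_S(s)/b) \le -\log(1-\nu_S(s)/b)$. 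Here I would use $-\log(1-z) \le z/(1-z)$ for $z\in[0,1)$ with $z = \nu_S(s)/b$: this gives the bound $\dfrac{\nu_S(s)/b}{1-\nu_S(s)/b}$. Hmm — this has $b$ in it, and $b$ can be as small as $\nu_S(s)$, making this blow up; so a cruder route is needed. Instead, simply note the argument of $h$ is $\ge (1-1)/(1-\nu_S(s))$? No. The clean fix: bound $1 - \nu_S(s)/b \ge$ nothing useful, so instead bound from below the ratio differently — actually the safe approach is to integrate. We only need the \emph{integrated} inequality, and $s \in B(t,\eps)$ only for $\eps \ge d(t,s)$, but more importantly once $\eps$ is large enough that $\nu_S(B(t,\eps))$ is bounded below by a constant the loss per scale is tiny, and the total range of $\eps$ over which $s \in B(t,\eps)$ but $b$ is tiny is at most $\diam(X)$ wide.

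Consequently, the main step — and the place I expect the real work — is to show $\int_0^\infty \bigl[h(\nu_{S'}(B(t,\eps))) - h(\nu_S(B(t,\eps)))\bigr]\,d\eps \le \diam(X)(\alpha-1)$. The cleanest argument: for radii with $s \notin B(t,\eps)$ the integrand is $\le 0$; for radii with $s \in B(t,\eps)$ we have $\eps \le \diam(X)$ (so the interval of such $\eps$ has length $\le \diam(X)$), and on that interval I claim the integrand is $\le \alpha - 1$ pointwise. To see the pointwise bound, use monotonicity of $h$: $\nu_{S'}(B(t,\eps)) = \alpha\nu_S(B(t,\eps)\setminus\{s\}) \ge \alpha\bigl(\nu_S(B(t,\eps)) - \nu_S(s)\bigr)$, and compare $h$ at this point versus $h(\nu_S(B(t,\eps)))$ via sub-multiplicativity exactly as above to reduce to bounding $h(\min\{1, 1-\alpha\nu_S(s)\})$ — wait, let me instead just use $\nu_{S'}(B(t,\eps)) \ge \nu_S(B(t,\eps)) - \nu_S(s) \ge \nu_S(B(t,\eps))\cdot\bigl(1 - \nu_S(s)/\nu_S(B(t,\eps))\bigr)$ is again $b$-dependent. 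The truly robust version: since $h$ is convex and decreasing with $|h'(1)|=1$ and $-1 \le a h'(a) \le 0$ (Proposition~\ref{prop:derivative}), the map $\eps \mapsto h(\nu_{S'}(B(t,\eps)))$ differs from $\eps\mapsto h(\nu_S(B(t,\eps)))$ only where the ball contains $s$, and there $h(\nu_{S'}(B(t,\eps))) \le h(\nu_S(B(t,\eps))) + (\alpha - 1)$ because decreasing $\nu_S(s)$-worth of mass and multiplying by $\alpha$ changes $h$ by at most $\sup_a |a h'(a)| \cdot \log\alpha \le \log\alpha \le \alpha - 1$ — this is the estimate I'd make rigorous using $h(a) - h(\alpha a (1-\nu_S(s)/b)) $ and the concavity/derivative bounds, and it is exactly the kind of one-scale estimate used in the proof of Lemma~\ref{lem:scale}. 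Integrating the pointwise bound $\le \alpha - 1$ over the $\eps$-interval of length $\le \diam(X)$ yields $\HH(\nu_{S'},t) \ge \HH(\nu_S,t) - \diam(X)(\alpha-1)$, as claimed. The main obstacle is getting a \emph{uniform} (in $b = \nu_S(B(t,\eps))$) one-scale bound on the change in $h$ after deleting and renormalizing, which is where Proposition~\ref{prop:derivative}'s bound $|ah'(a)| \le 1$ is essential; the rest is bookkeeping over scales together with the fact that balls containing $s$ have radius at most $\diam(X)$.
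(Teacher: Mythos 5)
Your proof goes in the wrong direction. The claim is a \emph{lower} bound on $\HH(\nu_{S'},t)$, i.e., one must show $\int_0^{\diam(X)}\bigl[h(\nu_S(B(t,\eps))) - h(\nu_{S'}(B(t,\eps)))\bigr]d\eps \le \diam(X)\cdot(\alpha-1)$, so the quantity to control pointwise is $h(\nu_S(B(t,\eps))) - h(\nu_{S'}(B(t,\eps)))$. You instead spend the argument upper-bounding the reverse difference $h(\nu_{S'}(B(t,\eps))) - h(\nu_S(B(t,\eps)))$, and your concluding step deduces the claim from $\int\bigl[h(\nu_{S'}(B)) - h(\nu_S(B))\bigr]d\eps \le \diam(X)(\alpha-1)$ --- but that inequality says $\HH(\nu_{S'},t) \le \HH(\nu_S,t) + \diam(X)(\alpha-1)$, the opposite of what is claimed. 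Worse, the pointwise bound you assert in that direction, $h(\nu_{S'}(B)) \le h(\nu_S(B)) + (\alpha-1)$ when $s \in B$, is false: if $B(t,\eps)$ meets $\supp(\nu_S)$ only in $\{s\}$, then $\nu_{S'}(B(t,\eps)) = 0$ and $h(\nu_{S'}(B(t,\eps)))$ is unbounded (e.g.\ for the Gaussian functional). You notice exactly this blow-up (``$b$ can be as small as $\nu_S(s)$'') but never escape it, because it cannot be escaped; it also never needs to be, since a large $h(\nu_{S'}(B))$ only \emph{helps} the claimed inequality. Symmetrically, the case you dismiss as harmless ($s \notin B(t,\eps)$, where $h(\nu_S(B)) - h(\nu_{S'}(B)) \ge 0$) is precisely the one that requires an estimate.

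The intended argument needs no case split on whether $s \in B(t,\eps)$. For every $T$ one has $\nu_{S'}(T) = \alpha\,\nu_S(T\setminus\{s\}) \le \alpha\,\nu_S(T)$, hence $\nu_S(B(t,\eps)) \ge \nu_{S'}(B(t,\eps))/\alpha$, and since $h$ is decreasing, $h(\nu_S(B(t,\eps))) \le h\bigl(\nu_{S'}(B(t,\eps))/\alpha\bigr)$. Convexity of $h$ together with $-1 \le a h'(a) \le 0$ (\propref{prop:derivative}) gives $h(z/\alpha) \le h(z) + (\alpha-1)$ for all $z \in (0,1]$, so the integrand $h(\nu_S(B(t,\eps))) - h(\nu_{S'}(B(t,\eps)))$ is at most $\alpha-1$ uniformly in $\eps$; integrating over $\eps \in [0,\diam(X)]$ finishes the proof. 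Your instincts point at the right tools ($|ah'(a)|\le 1$, a per-radius bound, the $\diam(X)$ window), but they must be applied to bound $h(\nu_S(\cdot))$ from above in terms of $h(\nu_{S'}(\cdot))$, not the other way around.
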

\begin{proof}
    Recall that $h$ is convex and satisfies $-1 \le ah'(a) \leq 0$ for any $a \in (0,1]$ by \propref{prop:derivative}.
    Thus for any $z \in (0,1]$,
    \begin{equation}\label{eq:hbeh}
        h(z) \ge h(z/\alpha) + z(1 - 1/\alpha)h'(z/\alpha) = h(z/\alpha) -  (\alpha-1)\bigl|(z/\alpha) h'(z/\alpha)\bigr| \ge  h(z/\alpha) -  (\alpha-1).
    \end{equation}

    Now notice that for any $T \subseteq S$, we have that $\nu_S(T) \ge \nu_{S'}(T)/\alpha$. 
    Therefore, since $h$ is decreasing, 
    \begin{align*}
        \HH(\nu_S, t) &= \int_0^{\diam(X)} h(\nu_S(B(t,\eps)))d\eps \\
                      &\le  \int_0^{\diam(X)} h(\nu_{S'}(B(t,\eps)) / \alpha))d\eps\\ 
                    &\leq \int_0^{\diam(X)} h(\nu_{S'}(B(t,\eps)))d\eps + \diam(X)\cdot (\alpha - 1) ~=~ H(\nu_{S'} ,t) + \diam(X) \cdot (\alpha-1), 
    \end{align*}
    where the last inequality follows from \eqref{eq:hbeh}.
\end{proof}

We can now compare $\HH(\nu_{S'}, \nu_{S'})$ with $\HH(\nu_S, \nu_S)$: 
    \begin{align*}
        \HH(\nu_{S'}, \nu_{S'}) &\geq \HH(\nu_S, \nu_{S'}) - \diam(X)\cdot (\alpha-1) \qquad \qquad \qquad \qquad \qquad \qquad \qquad \:\: \text{(by \clmref{clm:entinc1})}\\
                                             &= \tfrac{1}{1-\nu_S(s)}\left(\HH(\nu_S, \nu_S) - \nu_S(s)\HH(\nu_S, s)\right) - \tfrac{\nu_S(s)}{1-\nu_S(s)}\cdot \diam(X)\\
                                             &\geq \tfrac{1}{1-\nu_S(s)}\bigl(\HH(\nu_S, \nu_S) - \nu_S(s)(\HH(\nu_S, \nu_S) - \diam(X))\bigr) - \tfrac{\nu_S(s)}{1-\nu_S(s)}\cdot \diam(X)  \:\: \text{(by \eqref{eq:sprop})}\\
                                             &= \HH(\nu_S, \nu_S),
    \end{align*}
which finishes the proof of \lref{lem:ent-to-simple}.

    \paragraph{Runtime Analysis.}  It is easily seen that the algorithm can be implemented in $O(n^3)$ time. First, by pre-processing the input, we may assume that the pairs of points are sorted by their pairwise distances --- this only adds an additional overhead of $O(n^2 \log n)$. Then,  as in each iteration we remove one element, there are $O(n)$ iterations to compute the final set $S'$. Furthermore, in each of these iterations, we are required to compute a minimizer $s$ of $\HH(\nu_S, x)$. 
This takes $O(n^2)$ time since for each $x \in S$, one can compute the measure of all possible balls $B(x,\eps)$ in $O(n)$ time using a straightforward dynamic program.

\section{Rounding Measures to Trees}\label{sec:trees}

In this section we give algorithms to round measures  to chaining and packing trees. To introduce the rounding algorithm for chaining trees, we first introduce \emph{labelled nets}. These are another kind of trees on the primal side that will be much easier to construct and these can easily be converted to chaining trees.

\subsection{Labelled Nets}\label{sec:lab-nets}

\begin{definition}[Labelled Nets] 
\label{def:lab-nets} Let $\alpha \in (0, \frac{1}{10}]$. An $\alpha$-labelled net $\CL$ on a finite metric space
$(X,d)$ is a rooted tree on subsets of $X$, with root node $R = X$,
together with labelings $m: \CL \rightarrow \Z_{\ge 0}$ and $\sigma: \CL \to \N$. Every non-leaf node $V \in \CL$ has a set of children $C_1,\ldots, C_k \subsetneq V$
which form a partition of $V$. We also require that every leaf node $V \in \CL$ is a singleton, i.e., $V=\{x\}$ for some $x \in X$. \\

We enforce the follow properties on $\CL$: 
\begin{enumerate}
\item \label{eq:diam} For $V \in \CL$,  the diameter $\diam(V) \leq \alpha^{m(V)} \cdot \diam(X)$. 
\item For any child $C$ of $V \in \CT$, the label $m(C) \ge m(V) + 1$. 
\item The children of a vertex are ordered and $\sigma$ gives the ordering of a vertex with respect to its parent. In particular, if the children of a vertex $V$ are $C_1, \ldots, C_k$, then $\{\sigma(C_i) \mid  i \in [k]\} = [k]$. 
\end{enumerate}

The value of an $\alpha$-labelled net $\CL$ with respect to a chaining functional
$h$ is defined as 
\begin{equation}
\label{eq:lab-net-value}
\valsep(\CL) ~\defeq ~ \sup_{x \in X} \sum_{(V,W) \in \CP_x}
\alpha^{m(V)} \cdot \diam(X) \cdot h(1/\sigma(W)),
\end{equation}
where $\CP_x$ is the unique path from the root $R$ to the leaf $\{x\}$, and the sum is over all the edges on the path where we write $(V,W)$ to denote the edge from $V$ to its child $W$. We use
the shorthand $\val_2(\CL)$ to denote the value with respect to the Gaussian
functional $g$. 
\end{definition}

Observe from the above definition that the diameters decrease by at least a factor of $\alpha$ as one goes down the tree. Furthermore, it is also easily seen that $\valsep(\CL) \ge  \diam(X) \cdot h(1/2)$ for any labelled net $\CL$. 

\paragraph{Labelled Nets to Chaining Trees.} 

Given an $\alpha$-labelled net $\CL$ for a metric space $(X,d)$, in $O(n^2)$ time where $n=|X|$ one can compute a chaining tree $\CCT$ that has the same value up to constant factors. For this consider the following algorithm. Below for notation, we write $\CP_V$ to denote the unique path from the root to the vertex $V$ in the labelled net $\CL$.

Let $R$ be the root of the labelled net $\CL$ and initialize the root $w$ of the chaining tree $\CCT$ to be any arbitrary point in $R$. Then, we do the following recursively:
for every child $C$ of $R$ in the labeled net, pick an arbitrary point $t \in C$ that has not been picked before during the previous steps and add $t$ as a child of $w$. For the edge $e=(w,t)$, we define $p_e = \frac32\pi^{-2} \cdot 2^{-m(C)} \cdot \prod_{V \in \CP_C \setminus \{R\}} 1/(2\sigma(V))^2$ and $l_e = d(w,t) \cdot h(1/p_e)$. Then, we recurse on each child $C$ until all the nodes of $\CL$ or all the elements of $X$ are exhausted. 

Note that as $\sum_{i=1}^\infty (1/{i^2}) = \pi^2/6$, it follows that the sum of probability labels of all the edges is at most $1/2$, so the above algorithm produces a valid chaining tree. Furthermore, as a labelled net gives a laminar family of sets and hence is always of size $O(n)$, it is easily seen that the above can be implemented in $O(n^2)$ time. 

Next we argue that the value of the chaining tree is at most a constant factor larger than the value of the labelled net.

\begin{lemma}\label{lem:chainingtree} The value of the chaining tree $\CCT$ constructed by the above algorithm on the labelled net $\CL$ satisfies $\valsep(\CCT) \lesssim \valsep(\CL)$. 
\end{lemma}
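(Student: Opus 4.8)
The plan is to fix an arbitrary leaf $x \in X$ of the chaining tree $\CCT$ and bound the cost $\sum_{e \in \CP_x} l_e$ of its root-to-leaf path by $O(1) \cdot \valsep(\CL)$, which suffices since $\valsep(\CCT)$ is the maximum of these costs. The point $x$ corresponds to a leaf of the labelled net as well, reached via a path of nodes $R = V_0 \supsetneq V_1 \supsetneq \dots \supsetneq V_\ell = \{x\}$ (up to the fact that $x$ may be ``picked'' higher up in $\CCT$, but in that case the remaining path only gets shorter, so we may assume $x$ is picked at its own leaf). The edge of $\CCT$ entering the node associated with $V_j$ has length $l_{e_j} = d(w_{j-1}, t_j) \cdot h(1/p_{e_j})$, where both endpoints lie in $V_{j-1}$, so $d(w_{j-1},t_j) \le \diam(V_{j-1}) \le \alpha^{m(V_{j-1})}\diam(X)$ by Property~\ref{eq:diam} of the labelled net. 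Thus it remains to control $h(1/p_{e_j})$.

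Next I would estimate $h(1/p_{e_j})$ using sub-multiplicativity of $h$ (\propref{prop:submultiplicative}) together with the bound $h(a) \le \log(1/a)$ from \propref{prop:derivative}. By definition $1/p_{e_j} = \tfrac{2}{3}\pi^2 \cdot 2^{m(V_j)} \cdot \prod_{V \in \CP_{V_j}\setminus\{R\}} (2\sigma(V))^2$, so writing $1/p_{e_j}$ as a product of the factors $\tfrac{2}{3}\pi^2$, $2^{m(V_j)}$, and $(2\sigma(V))^2$ for $V$ on the path, sub-multiplicativity gives
\[
h(1/p_{e_j}) \le h\bigl(\tfrac{2}{3}\pi^2\bigr) + h\bigl(2^{m(V_j)}\bigr) + \sum_{V \in \CP_{V_j}\setminus\{R\}} 2 h\bigl(2\sigma(V)\bigr),
\]
and since $h$ is decreasing with $h(1)=0$, the first term vanishes (its argument exceeds $1$), and $h(2^{m(V_j)}) \le m(V_j)\log 2$, $h(2\sigma(V)) \le \log(2\sigma(V))$. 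This is the main technical step: the issue is that the cost of a single chaining-tree edge now involves a \emph{sum} over the whole prefix of the path (because $p_{e_j}$ is a product telescoping all the way up), so when we sum $l_{e_j}$ over $j$ we get a double sum $\sum_j \alpha^{m(V_{j-1})}\diam(X)\bigl(m(V_j)\log 2 + \sum_{i \le j} 2\log(2\sigma(V_i))\bigr)$, and we must show the geometric decay of $\alpha^{m(V_{j-1})}$ (recall $m$ strictly increases down the path, so $m(V_{j-1}) \ge j-1$, and $\alpha \le 1/10$) beats the at-most-polynomial growth of the inner sum.

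The final step is the summation bookkeeping. For the term $\alpha^{m(V_{j-1})}\diam(X)\cdot m(V_j)\log 2$, I would compare it against the labelled-net contribution of edge $(V_{j-1},V_j)$, which is $\alpha^{m(V_{j-1})}\diam(X)\cdot h(1/\sigma(V_j))$; since $h(1/2) > 0$ is a fixed constant, each labelled-net edge contributes at least $\alpha^{m(V_{j-1})}\diam(X)\cdot h(1/2)$, and the extra polynomial factor $m(V_j)$ is absorbed by pulling out a further geometric factor (using that $\alpha^{m(V_{j-1})} m(V_j)$ is summable along the path). For the $\sum_{i\le j} 2\log(2\sigma(V_i))$ contribution, I would reverse the order of summation: the factor $2\log(2\sigma(V_i))$ gets multiplied by $\diam(X)\sum_{j \ge i}\alpha^{m(V_{j-1})}$, which is a geometric tail bounded by $O(\alpha^{m(V_{i-1})})\diam(X) = O(1)\cdot \alpha^{m(V_{i-1})}\diam(X)$; and $\alpha^{m(V_{i-1})}\diam(X)\log(2\sigma(V_i))$ is, up to a constant, at most the labelled-net edge contribution $\alpha^{m(V_{i-1})}\diam(X)\, h(1/\sigma(V_i))$ provided $h(1/\sigma) \gtrsim \log(2\sigma)$ — which holds when $h$ has exponential-type decay but \emph{not} in general, so here one should instead absorb $\log(2\sigma(V_i))$ against $h(1/\sigma(V_i))$ only up to losing the trivial lower bound $h(1/2)\diam(X)$ per edge combined with the already-extracted geometric slack. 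Either way, each labelled-net path-edge is charged $O(1)$ times, giving $\sum_{e\in\CP_x} l_e \lesssim \valsep(\CL)$, and taking the max over $x$ completes the proof. I expect the delicate point to be exactly this last charging argument, ensuring that the cross terms coming from the telescoping product in $p_e$ are each absorbed into the corresponding labelled-net edge without the logarithmic factors causing trouble.
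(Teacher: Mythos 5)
Your overall architecture is the same as the paper's: fix a leaf $x$, walk down the corresponding labelled-net path $(V_0,\dots,V_k)$, bound each chaining-tree edge length by $\alpha^{m(V_{i-1})}\diam(X)$ times $h$ of the edge probability, split that probability into its $2^{-m}$ part and its $\prod\sigma^{-2}$ part via sub-multiplicativity, and then swap the order of summation so that the geometric decay of $\alpha^{m(V_{j-1})}$ absorbs the fact that each $\sigma(V_i)$ appears in the probabilities of all edges below it. (The ubiquitous reciprocals --- $h(1/p_e)$, $h(2^{m})$, $h(2\sigma)$ evaluated at arguments exceeding $1$ --- are clearly slips for $h(p_e)$, $h(2^{-m})$, $h(1/(2\sigma))$, inherited from a typo in the algorithm's description, and the bounds you then write down are the ones for the intended arguments, so I do not count this against you.)

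There is, however, one genuine gap, and it sits exactly where you flagged "the delicate point." After sub-multiplicativity you immediately degrade $h\bigl(1/(2\sigma(V_i))^2\bigr)$ to $2\log(2\sigma(V_i))$ using $h(a)\le\log(1/a)$, and you then need to charge $\alpha^{m(V_{i-1})}\diam(X)\log(2\sigma(V_i))$ against the labelled-net edge contribution $\alpha^{m(V_{i-1})}\diam(X)\,h(1/\sigma(V_i))$. As you yourself observe, $\log(2\sigma)\lesssim h(1/\sigma)$ fails for general $h$ of log-concave type: for the Gaussian functional one has $h(1/\sigma)\asymp\sqrt{\log\sigma}$, so the ratio $\log(2\sigma)/h(1/\sigma)$ is unbounded in $\sigma$. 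Your proposed repair --- absorbing the excess into the trivial per-edge lower bound $h(1/2)\diam(X)$ together with the leftover geometric slack --- does not work, because the loss is not a summability issue along the path but a pointwise one: a single labelled-net edge from the root to its $N$-th child already contributes $\diam(X)\log(2N)$ to your bound versus only $\diam(X)\,h(1/N)\asymp\diam(X)\sqrt{\log N}$ to $\valsep(\CL)$, and no amount of geometric slack (which is $O(\diam(X)h(1/2))$, independent of $N$) closes a gap of order $\sqrt{\log N}$. The fix is simply to not pass to logarithms for the $\sigma$-factors at all: sub-multiplicativity gives $h\bigl(\prod_j 1/\sigma(V_j)^2\bigr)\le\sum_j 2h(1/\sigma(V_j))$ directly, and these terms are charged verbatim (after the order-of-summation swap) to the labelled-net edges. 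The crude bound $h(a)\le\log(1/a)$ should be reserved for the $2^{-m(V_j)}$ factor only --- and even there it is cleaner to use $h(2^{-m})\le m\,h(1/2)$ and the fact that $\valsep(\CL)\ge\diam(X)h(1/2)$, which avoids having to verify $\log 2\lesssim h(1/2)$ (true under the normalization $f(0)=1$, but you do not justify it).
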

\begin{proof}
Consider an arbitrary $x \in X$ and let $\CP'_x = (t_0, t_1, \cdots, t_k = x)$ be the path to the leaf $x$ in the chaining tree $\CCT$. By construction, there is a path $(V_0:=X, \cdots, V_k)$ in the labelled net $\CL$ such that $t_i \in V_i$ for $i \in \{0\} \cup [k]$.  It follows that $d(t_i, t_{i+1}) \le \alpha^{m(V_i)} \cdot \diam(X)$ by the definition of the labelled net.  Therefore, using that $m(V_i) \ge i$ and the fact that $h$ is decreasing and sub-multiplicative, we get that 
\begin{align*}
	\ \sum_{e \in \CP'_x} l_e &\le \sum_{i=1}^k \alpha^{m(V_{i-1})} \cdot \diam(X) \cdot  h\left(\frac1{8\cdot 2^{m(V_i)}} \cdot \prod_{j=1}^i\frac1{\sigma(V_j)^2}\right)  \\
	\ & \le \sum_{i=1}^k \alpha^{m(V_{i-1})} \cdot \diam(X) \cdot  h\left(\prod_{j=1}^i \frac1{\sigma(V_j)^2}\right) +\sum_{i=0}^{k-1} \alpha^{m(V_i)} \cdot \diam(X)  \cdot m(V_i) \cdot h\left(\frac1{2}\right) + h\left(\frac{1}{8}\right)\diam(X) \\
	\ & \le \sum_{i=1}^k 2\left( \sum_{j=i}^k \alpha^{m(V_{j-1})} \right) \cdot \diam(X) \cdot  h\left(\frac1{\sigma(V_i)}\right) + 4\sum_{i=0}^{k-1} \alpha^{m(V_i)} \cdot \diam(X)  \cdot m(V_i) \cdot h\left(\frac1{2}\right) ,
\end{align*}

Since $m(V_i)$ is increasing as one goes down the path and $\sum_{i\ge 0}i\cdot  \alpha^i  \le  \frac13$ for $\alpha \le \frac1{10}$, we have
\begin{align*}
	\ \sum_{e \in \CP'_x} l_e & \le \sum_{i=1}^k 4\alpha^{m(V_{i-1})} \cdot \diam(X) \cdot  h\left(\frac1{\sigma(V_i)}\right) + 4\diam(X)   \cdot h\left(\frac1{2}\right),
\end{align*}

The second term above is at most $4\valsep(\CL)$, and as $\{x\}$ was an arbitrary point in $X$, the above implies that $\valsep(\CCT) \le 8 \valsep(\CL)$. 
\end{proof}

\subsection{Rounding Measures to Labelled Nets}

To describe the greedy partitioning algorithm, we first preprocess the input metric space as follows.

\paragraph{Preprocessing.} We assume that for every $x \in X$, we have a list of all its neighbors sorted according to the distance from $x$. This can be done in $O(n^2 \log n)$ time. Sorting this list also allows to compute for every $x \in X$, the distinct breakpoints at which the balls $B(x, \eps)$ change -- the total time to compute all the breakpoints for every $x \in X$ is  $O(n^2)$. With this information at hand, using binary search and a straightforward dynamic program, one can pre-compute the measure of all the balls $\rho(B(x,\eps))$ for the different breakpoints $\eps$ for every $x \in X$. This takes $O(n)$ time for each $x$, so $O(n^2)$ time overall. The total time for preprocessing is $O(n^2 \log n)$.\\

We can now describe the greedy partitioning procedure to create a labelled net from a given probability measure $\rho$. To simplify the presentation, we remove the requirement that the child of a node be a proper subset of the node --- in particular, we allow a node to have a single child.
Such nodes can always be removed by simple post-processing.  

\paragraph{Greedy Ball Partitioning.}
We initialize $R = X$ and $m=m(R)=0$. We do the following:
\begin{enumerate}[label=(\alph*)]
	\item Initialize $i=1$ and let 
	$$t_{i} = \argmax_{x \in R \setminus S_{i-1}} \rho(B(x, \frac12\alpha^{m+2} \cdot \diam(X))) ~~\text{ and }~~ A_i = B(t_i, \frac12\alpha^{m+1} \diam(X)) \setminus S_{i-1},$$
	where $S_{i-1} = \cup_{j=1}^{i-1} A_j$. Increment $i$ until $A_i$'s form a partition of $R$. We refer to $t_i$ as the center of the node $A_i$.
	\item Add $A_i$'s as the children of $R$ with the ordering $\sigma(A_i)=i$ and $m(A_i)=m+1$. Then, for each $A_i$ that is not a singleton set, recurse by executing steps (a) and (b) with $R=A_i$ and $m=m(A_i)$.
\end{enumerate}
	
From the description of the algorithm, it is obvious that the requirements on the diameter of each node is satisfied, so the above algorithm constructs a valid labelled net.	
	
We stress that the sets $A_i$ correspond to removing a ball of radius $\frac12 \alpha^{m+1}  \cdot \diam(X)$ from the left over points in the current set, while the maximizers $t_i$'s are chosen by optimizing over a much smaller ball of radius $\frac12\alpha^{m+2} \cdot \diam(X)$. From this it also follows that the balls $B(t_i,\frac12\alpha^{m+2})$ are disjoint for different values of $i$ since the pairwise distance between $t_i$ and $t_j$ is at least $\frac12 \alpha^{m+1} \cdot \diam(X)$. Another very useful fact to observe is that the centers $t_1, \cdots, t_k$ chosen at a particular step are arranged in decreasing order according to $\sigma$, in particular. 
\[  \rho(B(t_{\sigma(1)}, r))  \ge \rho(B(t_{\sigma(2)}, r)) \ge \cdots \ge \rho(B(t_{\sigma(k)}, r)), \]
 where $r$ is the radius of the balls we optimized over while choosing $t_i$'s.

\paragraph{Running Time.} 

We store every node of the tree as a list of elements. To avoid duplication of the same sets, which may happen in the description of the above algorithm, we first describe a small modification to the above algorithm. The same subset $S \subseteq X$ appears many times only when $V_1, \ldots, V_{k}$ are nodes of the tree corresponding to the same subset $S$ and each $V_{i+1}$ is the only child of $V_i$ for $i \in [k-1]$. In this case the bound on the diameter of $S$ decreases by a factor of $\alpha$ in each iteration until we are within a factor of $\alpha$ of $\diam(S)$. In this case, assuming that the sequence terminates with $V_k$, where $V_k$ has more than one child, we can just add $V_k$ as the only child of $V_1$ and record the labels $m(V_1)$ and $m(V_k)$. By adding a simple check in step (b) and computing the diameter of $V_1$ while processing its parent $U$, this can be done in $O(|U|)$ time for each such set $V_1$ using the pre-computed data.

Next we show by induction on $|R|$ that the running time of a call to Greedy Ball Partitioning on $R$ is bounded by $c|R|^2 \log n$, for a constant $c>0$. For $|R|=1$ the algorithm takes constant time, proving the base case. Assume that the claim holds for $|R|<k$, consider a recursive call with $|R|=k$. Now a partition $\{A_1,\ldots, A_q\}$ is generated greedily. Every choice of $t_i$ and the construction of the corresponding $A_i$ takes at most $O(|R| \log n)$ steps since the measure of the balls is pre-computed (we assume it is stored in a sorted order). As described above we can assume that the partition consists of at least two elements, i.e. $q\geq 2$. A recursive subcall is executed on $A_i$ for every $i$ that by our induction hypothesis takes $c|A_i|^2\log n$ steps. So the total running time of the execution is 
$$c\log n \cdot (q|R| + \sum_{i=1}^q|C_i|^2)=c\log n \cdot \sum_{i=1}^q(q|C_i|+|C_i|^2)\leq c\log n \cdot(\sum_{i=1}^q|C_i|)^2=c|R|^2\log n,$$
where the inequality follows because $q\geq 2$ and $|C_i | \geq 1$ for all $i$. So each call to the algorithm takes at most $c|R|^2 \log n$ time.

A call with $R:=X$ and $|X|=n$ takes $O(n^2\log n)$ time. The preprocessing steps also need to be executed once and this takes $O(n^2\log n)$ time, so the total running time is $O(n^2\log n)$.

\paragraph{Value of the Labelled Net.} To complete the proof of \thmref{thm:lnet}, we show that the value of the labelled net is at most $O(\primal_h(\rho, X))$.

\begin{lemma}
	The value of the labelled net $\CL$ constructed by the greedy ball partitioning algorithm on the measure $\rho$ satisfies $\valch(\CL) \lesssim \primal_h(\rho, X)$.
\end{lemma}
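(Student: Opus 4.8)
The plan is to bound, for each $x \in X$, the cost $\sum_{(V,W) \in \CP_x} \alpha^{m(V)} \cdot \diam(X) \cdot h(1/\sigma(W))$ of the root-to-leaf path in the labelled net by a constant times $\int_0^\infty h(\rho(B(x,\eps))) d\eps \le \primal_h(\rho, X)$. Fix the leaf $\{x\}$ and write the path as $(V_0 = X, V_1, \dots, V_k = \{x\})$ with centers $t_i$ and labels $m_i := m(V_i)$, $\sigma_i := \sigma(V_i)$. The key structural fact I would extract from the greedy algorithm is a lower bound on $\rho(B(t_i, r_i))$, where $r_i \approx \frac12\alpha^{m_i+1}\diam(X)$ is the radius of the ball removed when $V_i$ was created. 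Since $t_1,\dots,t_{\sigma_i}$ were chosen to \emph{maximize} $\rho$ of the small probing ball at that step and are in decreasing $\sigma$-order (as noted in the excerpt, the small probing balls $B(t_j, \frac12\alpha^{m+2}\diam(X))$ are pairwise disjoint), the $\sigma_i$-th chosen center has probing ball of measure at most $1/\sigma_i$; conversely $t_i$ was a \emph{maximizer}, so for \emph{every} point $y$ still present at that step (in particular $y = x$, which lies in $V_i \subseteq V_{i-1}$), $\rho(B(t_i, \frac12\alpha^{m_i+1}\diam(X))) \ge \rho(B(y, \frac12\alpha^{m_i+1}\diam(X)))$. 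Combined with disjointness of the $\sigma$ probing balls of radius $\frac12\alpha^{m_i+1}\diam(X)$ at the parent's level (radius one scale coarser), this gives roughly $\rho(B(x, \tfrac12\alpha^{m_{i-1}+2}\diam(X))) \le 1/\sigma_i$, i.e.\ $h(1/\sigma_i) \le h(\rho(B(x, c\alpha^{m_i+1}\diam(X))))$ for an absolute constant $c$ (using that $h$ is decreasing, and $m_{i-1} = m_i - (\text{gap})$, which only helps).

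Given this, I would compare the path sum to the integral scale-by-scale. The term for edge $(V_{i-1}, V_i)$ is $\alpha^{m_{i-1}}\diam(X) \cdot h(1/\sigma_i) \lesssim \alpha^{m_i}\diam(X) \cdot h(\rho(B(x, c\alpha^{m_i+1}\diam(X))))$ (absorbing the factor $\alpha^{-(\text{gap})}$ is not allowed directly, so one must be careful: since $m_i \ge m_{i-1}+1$ and diameters shrink geometrically, one uses $\alpha^{m_{i-1}} = \alpha^{m_i} \cdot \alpha^{-(m_i - m_{i-1})}$ — instead I will charge $\alpha^{m_{i-1}}\diam(X)$ against the length of the dyadic-type interval $[\,c\alpha^{m_{i-1}+1}\diam(X),\, c\alpha^{m_{i-2}+1}\diam(X)\,)$ on which $\rho(B(x,\eps))$ is at most the relevant value, and sum). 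Concretely, since $h \circ (\eps \mapsto \rho(B(x,\eps)))$ is a decreasing function of $\eps$, on the interval $I_i$ of $\eps$-values between consecutive radii $r_{i-1}$ and $r_i$ (whose length is $\Theta(\alpha^{m_{i-1}}\diam(X))$ because the $m$-labels strictly increase and $\alpha \le 1/10$ makes the geometric series converge), we have $\int_{I_i} h(\rho(B(x,\eps)))d\eps \gtrsim \alpha^{m_{i-1}}\diam(X) \cdot h(\rho(B(x, r_i)))$, and $h(\rho(B(x,r_i))) \gtrsim h(1/\sigma_{i})$ up to splitting off a $\log$ term handled by sub-multiplicativity of $h$ exactly as in the proof of \lref{lem:chainingtree}. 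Summing over $i$ telescopes the integral and yields $\sum_i \alpha^{m_{i-1}}\diam(X) h(1/\sigma_i) \lesssim \int_0^{\diam(X)} h(\rho(B(x,\eps)))d\eps + O(\diam(X) h(1/2))$, and the additive $O(\diam(X))$ term is absorbed by the trivial lower bound $\primal_h(\rho,X) \gtrsim \diam(X) h(1/2)$ (from \eqref{eq:trivial-lb}, or rather its per-measure analogue which follows from the same triangle-inequality argument). Taking the supremum over $x$ gives $\valch(\CL) \lesssim \primal_h(\rho, X)$.

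The main obstacle I anticipate is getting the measure lower bound $\rho(B(x, \Theta(\alpha^{m_i+1}\diam(X)))) \le O(1/\sigma_i)$ cleanly: one must track that at the step creating $V_i$, the point $x$ had not yet been removed (true, since $x \in V_i$), that the probing balls used to select $t_1, \dots, t_{\sigma_i}$ at that step have radius $\frac12\alpha^{m_{i-1}+2}\diam(X)$ and are pairwise disjoint, and that $B(t_{\sigma_i}, \frac12\alpha^{m_{i-1}+2}\diam(X))$ — by maximality of the earlier choices — has measure at least that of $B(x, \frac12\alpha^{m_{i-1}+2}\diam(X))$; disjointness of $\sigma_i$ such balls each of measure $\ge \rho(B(x,\cdot))$ forces $\rho(B(x,\cdot)) \le 1/\sigma_i$. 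Matching the radius $\frac12\alpha^{m_{i-1}+2}\diam(X)$ appearing here with the radius $\Theta(\alpha^{m_i}\diam(X))$ needed to telescope against the integral requires the geometric gap between $m_{i-1}$ and $m_i$ to be bounded when it matters, or else absorbing it — this bookkeeping, together with the sub-multiplicativity manipulation to strip the extra dyadic/power-of-$\sigma^2$ factors (as in \lref{lem:chainingtree}), is where the real care is needed; everything else is routine.
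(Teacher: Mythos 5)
Your proposal matches the paper's proof in all essentials: the key bound $\rho(B(x,\tfrac12\alpha^{m_{i-1}+2}\diam(X)))\le \rho(B(t_i,\cdot))\le 1/\sigma(V_i)$ via greedy maximality plus disjointness of the probing balls, followed by integrating $h(\rho(B(x,\eps)))$ over disjoint intervals anchored at the parent radii $r_{i-1}\propto\alpha^{m_{i-1}}$ and summing. The complications you anticipate at the end largely evaporate: since the value formula already charges $\alpha^{m(V_{i-1})}$ (the parent's label) and the comparison interval $(\alpha^3 r_{i-1},\alpha^2 r_{i-1})$ has length $\Theta(\alpha^{m_{i-1}}\diam(X))$ with these intervals automatically disjoint across $i$, no bookkeeping of the $m$-gap, no sub-multiplicativity, and no additive $\diam(X)h(1/2)$ term are needed — $h$ being decreasing gives $h(\rho(B(x,\eps)))\ge h(1/\sigma(V_i))$ directly.
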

\begin{proof}	
   Consider the path from the root to an arbitrary leaf $\{x\}$.  Since $h(1)=0$, any node that is the only child of its parent does not contribute to $\valch(\CL)$ and so let $(V_{0} =X, V_{1}, \cdots, V_{k} = \{x\})$ be all the nodes of this path (except the root and leaf) which have at least one sibling. Moreover, for for $i \in \{0\} \cup [k]$, let $m_i = \depth(V_i)$ and $r_i = \frac12 \alpha^{m_i} \cdot \diam(X)$ denote the bound on the radius  (one half of the diameter) of $V_{i}$. Then, we shall prove that for every $i \in \{0\} \cup [k]$ and for $\eps \in (\alpha^3  r_{i-1}, \alpha^2  r_{i-1})$, the following holds
\begin{align}\label{eqn:2}
\ \rho(B(x,\eps))  \le \rho(B(t_i, \alpha^2 r_{i-1})) \le 1/\sigma(V_{i}),
\end{align}
where $t_i$ is the center of $V_{i}$.  As $h$ is decreasing, using the above gives us
 \begin{align*}
 	\ \alpha^2(1-\alpha) \cdot r_{i-1} \cdot h\left(\frac{1}{\sigma(V_{i})}\right) \le \int\limits_{\alpha^3  r_{i-1}} ^{\alpha^2 r_{i-1}} h(\rho(B(x,\eps)))d\eps.
 \end{align*}
Summing the above inequality over all $i \in \{0\} \cup [k]$ gives us that 
 $$ \frac{1}{2} \cdot \alpha^2(1-\alpha) \cdot \sum_{i=1}^{k} \alpha^{m_{i-1}} \cdot \diam(X) \cdot  h\left(\frac{1}{\sigma(V_{i})} \right) \le \int_0^{\diam(X)}h(\rho(B(x,\eps)))d\eps,$$
since we are only integrating over a larger interval. This gives the statement of the lemma as $x$ is arbitrary. 
 
To see \eqref{eqn:2}, let $U$ be the parent node of $V_{i}$ and let $s_j$'s be the centers chosen by the algorithm while processing $U$. Note that since the depth of $U$ is $m_{i-1}$, the center $s_j$'s are chosen to maximize the measure of the balls of radius $\frac12 \alpha^2 \cdot \alpha^{m_{i-1}} \cdot \diam(X) = \alpha^2 r_{i-1}$ and by construction they satisfy $\rho(B(s_j, \alpha^2 r_{i-1})) \ge \rho(B(s_{j+1}, \alpha^2 r_{i-1}))$ for every $j$. 

Given the above, the first inequality in \eqref{eqn:2} follows as $t_i$ is one of the centers and $x$ is a candidate maximizer while choosing any of the centers. Finally, for the second inequality in \eqref{eqn:2},  let $t_i = s_{j_*}$ for some index $j_*$. As the balls of radius $\alpha r_i$ are disjoint for distinct centers $s_j$'s and their measures are decreasing, by averaging, we must have $\rho(B(t_i, \alpha^2 r_{i-1})) = \rho(B(s_{j_*}, \alpha^2 r_{i-1})) \leq 1/j_*=1/\sigma(V_{i})$ by the definition of the ordering $\sigma$. This gives us \eqref{eqn:2} and completes the proof of the lemma.

\end{proof}

\subsection{Rounding Measures to Packing Trees}

In this section, we show how to round a measure $\rho$ to an $\alpha$-packing tree where $\alpha = 1/10$. To describe the algorithm, we assume that we have preprocessed the input as in the previous section. Furthermore, it will be convenient to extend $h$ to the interval $(0,\infty)$ by defining $h(a) = 0$ for $a>1$. It is easily seen that $h(ab) \le h(a) + h(b)$ holds for all $a,b > 0$ with this definition.

\newcommand{\CC}{\mathcal{C}}

\paragraph{Greedy Separated Ball Partitioning.} 
If $\dual(\rho,X) \ge 60\alpha^{-2} \cdot \diam(X) \cdot h(1/2)$, then we can output a trivial packing tree $\CT$ by adding two maximally separated elements as leaves of the root node $X$. Such a tree has value $\diam(X) \cdot h(1/2)$ so it satisfies $\dual_h(X) \lesssim \valsep(\CT)$. Thus, in the sequel we assume that $\dual(\rho,X)$ is larger than the above bound.

We first construct an auxiliary tree $\CT'$ where the children of each node are separated but the tree does not correspond to a laminar family of sets. Initialize $R=X$ to be the root of the tree $\CT'$ and $m=m(R)=0$. We do the following:
\begin{enumerate}[label=(\alph*)]
 \item Initialize $i=1$ and let 
	$$t_{i} = \argmax_{x \in R \setminus S_{i-1}} \rho(B(x, \frac14\alpha^{m+2} \cdot \diam(X)))  ~~\text{ and }~~ A_i = B(t_i, \frac14\alpha^{m+1} \cdot \diam(X)) \setminus S_{i-1},$$
where $S_{i-1} = \cup_{j=1}^{i-1} A_j$. Increment $i$ until $A_i$'s form a partition of $R$ and let $A_1, \cdots, A_{i_*}$ be the partition. We refer to $t_i$ as the center of the node $A_i$.
	
	\item By relabeling the indices, order the $A_i$'s such that $\rho(A_1)\geq \rho(A_2) \geq \cdots \geq \rho(A_{i_*})$ and for $i \in [i_*]$, let 
	$$B_i = B(t_i, \frac14 \alpha^{m+2}\cdot \diam(X)) ~~\text{ and }~~ L = \min_{i \in [i_*]} \left\{ i ~\bigg|~ \frac{\rho(A_i)}{\rho(R)} \geq \frac{6}{\pi^2}\cdot \frac1{i^2}\right\}.$$
	Note that $\sum_{i=1}^\infty 1/{i^2} = \pi^2/6$, so there always exists such an $L$. 
		\item \begin{enumerate}[label=\roman*.]
		
		\item \begin{sloppypar} If there is an $l \in [L]$ with $h(\rho(B_l)/\rho(A_l)) \geq 4\alpha^{-2} h (\rho(A_l)/\rho(R))$, then add $A_l$ as the sole child of $R$ in the separated tree with $m(A_l)=m+1$ and recurse by executing steps (a) and (b) with $R=A_l$ and $m = m(A_l)$.  
		\end{sloppypar}
		
		\item Otherwise, for each $l \in [L]$, add $B_l$  as a child of $R$ in the separated tree with $m(B_l)=m+2$. Then, for each $B_l$ that is not a singleton set, recurse by executing steps (a) and (b) with $R=  B_l$ and $m = m(B_l)$. 
		\end{enumerate}
\end{enumerate}

The above does not necessarily produce a laminar family of sets. So, for every node $V$ in the above tree $\CT'$, add to it all the elements that are present in the union of its children but are not in $V$. Furthermore, if a node $V$ only has a single child $C$, we replace $V$ with $C$ and keep doing this until every node in the tree has more than one children. This will be our final packing tree $\CT$.  The labeling function $\chi$ is defined in terms of $m$ as $\chi(R) = m(R) = 0$ for the root $R$ and $\chi(V) = m(V)+1$ for every other node $V$ that is not the root of $\CT$. Note that if we replace $V$ with $C$ the $\chi$ label we keep is the label for $C$.

\paragraph{Valid Packing Tree.} We now argue that the above algorithm produces a valid packing tree $\CT$ --- in particular, the nodes of the tree give a laminar family of sets, with appropriate diameter bounds and the children of each node are appropriately separated. 

First, we claim that the last step where we convert $\CT'$ to the tree $\CT$ by adding missing elements does not increases the diameter of each node by too much. In particular, let $V'$ be a node in $\CT'$ with center $t$ and radius $r = \frac14\alpha^{m(V')} \cdot \diam(X)$. Let $V$ be the corresponding node in the tree $\CT$.  Then, we claim by induction that $V \subseteq B(t, r/(1-\alpha))$. The base case at the bottom most level is easily seen. For the inductive step, there are two cases, in case $V'$ only has a single child, it must be that $V \subseteq B(t, r + \alpha r/(1-\alpha)) = B(t, r/(1-\alpha))$. Otherwise, $V'$ has multiple children and then $V \subseteq B(t, r + 2\alpha^2 r/(1-\alpha)) \subseteq B(t, r/(1-\alpha))$ since $\alpha = 1/10$. It thus follows that the diameter of any node $V$ in $\CT$ is at most $2r/(1-\alpha) \le \alpha^{m(V)} \cdot \diam(X)$. Since in the tree $\CT$ we remove all nodes with a single children, for each node $C$ with parent node $V$, we have that $m(C) \ge m(V) + 2$ and hence, by definition of $\chi$, we have that $\diam(C) \le \alpha^{\chi(V)+1} \cdot \diam(X)$. 

Next, we argue the separation. Suppose $V'$ has multiple children, let $U'_1$ and $U'_2$ be any two distinct children with centers $t_1$ and $t_2$. Then, by construction, we have that $U'_i$ is contained in a ball of radius $\alpha^2 r$ around $t_i$ and $d(t_1, t_2) \ge \alpha r$. It follows that the corresponding nodes $U_1$ and $U_2$ in the final separated tree $\CT$ satisfy $U_i \subseteq B(t_i, \alpha^2r/(1-\alpha))$ and since $\alpha \le \frac1{10}$, it follows that
$$d(U_1, U_2) \ge \alpha r - \frac{2\alpha^2}{1-\alpha} \cdot r = \frac{\alpha(1-3\alpha)}{1-\alpha}\cdot r \ge \frac{1}{10} \cdot \alpha^{m(V)+1}\cdot \diam(X) = \frac1{10} \alpha^{\chi(V)} \cdot \diam(X).$$ 

From the above, it also follows that no two children of $V$ can intersect in $\CT$, so the tree indeed gives a laminar family of sets with appropriate separation and diameter bounds. Thus, it is a valid packing tree.

\paragraph{Runtime Analysis.} 

We store every node of the tree as a list of elements and assume access to the same pre-processed data as in the case of labelled nets. Also, as in the case of labelled nets, we also assume that the same set is never duplicated, which can be ensured in $O(|U|)$ time while processing a node $U$ by computing the diameter. In addition, the post-processing of $\CT'$ to obtain $\CT$ can also be done in $O(n^2)$ time, so in the sequel, we analyze the running time to build $\CT'$.

For the analysis, we define $P(A):=A$ if $|A|=1$ and $P(A):=\{x\in X:d(x,A) \leq \frac14\alpha^{\kappa(A) + 2} \cdot \diam(X) \}$, where $\kappa(A):=\min\{k\in \mathbb{N} \mid \diam(A) \leq \alpha^k \cdot \diam(X) \}$.
We will now show with induction on the height of the output tree that the running time of a call to Greedy Ball Partitioning on $R$ is bounded by $c\log n\cdot (2|R|^2 + |P(R)|^2)$ for a constant $c>0$. For $|R|=1$ the algorithm takes constant time, which proves the base case.

Assume that the claim holds for all calls that output a tree of height smaller than $k$, and consider a call that returns a tree of height $k$.
A partition $\{A_1,\ldots, A_{i_*}\}$ is generated greedily. Every choice of $t_i$ and the construction of the corresponding $A_i$ takes at most $O(|R|\log n)$ steps  since the measure of the balls is pre-computed (we assume it is stored in a sorted order). Moreover, as described above we can assume that $i_* \ge 2$.

Now there are two cases: If (c-\textrm{i}.) is executed then only one recursive call is done with $R:=A_l$ for some $l \in [L]$. The time for processing $A_l$ is at most $c\log n\cdot  i_*|R| \le c\log n|R||R\setminus A_l|$ and by the induction hypothesis the running time of the recursive call is bounded by $c\log n (2|A_l|^2+|P(A_l)|^2)$, so ignoring the $c\log n$ multiplicative factor, the overall running time is at most
$$2|R||R\setminus A_l|+ 2|A_l|^2 +|P(A_l)|^2 \leq 2(|A_l|+|R\setminus A_l|)^2+|P(A_l)|^2 = 2|R|^2+|P(A_l)|^2.$$
Because $P(A_l)\subseteq P(R)$, the running time is bounded by $c\log n\cdot (2|R|^2+|P(R)|^2)$.

Otherwise, a recursive subcall is executed on $B_i$ for every $i \in [L]$ that by our induction hypothesis takes $c\log n\cdot (2|B_i|^2+|P(B_i)|^2)$ steps. Note that the $B_i$'s as well as $P(B_i)$'s are pairwise disjoint sets as they are sufficiently separated as we argued while showing that we obtain a valid packing tree. Let $K:=|R \setminus \cup_{i=1}^{i_*} B_i|$. The total running time of the execution is at most $c\log n \cdot (i_*|R| + \sum_{i\in L}2|B_i|^2 + \sum_{i\in L}|P(B_i)|^2)$. It is easily seen that $\sum_{i\in L} P(B_i)^2 \le P(R)^2$ and as for the other terms, we have that
\begin{align*}
 \ i_*|R| + 2\sum_{i \in L} |B_i|^2 &\le i_*K + i_*\sum_{i\in [{i_*}]} |B_i| +  2\sum_{i \in [{i_*}]} |B_i|^2 \\
 \ & \le 4K\sum_{i\in {[i_*]}} |B_i|  + 2 \sum_{\substack{i,j\in {[i_*]} \\i \neq j}} |B_i||B_j| +   2\sum_{i \in {[i_*]}} |B_i|^2 \le 2\left(\sum_{i \in {[i_*]}} |B_i|+K\right)^2= 2|R|^2,
\end{align*}
where we used that $i_*\geq 2$ and $|B_i|\geq 1$ for all $i \in [i_*]$. So, a call to the algorithm takes at most $c\log n \cdot(2|R|^2+|P(R)|^2)$ time.

A call with $R:=X$ and $|X|=n$ takes $O(n^2\log n)$ time. The preprocessing steps also need to be executed once and this takes $O(n^2\log n)$ time, so the total running time is $O(n^2\log n)$.

 \paragraph{Value of Packing Tree.}  To complete the proof of \thmref{thm:sep}, we show that the value of the packing tree is at least $\Omega(\dual_h(\rho, X))$.

\newcommand{\Xin}{X_{\mathsf{in}}}
\newcommand{\rt}{\mathsf{root}}

\begin{lemma} \label{lem:sep} The value of the  packing tree $\CT$ constructed by the greedy separated ball partitioning algorithm on the measure $\rho$ satisfies $\valsep(\CT) \gtrsim \dual_h(\rho, X)$. 
\end{lemma}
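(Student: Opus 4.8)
We want to show that the packing tree $\CT$ produced by greedy separated ball partitioning satisfies $\valsep(\CT) \gtrsim \dual_h(\rho,X)$. The key is to fix a near-optimal point for the simplified dual: pick $x^* \in \supp(\rho)$ achieving (up to the obvious approximation) $\int_0^\infty h(\rho(B(x^*,\eps)))d\eps \gtrsim \dual_h(\rho,X)$, i.e.\ $x^*$ is (close to) the minimizer defining $\dual_h(\rho,X)$ for the optimal measure — but since we only get $\rho$ as input, we instead argue directly: we will show that for \emph{every} leaf $\{x\}$ of $\CT$, the path cost $\sum_{V \in \CP_x \setminus\{x\}} \alpha^{\chi(V)}\diam(X)\,h(1/\deg(V))$ is at least a constant times $\int_0^{\diam(X)} h(\rho(B(x,\eps)))\,d\eps$ minus lower-order terms, and separately that \emph{some} leaf $x$ has $\int_0^{\diam(X)} h(\rho(B(x,\eps)))d\eps \gtrsim \dual_h(\rho,X)$. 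Actually the cleanest route mirrors the labelled-net proof: it suffices to lower bound, for the leaf $x$ whose integral is largest, the telescoped contribution scale by scale.

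The core computation proceeds scale by scale along the root-to-leaf path $\CP_x = (V_0 = X, V_1, \dots, V_\ell = \{x\})$ in $\CT'$ (before post-processing; post-processing only helps the value and only mildly perturbs diameters, as already shown in the ``valid packing tree'' paragraph). At a node $V_{i-1}$ with center $t$, depth label $m = m(V_{i-1})$, and radius $r_{i-1} \asymp \alpha^m \diam(X)$: if branch (c-i) fired, $V_i$ is the sole child and carries essentially no value but shrinks the radius; the point is that the condition $h(\rho(B_l)/\rho(A_l)) \ge 4\alpha^{-2} h(\rho(A_l)/\rho(R))$ is precisely what lets us charge the ``lost'' integral $\int_{\text{scale } i} h(\rho(B(x,\eps)))d\eps$ against $h(1/\deg)$ terms deeper in the tree (the factor $4\alpha^{-2}$ is tuned so the geometric series converges). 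If branch (c-ii) fired, $x$ lies in some $B_l$, and here we use: $\deg(V_{i-1}) = L$ with $\rho(A_L)/\rho(R) \ge \frac{6}{\pi^2 L^2}$, so $h(1/L) \ge h(1)\text{-ish}$... more precisely $h(1/L) \asymp h(\rho(A_L)/\rho(R))$ up to additive $O(1)$ (by submultiplicativity, $h(1/L^2) \le 2h(1/L)$ and $\frac{6}{\pi^2 L^2} \le \rho(A_L)/\rho(R)$, $h$ decreasing), and since we are in branch (c-ii), $h(\rho(B_l)/\rho(A_l)) \le 4\alpha^{-2}h(\rho(A_l)/\rho(R))$ for all $l \le L$ (in particular $l$ with $x \in B_l$). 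Combining, $\int_{\text{this scale}} h(\rho(B(x,\eps)))d\eps$ (which involves $\rho$ of balls of radius between $\alpha^{m+2}$-ish and $\alpha^{m+1}$-ish around $x$, sandwiched between $\rho(B_l)$ and $\rho(A_l)$-type quantities up to the $\deg$ ordering) is bounded by $O(\alpha^{m}\diam(X))$ times a constant multiple of $h(1/\deg(V_{i-1})) + h(1/\deg(\text{children}))$. Summing over $i$ and using that the radii decay geometrically (so $\sum_i \alpha^{m_i} \lesssim \alpha^{m_{i_0}}$ and $\sum_i i \alpha^i \le 1/3$ for $\alpha \le 1/10$) telescopes everything into $O(\valsep(\CT))$ on one side and $\int_0^{\diam(X)} h(\rho(B(x,\eps)))d\eps$ on the other.

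Finally, to go from ``some leaf integral'' to $\dual_h(\rho,X)$: $\dual_h(\rho,X) = \min_{x \in \supp(\rho)} \int_0^\infty h(\rho(B(x,\eps)))d\eps$ is a min over \emph{all} support points, and ${\rm leaf}(\CT) \subseteq \supp(\rho)$, so trivially $\min_{x \in {\rm leaf}(\CT)} \int \cdots \ge \dual_h(\rho,X)$ is the \emph{wrong} direction. Instead we note $\dual_h(\rho, X)$ here denotes the value $\int_0^\infty h(\rho(B(x,\eps)))d\eps$ evaluated for the \emph{specific} input measure $\rho$ at its own minimizing support point, and the greedy recursion is designed so that every center $t_i$ it ever selects is a near-maximizer of ball mass at the relevant scale; a short argument (identical in spirit to \eqref{eqn:2} in the labelled-net proof) shows the algorithm never ``throws away'' the cheap point, i.e.\ it keeps recursing toward a leaf whose integral is within a constant factor of $\dual_h(\rho,X)$ — this is where the initial case split on $\dual(\rho,X) \ge 60\alpha^{-2}\diam(X)h(1/2)$ (handling the trivial two-leaf tree) is used, since below that threshold the ``$+\diam(X)$''-type additive errors are already absorbed by the $\diam(X) h(1/2)$ lower bound on $\valsep$.

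\textbf{Main obstacle.} The delicate point is branch (c-i): when the tree takes a long chain of sole-child steps, we must show the accumulated integral over all those skipped scales is still charged, via the $4\alpha^{-2}$-inequality, to $h(1/\deg)$ contributions that actually appear in $\valsep(\CT)$ further down — and simultaneously that the diameter/separation bookkeeping from post-processing (the $r/(1-\alpha)$ blowups) does not destroy the scale-matching between ``ball of radius $\asymp \alpha^{m}\diam(X)$ around $x$'' and the node $B_l$ or $A_l$ containing $x$. Getting the constants to line up so that a single geometric series in $\alpha$ dominates both the skipped-scale charges and the post-processing slack is the crux; everything else is bookkeeping analogous to \lref{lem:chainingtree} and the labelled-net value lemma.
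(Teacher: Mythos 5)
Your high-level strategy coincides with the paper's: pass to the pre-processed tree $\CT'$, bound $\int_0^{\diam(X)} h(\rho(B(x,\eps)))\,d\eps$ along a root-to-leaf path by telescoping into the ratios $h(\rho(V_\ell)/\rho(V_{\ell-1}))$ via sub-multiplicativity, charge the sole-child chains from branch (c-i) to the next branching node, convert the branch (c-ii) ratio into $h(1/\deg)$ using the ordering of the $A_i$'s and the definition of $L$, and absorb the additive $\diam(X)\cdot h(1/2)$ terms with the initial case split. However, your final assembly is logically backwards. Since $\dual_h(\rho,X)$ is a \emph{minimum} of $\int_0^\infty h(\rho(B(t,\eps)))\,d\eps$ over points $t$, \emph{every} leaf $x$ automatically satisfies $\int_0^\infty h(\rho(B(x,\eps)))\,d\eps \ge \dual_h(\rho,X)$; combined with ``path cost of $x$ dominates the integral at $x$'' for every leaf, taking the infimum over leaves gives precisely $\valsep(\CT')\gtrsim\dual_h(\rho,X)$. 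You declare this trivial inequality to be ``the wrong direction'' and substitute an unnecessary and unproven claim that the greedy recursion tracks a leaf whose integral is within a constant of the minimum. No such property is needed, none is established by the algorithm, and the ``some leaf'' formulation you give earlier would not even suffice (the infimum defining $\valsep$ could be attained at a different leaf). This is the step where the quantifiers must be handled correctly, and as written they are not.

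More seriously, the step you yourself flag as the ``main obstacle''---charging the accumulated contribution of a (c-i) sole-child chain to value that actually appears in $\valsep(\CT)$---is asserted rather than proved. The paper's Claim~\ref{clm:sep1} does this by induction down the chain with the potential $\Psi(r,S)=\min_{x\in S}h\bigl(\rho(B(x,r))/\rho(S)\bigr)$: the (c-i) threshold gives $h(\rho(U_p)/\rho(U_{p-1}))\le\tfrac{\alpha^2}{4}\,\Psi(\alpha r_p,U_p)$ (using that the center was a maximizer of small-ball mass), while $\Psi(\alpha r_p,U_p)\le h(\rho(U_{p+1})/\rho(U_p))$ because $U_{p+1}$ is itself a ball of the matching radius centered in $U_p$. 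Only through this two-step comparison does each ratio get dominated by the ratio one step further down, after which the sum over the chain collapses to $\tfrac{\alpha}{4-\alpha}$ times the last ratio, which Claim~\ref{clm:sep2} then converts into $h(1/\deg)$ of the branching parent. Saying ``the factor $4\alpha^{-2}$ is tuned so the geometric series converges'' names the outcome but not the mechanism: the ratios along the chain need not decay geometrically on their own---it is the radii that decay, and the potential is what lets each term be pushed onto the next. Without this induction the core of the lemma is missing, and the rest of your sketch (which is essentially correct bookkeeping) cannot close.
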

\begin{proof} 
First, we note that it suffices to show that $\valsep(\CT') \gtrsim \dual_h(\rho, X)$, where $\CT'$ was the tree before adding all the missing elements to obtain $\CT$ and 
$$\valsep(\CT') \defeq ~\inf_{x \in {\rm leaf}(\CT)} \sum_{V
\in \CP_x \setminus \{x\}}
\alpha^{m(V)} \cdot \diam(X) \cdot h(1/\deg(V)).$$

To see this first notice that $\valsep(\CT)$ depends only on the number of children and $\chi$ values. Furthermore, nodes with a single child do not contribute to $\valsep(\CT')$ as $h(1)=0$ and even though $\CT'$ does not give a laminar family of sets, the transformation from $\CT'$ to $\CT$ does not change the set of leaves, only increases the diameter by a constant factor and preserves the degree for each node. Finally, the labels $m(V)$ and $\chi(V)$ differ by at most one for all the nodes, so $\valsep(\CT') \asymp \valsep(\CT)$.

With the above in mind, let us denote $H(\rho, t) = \int_0^\infty h(\rho(B(t,\eps))d\eps$ and note that $\dual_h(\rho, X) = \min_{t \in X} H(\rho, t)$.  Consider the path $\CP$ from the root of the tree $\CT'$ to an arbitrary leaf $\{x\}$ and let $V_{0} =X, V_{1}, \cdots, V_{k} = \{x\}$ be all the nodes of this path. Note that by definition, we have that $\{x\} = V_{k} \subseteq V_{{k-1}} \subseteq \cdots \subseteq V_{0} = X$. 
   
For $\ell \in \{0\} \cup [k]$, let $m_\ell = m(V_\ell)$ and let $r_\ell = \frac12 \alpha^{m_\ell} \cdot \diam(X)$ denote an upper bound on the radius (one half of the diameter) of $V_{\ell}$ where we define $r_k=0$. Note that apart from the root, the radius of each $V_\ell$ is even smaller than the above bound by a factor of $1/2$, but we only need an upper bound for the rest of the proof. Then,
we have that 
\begin{align*}
\   H(\rho, x) &=\sum_{\ell=1}^k \int_{2r_\ell}^{2r_{\ell-1}} h(\rho(B(x,\eps)) d\eps \le \sum_{\ell=1}^k 2r_{\ell-1} h(\rho(V_{\ell})) \defeq \Gamma,
\end{align*}
where the second inequality follows since $h$ is decreasing and $V_{\ell} \subseteq B(x,2r_\ell) \subseteq B(t,\eps)$ for $\eps \in (2r_\ell, 2r_{\ell-1})$ for $\ell \in [k]$. Further, using sub-multiplicativity of $h$, 
\begin{align*}
\   \Gamma \le \sum_{\ell=1}^k 2r_{\ell-1} \left( h\left(\frac{\rho(V_{\ell})}{\rho(V_{{\ell-1})}}\right) + h\left(\rho(V_{{\ell-1}})\right)\right) \le \sum_{\ell=1}^k 2r_{\ell-1} h\left(\frac{\rho(V_{\ell})}{\rho(V_{{\ell-1}})}\right) +  \alpha \Gamma,
\end{align*}
since the $r_\ell$'s decrease at least by a factor of $\alpha$ as $\ell$ increases and $h(\rho(V_{0}))=h(1)=0$. We remark that the ratios $\rho(V_\ell)/\rho(V_{\ell-1})$ may be greater than $1$ but since we extended $h$ to the positive real line while maintaining sub-multiplicativity, the above bound remains valid. 

Rearranging the expression above, we get that
\begin{equation}\label{eqn:sep1}
 \ H(\rho,x) \le \Gamma \le \frac2{1-\alpha} \sum_{\ell=1}^k r_{\ell-1} h\left(\frac{\rho(V_{\ell})}{\rho(V_{{\ell-1}})}\right).
\end{equation}

Let $V_{i_1}, \cdots, V_{i_{q-1}}$ be the sequence of nodes of the path $\CP$, arranged from root to leaf, that were added during the execution of step (c-\textrm{ii.}). Defining $i_0=0$ and $i_q=k$, we shall prove that for every $\kappa \in \{0\} \cup [q]$, the contribution of the entire chunk of nodes between $V_{i_\kappa}$ and $V_{i_{\kappa+1}}$ is accounted by the parent of the last node. In particular, the following holds
\begin{equation}\label{eqn:sep2}
\ \sum_{\ell=i_\kappa}^{i_{\kappa+1}} r_{\ell-1} h\left(\frac{\rho(V_{\ell})}{\rho(V_{{\ell-1}})}\right)  \le C(\alpha) \cdot r(W) \cdot h \left(\frac1{\deg(W)}\right) + C(\alpha)  \cdot r_{i_{\kappa} } \cdot h\left(\frac12\right),
\end{equation}
where $W$ is the parent node of $V_{i_{\kappa+1}}$ with the corresponding radius $r(W) = \alpha^{-2}  r_{i_{\kappa+1}}$, and $C(\alpha) \le 40/(3\alpha^2)$ is a constant. Note that for the last chunk of path between $V_{i_{q-1}}$ and $V_{i_q} = \{x\}$ there might not be any nodes that were added during the execution of step (c-\textrm{ii.}), in which case we define $r(W)=0$ in the above expression.

Plugging the bound  given by \eqref{eqn:sep2} in \eqref{eqn:sep1},  it follows that 
$$ \dual_h(\rho, X) \le {4C(\alpha)}  \cdot \valsep(\CT') + {4C(\alpha)} \cdot \diam(X) \cdot h(1/2),$$ 
as $x$ was an arbitrary leaf of $\CT'$ and $\alpha\le 1/10$. By our assumption that $\dual_h(\rho, X) \ge 60\alpha^{-2} \cdot \diam(X) \cdot h(1/2)$, the above implies that $\dual_h(\rho, X) \lesssim \valsep(\CT')$ giving us statement of the lemma.

To finish we now prove \eqref{eqn:sep2}. Fix a value of $\kappa \in \{0\} \cup [q]$, and for convenience, let us relabel the nodes so that $V_{i_{\kappa}} := U_{0}, U_{1}, \cdots, U_{j}, U_{j+1} := V_{i_{\kappa+1}}$ is the path from $V_{i_{\kappa}}$ to $V_{i_{\kappa+1}}$. Also, by a slight abuse of notation, let us write $r_0, \cdots, r_{j+1}$ to denote the corresponding sequence $r_{i_\kappa}$'s of radii of the nodes $V_{i_\kappa}$ to $V_{i_{\kappa+1}}$.  Then, we shall show that the sum of the contributions of all but the last node in the left hand side of \eqref{eqn:sep2} can be charged to the last node:
\begin{claim}\label{clm:sep1}
 $\displaystyle \sum_{\ell=1}^{j} r_{\ell-1} h\left(\frac{\rho(U_{\ell})}{\rho(U_{{\ell-1}})}\right) \le \frac{\alpha}{4-\alpha} \cdot r_{j} \cdot h\left(\frac{\rho(U_{j+1})}{\rho(U_j)}\right).$
\end{claim}

Second, we show that the contribution of the last node is bounded by the right hand side of \eqref{eqn:sep2}.
 \begin{claim}\label{clm:sep2}
 $\displaystyle r_j h\left(\frac{\rho(U_{j+1})}{\rho(U_j)}\right) \le 10\alpha^{-2} \cdot r_{j} \cdot \left(h\left( \frac1{\deg(U_{j})}\right) + h\left(\frac12\right)\right).$
\end{claim}

Combining the above two claims yields \eqref{eqn:sep2} with $C(\alpha) = \frac{4}{4-\alpha} \cdot 10\alpha^{-2} \le 40/(3\alpha^2)$. This completes the proof of \lref{lem:sep} assuming the claims which we prove next.

\begin{proof}[Proof of \clmref{clm:sep1}]
	For $S \subseteq X$, let us define a potential
 \[\Psi(r, S) = \min_{x \in S} h\left( \frac{\rho(B(x, r) )}{\rho(S)}\right).\]
	We will prove by induction that for every $p \in \{0\} \cup [j]$, the partial sum
	\begin{equation}\label{eqn:clm1main}
	\ \sum_{\ell=1}^{p} r_{\ell-1} h\left(\frac{\rho(U_{\ell})}{\rho(U_{{\ell-1}})}\right) \le \left( \sum_{i=1}^p \left(\frac\alpha4\right)^i\right) \cdot r_{p} \cdot \Psi(\alpha r_p, U_p). 
	\end{equation}
Note that $U_{p+1} = B(t,\alpha r_p)$ for some $t \in U_p$. Since $h$ is decreasing, this implies that
\begin{equation}\label{eqn:psi}
 \ \Psi(\alpha r_p, U_p) \le h\left(\frac{\rho(U_{p+1})}{\rho(U_p)}\right),
\end{equation}
so taking $p=j$ and noting that $\sum_{i=1}^\infty (\alpha/4)^i = \alpha/(4-\alpha)$, the inequality \eqref{eqn:clm1main} yields the claim.

Now we proceed with the induction. Note that the stronger hypothesis with the potential $\Psi$ is crucial to carry out the induction. The base case when $p=0$ is trivial as $\Psi$ is non-negative. For the inductive step, consider an arbitrary $p \in [j]$ for which we have

	\begin{align}\label{eqn:clm1}
		\ \sum_{\ell=1}^{p} r_{\ell-1} h\left(\frac{\rho(U_{\ell})}{\rho(U_{{\ell-1}})}\right) &= \sum_{\ell=1}^{p-1} r_{\ell-1} h\left(\frac{\rho(U_{\ell})}{\rho(U_{{\ell-1}})}\right) + r_{p-1} h\left(\frac{\rho(U_p)}{\rho(U_{p-1})}\right)  \notag \\
		\ &\le \left( \sum_{i=1}^{p-1} \left(\frac\alpha4\right)^i\right) \cdot r_{p-1} \cdot \Psi(\alpha r_{p-1}, U_{p-1}) + r_{p-1} h\left(\frac{\rho(U_{p})}{\rho(U_{p-1})}\right) \notag\\
		\ &\le \left( \sum_{i=0}^{p-1} \left(\frac\alpha4\right)^i\right) \cdot r_{p-1} \cdot h\left(\frac{\rho(U_{p})}{\rho(U_{p-1})}\right),
\end{align}
where the first inequality follows from the inductive claim and the second from \eqref{eqn:psi}.
		
Now by construction, $h\left(\frac{\rho(U_{p})}{\rho(U_{p-1})}\right) \le \frac{\alpha^2}4 \cdot \left(\min_B h\left(\frac{\rho(B)}{\rho(U_{p})}\right)\right)$ where $B$ is a ball of radius $\alpha^2 r_{p-1} = \alpha r_p$ centered at some point in $U_p$. Thus, we obtain
\begin{align*}
		\  \eqref{eqn:clm1}  &\le  \left( \sum_{i=0}^{p-1} \left(\frac\alpha4\right)^i\right) \cdot r_{p-1} \cdot\frac{\alpha^2}4  \cdot \Psi(\alpha r_{p}, U_{p}) = \left( \sum_{i=1}^p \left(\frac\alpha4\right)^i\right) \cdot r_{p} \cdot \Psi(\alpha r_{p}, U_{p}),
	\end{align*}
since $r_p = \alpha r_{p-1}$. This gives us the inductive statement and completes the proof.
\end{proof}

\begin{proof}[Proof of \clmref{clm:sep2}]
	Let $t_1, \ldots, t_{i_*}$ be the centers chosen while processing $U_j$ and for $i \in [i_*]$, let $A_i = B(t_i, \alpha r_j) \cap S_{i-1}$ where $S_{i-1} = A_1 \cup \ldots \cup A_{i-1}$. By construction, $U_{j+1} = B_i$ for some $i \in L$ where $B_i = B(t_i, \alpha^2 r_j )$ and $L$ is the subset of $M$ as chosen in step (b). Moreover, by sub-multiplicativity we also have that 
	$$ r_j h\left(\frac{\rho(U_{j+1})}{\rho(U_j)} \right) \le r_j h\left(\frac{\rho(B_i)}{\rho(A_i)} \right) +r_j  h\left(\frac{\rho(A_i)}{\rho(U_j)} \right) \le (1+4\alpha^{-2}) r_j h\left(\frac{\rho(A_i)}{\rho(U_j)} \right),$$
where the second inequality holds since this node was added in step (c-\textrm{ii}.), so the condition in step (c-\textrm{i.}) is not satisfied.

Furthermore, for every $i \in L$, we have that $\rho(A_i)/\rho(U_j) \ge \frac{6}{\pi^2} \cdot \frac{1}{i^2} \ge \frac{6}{\pi^2} \cdot \frac{1}{|L|^2} \ge \frac12 \cdot \frac{1}{\deg(U_{j})^2}$, since $U_j$ has $|L|$ children. Since $\alpha = 1/10$, using this along with the fact  that  $h$ is decreasing and sub-multiplicative, we get that 
\begin{align*}
 \ r_j h\left(\frac{\rho(U_{j+1})}{\rho(U_j)} \right) &\le (1+4\alpha^{-2}) \cdot r_j  \cdot h\left(\frac{\rho(A_i)}{\rho(U_j)} \right)  \\
 \ & \le 10\alpha^{-2} \cdot r_{j} \left(h\left(\frac1{\deg(U_{j})}\right) + h\left(\frac12\right)\right).  \qedhere
 \end{align*}
\end{proof}
\end{proof}

\section{Applications}
\subsection{Cover time of graphs}

Consider the simple random walk on a finite connected graph $G = (V,E)$ started at a vertex $v$. Writing $\taucov$ to be the first time that every vertex of $G$ has been visited and denoting by $\BE[\taucov(v)]$ its expectation when the random walk is started at a vertex $v \in V$, the cover time of $G$ is defined to be the following fundamental statistic of the random walk,
\begin{align*}
\tcov(G)=\max_{v\in V}\BE[\taucov(v)].
\end{align*}

Ding, Lee and Peres \cite{DLP12} showed that the expected suprema of the
Gaussian free field, which is a natural Gaussian process associated with a
graph, characterizes the cover time up to constant factors. In particular, they
showed that $\tcov(G) \asymp |E| \cdot (\BE \max_{v \in V} \eta_v)^2$ where
$\{\eta_v\}_{v \in V}$ is the Gaussian free field. As the expected suprema is
equivalent to the $\primal_2$ functional on the corresponding metric space, to
give a constant factor approximation to cover time, it suffices to compute the corresponding $\primal_2$ functional. \cite{DLP12} gave a deterministic algorithm  to construct optimal packing trees and obtained a constant factor approximation for cover time. Subsequently, Meka \cite{M12} provided a deterministic PTAS to compute the suprema of a Gaussian process using different techniques --- this  gives an alternative deterministic way to compute the cover time up to constant factors. 

A third way to compute the value of the cover time is by solving the majorizing measures convex program. However as already mentioned, the deterministic algorithm of \cite{DLP12} to construct a packing tree is faster than this if one uses an off-the-shelf convex optimization algorithm. But it is quite conceivable that in our setting, where we only need a crude additive approximation to the convex program, improvements can be made; even near-linear time in the size of the input (which is $O(n^2)$) does not seem out of the question.

\subsection{Gordon's Theorem deterministically}
\label{sec:gordon}

The well-known Johnson-Lindenstrauss Lemma says that for any finite set $X \subset \BR^n$, there is a projection of dimension $m = O({\log |X|})$ such that all norms are approximately preserved. Gordon's theorem generalizes this to the setting of an arbitrary (even infinite) set $X \subset \BR^n$.

\begin{theorem}[Gordon~\cite{Gordon88}]\label{thm:gordon}
Let $X \subset \RR^n$ be a set such that $\|x\|_2 = 1$ for all $x \in X$, and let $w(X) = \E_{G} \sup_{x \in X} \langle G, x \rangle$ denote the Gaussian width of $X$, where $G$ is standard normal in $\BR^n$. Then for some $m=O((w(X)^2 + 1)/\epsilon^2)$, there is a projection $\Pi \in \RR^{m \times n}$ so that
\begin{equation}\label{eq:gordproj}
    \|\Pi x\|_2 \in [1-\epsilon, 1+\epsilon] \qquad \forall x \in X.
\end{equation}
\end{theorem}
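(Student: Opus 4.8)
The plan is to prove \thmref{thm:gordon} via the chaining-based argument of~\cite{ORS18}, organised so that the explicit chaining tree of \thmref{thm:constructive} can afterwards be used to make the construction deterministic. I would take $\Pi=\tfrac1{\sqrt m}G$ with $G\in\RR^{m\times n}$ having i.i.d.\ standard Gaussian (more generally, mean-zero subgaussian) entries. Since $\bigl|\|\Pi x\|_2^2-1\bigr|\le\epsilon$ already forces $\|\Pi x\|_2\in[1-\epsilon,1+\epsilon]$, it suffices to produce, for $m=C(w(X)^2+1)/\epsilon^2$ with $C$ a large absolute constant, a matrix $\Pi$ with $\sup_{x\in X}\bigl|\|\Pi x\|_2^2-1\bigr|\le\epsilon$. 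First I would pass to a finite index set by taking $X'$ to be a maximal $\epsilon$-separated subset of $X$: then $X'\subseteq X$ (so $w(X')\le w(X)$), and since every point of $X$ lies within $\epsilon$ of $X'$, preserving $\|\Pi x\|_2$ on $X'$ preserves it on all of $X$ up to a change in the constant multiplying $\epsilon$ (this last point is itself a short chaining estimate, the relevant set of residuals having diameter $O(\epsilon)$). This reduction is needed because \thmref{thm:constructive} applies to finite metric spaces; here the relevant metric is the Euclidean one, which is exactly the canonical metric of the Gaussian process $x\mapsto\langle G,x\rangle$ defining $w$.

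The core is a chaining bound for the random process $x\mapsto\|\Pi x\|_2^2-1$ on $(X',\|\cdot\|_2)$. This is a chaos process: by Hanson--Wright its increments obey a Bernstein-type estimate $\Pr\bigl[\,\bigl|\|\Pi x\|_2^2-\|\Pi y\|_2^2\bigr|\ge t\,\bigr]\le 2\exp\!\bigl(-c\min(mt^2/\|x-y\|_2^2,\,mt/\|x-y\|_2)\bigr)$, using $\|x+y\|_2\le 2$, and the generic chaining bound for such processes (Krahmer--Mendelson--Rauhut; Dirksen; as used in~\cite{ORS18}) gives
\[
\E\sup_{x\in X'}\bigl|\|\Pi x\|_2^2-1\bigr|\ \lesssim\ \frac{(\gamma_2(X',\|\cdot\|_2)+1)^2}{m}\ +\ \frac{\gamma_2(X',\|\cdot\|_2)+1}{\sqrt m}.
\]
This is exactly where the majorizing-measures viewpoint of the paper enters: the complexity parameter is Talagrand's $\gamma_2$-functional of the Euclidean metric, and $\gamma_2(X',\|\cdot\|_2)\asymp\primal_2(X')\asymp w(X')$ by \thmref{thm:fernique-talagrand} (the ``easy'' Fernique direction already suffices). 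Plugging in $m=C(w(X)^2+1)/\epsilon^2$ makes both terms $O(\epsilon)$ with a constant shrinking in $C$, so $\E\sup_x\bigl|\|\Pi x\|_2^2-1\bigr|\le\epsilon/2$; the tail form of the same chaining estimate upgrades this to $\sup_x\bigl|\|\Pi x\|_2^2-1\bigr|\le\epsilon$ with probability bounded away from $0$, which proves the theorem.

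To make the construction deterministic --- the actual point of this section --- I would re-run the chaining step as an \emph{explicit} union bound over the near-optimal chaining tree $\CCT^*$ of $(X',\|\cdot\|_2)$ produced by \thmref{thm:constructive} (applied with the Gaussian edge-length functional, so $\val_2(\CCT^*)\asymp\primal_2(X')\asymp w(X')$). Expanding $\|\Pi x\|_2^2$ after telescoping $\Pi x=\Pi w+\sum_{e\in\CP_x}\Pi(u_e-v_e)$ along the tree path $\CP_x$, the only randomness that must be controlled is the family of quantities $\langle\Pi e,\Pi e'\rangle-\langle e,e'\rangle$ over the $O(n^2)$ pairs of tree edges $e,e'$; on the event that each of these lies within its Bernstein tolerance --- a union-bound event of probability $\ge\tfrac12$ by the edge-probability budget of the tree --- the value $\val_2(\CCT^*)$ controls $\sup_x\bigl|\|\Pi x\|_2^2-1\bigr|\lesssim\epsilon$. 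Because $\CCT^*$ has only $O(n)$ edges there are only $\poly(n)$ such events, each the tail event of a polynomial of degree at most two in the entries of $\Pi$, so the union-bound failure probability is a pessimistic estimator computable in $\poly(n)$ time; the method of conditional expectations over the entries of $\Pi$ (taken Rademacher, for which the moment bounds behind the Bernstein estimates hold under $O(\log n)$-wise independence) then deterministically outputs a $\Pi$ satisfying~\eqref{eq:gordproj} in polynomial time.

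The step I expect to be the main obstacle is the chaining estimate in this explicit, tree-based form: one must match the Gaussian edge-length functional driving \thmref{thm:constructive} to the \emph{mixed} subgaussian/subexponential Bernstein tails of the chaos increments $\langle\Pi e,\Pi e'\rangle$, so that the union bound over the optimal tree recovers the generic-chaining estimate $\lesssim(w(X')+1)/\sqrt m+(w(X')+1)^2/m$ up to constants (both terms matter: the first forces the $w(X)^2/\epsilon^2$ sizing of $m$, and the second must separately be verified to be $O(\epsilon)$), and one must carry the bookkeeping of summing over \emph{pairs} of edges rather than single edges. A secondary obstacle is the net reduction itself: checking that a maximal $\epsilon$-separated subset faithfully transfers the embedding guarantee back to the (possibly infinite) set $X$, and dealing with degenerate cases such as $\diam(X)\ll\epsilon$.
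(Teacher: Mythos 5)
Your proposal follows essentially the same route as the paper: run the ORS18 chaining argument for the chaos process $x\mapsto\|\Pi x\|_2^2-1$, instantiate it on the deterministic near-optimal net/tree produced by \thmref{thm:constructive} (the paper converts its labelled net into an \emph{admissible net} rather than working with the chaining tree directly), and derandomize the resulting finite family of quadratic tail events by pessimistic estimators over $\pm1$ entries.

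Two of your bookkeeping choices need repair, though neither changes the strategy. First, controlling \emph{all} pairs $\langle\Pi\delta_e,\Pi\delta_{e'}\rangle$ of edge increments is lossy for a general chaining tree: a union bound with budget $p_ep_{e'}$ forces tolerances whose sum along a root-to-leaf path is of order $\bigl(\sum_e\|\delta_e\|\bigr)\cdot\val_2(\CCT^*)/\sqrt m$, and $\sum_e\|\delta_e\|$ is in general only bounded by $O(\val_2(\CCT^*))=O(w(X))$, yielding $O(w(X)\epsilon)$ instead of $O(\epsilon)$. This is rescued only because the trees of \secref{sec:trees} come from labelled nets with geometrically decreasing diameters, so that $\sum_e\|\delta_e\|=O(\diam(X))=O(1)$; you must either invoke that property explicitly or, as the paper does following \cite{ORS18}, group the cross terms as $\langle\Pi\pi_i(x),\Pi\delta_{i+1}\rangle$ with $\pi_i(x)$ a net point and reduce everything to single-vector norm events via the parallelogram law. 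Second, the claim that $O(\log n)$-wise independence suffices is not justified: the stratified failure probabilities $p_e$ along a chaining tree can be as small as $2^{-\Theta(\mathrm{depth})}$, and the depth is governed by the aspect ratio of the metric, not by $n$; flattening them to a uniform $1/\poly(n)$ would replace $\sqrt{\log(1/p_e)}$ by $\sqrt{\log n}$ in every tolerance and destroy precisely the gain of chaining over a naive union bound. The paper instead plugs the events into the exponential-moment pessimistic estimators of Dadush--Guzm\'an--Olver \cite{DGO18}, which are evaluated exactly under conditional expectations on fully specified $\pm1$ entries and require no limited-independence hypothesis; you should do the same. Your reduction from infinite $X$ to a maximal $\epsilon$-separated subset is a reasonable addition that the paper glosses over.
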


Existing constructions for the claimed projection $\Pi$ in Gordon's Theorem are randomized; indeed, simply choosing i.i.d.\ Gaussians of variance $1/m$ for each entry of $\Pi$ suffices. Let us denote this distribution on matrices by $\mathcal{G}$ for the remainder of this section.

Klartag and Mendelson~\cite{KlartagMendelson05} gave a proof of Gordon's theorem exploiting majorizing measures.
We will follow a more recent work of Oymak, Recht, Soltanolkotabi~\cite{ORS18}, who use majorizing measures to prove a substantial generalization of Gordon's theorem.
(See also~\cite{NelsonNotes} for an exposition more directly applicable to Gordon's theorem.) 
Their procedure starts from an \emph{admissible net} for the Gaussian process defined on $(X, \|\cdot\|_2)$. 
This is a variant of the labelled net construction, and can also be thought of as a special kind of chaining tree.
\begin{definition}
    Given a metric space $(X,d)$, a sequence $(\CA_0, \CA_1, \ldots, \CA_k)$ of partitions of $X$, where
    $\CA_i$ is a refinement of $\CA_{i-1}$ for each $i \in [k]$, is called  an {admissible net} if
    $|\CA_i| \leq 2^{2^i}$ for each $i$. 
    
    The \emph{value} of an admissible net $\CA \defeq (\CA_i)_{i=1}^k$ is defined to be
    \[ 
        \val_2(\CA) \defeq \max_{x \in X} \sum_{i=0}^k g(2^{-2^i})\diam(\CA_i(x)), 
    \] 
    where  $\CA_i(x)$ is the partition element that contains $x$ and $g$ is the Gaussian functional.
\end{definition}

The above definition is tailored for the Gaussian functional $g$ and we remark that one can convert a labelled net to an admissible net in $O(n^2)$ time, preserving the value up to constant factors, at least for the case of the Gaussian functional $g$. The proof is similar to how a labelled net can be converted to a chaining tree and can be found in \appref{sec:proof_admissible_nets}. Thus, we are able to (deterministically) construct an admissible net $\CA$ with $\val_2(\CA)
\asymp \primal_2(X)$.

Based on the admissible net $\CA$, Oymak et al.~\cite[Lemma 4.6]{ORS18}
(cf.~\cite[Lemma 1]{NelsonNotes}) define a set of events $\mathcal{E}$ such that: 
\begin{itemize}
\item any projection matrix that satisfies all the events necessarily satisfies \eqref{eq:gordproj}, and 
\item $\sum_{E \in \mathcal{E}} \Pr_{\Pi \sim \mathcal{G}}(\overline{E}) < 1/2$.
\end{itemize}
Then by the union bound, $\Pi$ drawn from $\mathcal{G}$ satisfies all events with probability at least $1/2$, proving Gordon's theorem.

Conveniently, the events they use can all be phrased in terms of approximately preserving the norm of some vector. That is, each $E \in \mathcal{E}$ is of the form
\begin{equation}\label{eq:gorddistort}
\bigl| \|\Pi v_E\|_2^2 - 1 \bigr| \leq \epsilon_E
\end{equation}
for some unit vector $v_E$ and distortion bound $\epsilon_E > 0$.\footnote{The events phrased in terms of inner products in \cite{ORS18} and \cite{NelsonNotes} are also implied by the norm events using the parallelogram law.
The bound on the operator norm of $\Pi$ required by \cite{NelsonNotes} can be replaced by $|X|$ norm events, since it is only used to bound $\|\Pi(x - a_x)\|$ for all $x \in X$; here, $a_x = \argmin_{a \in A} \|x - a\|$, with $A$ being some set which intersects each part of $\mathcal{A}_k$ precisely once.}

We can derandomize this as follows.
First, with the results of this paper we can efficiently and deterministically construct a good admissible net $\CA$.
From this, we can compute the events $\mathcal{E}$, in particular the vectors $v_E$ and distortion bounds $\epsilon_E$.
We then apply a deterministic version of Johnson-Lindenstrauss to find a $\hat{\Pi}$ that preserves the norm of $v_E$ up to error $\epsilon_E$ for each $E \in \mathcal{E}$, i.e., satisfies \eqref{eq:gorddistort}.

Some care is required, since a ``lopsided'' form of Johnson-Lindenstrauss is needed: different vectors have different distortion bounds.
But we can plug things fairly directly into the pessimistic estimator framework of Dadush, Guzman and Olver~\cite{DGO18}. The following theorem can be easily extracted from their paper:
\begin{theorem}
Suppose we are given a finite set $V$ of unit vectors in $\RR^n$, bounds $\epsilon_v > 0$ for each $v \in V$, and $\lambda > 0$, $m \in \Z_+$ such that
\begin{equation}\label{eq:phibound}
\sum_{v \in V} [\Phi_v^+ + \Phi_v^-] < 1,
\end{equation}
where
\begin{align*}
\Phi_v^{\pm} = e^{-\lambda \epsilon_v}\cdot \E_{\Pi \sim \mathcal{G}} \Bigl( e^{\pm \lambda (\|\Pi v\|_2^2 - 1)}\Bigr).
\end{align*}
Then in time $O(mn^2)$, one can deterministically construct a projection $\tilde{\Pi} \in \{\pm1\}^{m \times n}$ so that
\[
    \bigl| \|\tilde{\Pi}v\|_2^2 - 1\bigr| \leq \epsilon_v \quad \text{ for each } v \in V.
\]
\end{theorem}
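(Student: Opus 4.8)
The plan is to prove this by the method of pessimistic estimators (derandomized conditional expectations), following exactly the template of \cite{DGO18}; since the statement is only claimed to be ``easily extracted'' from there, I will describe how the pieces assemble rather than reprove everything. We construct a $\pm 1$ sign pattern entry by entry in a fixed order (say row by row, left to right), with the normalization $1/\sqrt m$ per entry understood, so that under the uniform sign distribution $\mathcal R$ one has $\E_{\Pi\sim\mathcal R}\|\Pi v\|_2^2=\|v\|_2^2=1$. For a partial assignment $\pi$ (a choice of signs on a prefix of the $mn$ entries) define the potential
\[
\widehat\Phi(\pi)\defeq\sum_{v\in V}\Bigl(e^{-\lambda\epsilon_v}\,\widehat M^{+}_v(\pi)+e^{-\lambda\epsilon_v}\,\widehat M^{-}_v(\pi)\Bigr),
\]
where $\widehat M^{\pm}_v(\pi)$ is an efficiently computable upper bound on the conditional moment generating function $\E_{\Pi\sim\mathcal R}\bigl[e^{\pm\lambda(\|\Pi v\|_2^2-1)}\bigm|\pi\bigr]$. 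Such a bound is available because $e^{\lambda\|\Pi v\|_2^2}=\prod_{i=1}^m e^{\lambda\langle\Pi_i,v\rangle^2}$ factors over rows, so the conditional expectation is a product of a deterministic factor for each fully fixed row, one partially determined factor for the current row, and the ``fresh'' factor $\E[e^{\pm\lambda\langle\text{row},v\rangle^2}]$ for each untouched row; one replaces the partial and fresh factors by closed-form surrogates using the Gaussian-smoothing identity $e^{\lambda z^2}=\E_{h\sim N(0,1)}[e^{\sqrt{2\lambda}zh}]$ (and its lower-tail analogue) together with $\cosh x\le e^{x^2/2}$, exploiting $\|v\|_2=1$.

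Three things must be checked. \textbf{(i) Small initial value.} At $\pi=\emptyset$ every row is fresh, and the surrogates are designed so that $\widehat M^{\pm}_v(\emptyset)$ bounds $\E_{\Pi\sim\mathcal G}[e^{\pm\lambda(\|\Pi v\|_2^2-1)}]$; hence $\widehat\Phi(\emptyset)\le\sum_v(\Phi^+_v+\Phi^-_v)<1$ by hypothesis \eqref{eq:phibound}. \textbf{(ii) Monotonicity.} Fixing the next entry changes only the current row's factor (per $v$, per sign); the average of the two resulting surrogates is at most the previous one — again the inequality $\cosh x\le e^{x^2/2}$ applied to that single coordinate — so one of the two sign choices does not increase $\widehat\Phi$, and we pick it greedily. \textbf{(iii) Soundness at the leaves.} When all $mn$ signs are fixed (pattern $\Pi$), every row is deterministic, the surrogates collapse to the exact values, and $e^{-\lambda\epsilon_v}e^{\pm\lambda(\|\Pi v\|_2^2-1)}\ge\mathbf 1\bigl[\pm(\|\Pi v\|_2^2-1)\ge\epsilon_v\bigr]$, since on that event the exponent is nonnegative and the exponential is always positive. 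Therefore $\sum_v\bigl(\mathbf 1[\,\|\Pi v\|_2^2-1\ge\epsilon_v\,]+\mathbf 1[\,\|\Pi v\|_2^2-1\le-\epsilon_v\,]\bigr)\le\widehat\Phi(\Pi)\le\widehat\Phi(\emptyset)<1$, which forces $\bigl|\|\Pi v\|_2^2-1\bigr|\le\epsilon_v$ for every $v\in V$, as required.

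For the running time, at each of the $mn$ steps the data determining the current row's surrogates — essentially the partial inner product of the current row against each $v$ and the mass of the not-yet-fixed coordinates of $v$ — changes by $O(1)$ per $v$, so evaluating the potential for both candidate signs costs $O(|V|)$ arithmetic operations; together with $O(n|V|)$ preprocessing this yields the stated bound (see \cite{DGO18} for the exact bookkeeping). The one genuinely delicate point, and the main obstacle, is step (i): reconciling the \emph{Gaussian} moment generating functions appearing in the hypothesis with the $\{\pm1\}$-valued output, i.e.\ showing the sign-ensemble MGF of $\|\Pi v\|_2^2$ (and of its partial-assignment conditionals) is dominated by the Gaussian one. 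The upper-tail direction is immediate from the Gaussian-smoothing identity and $\cosh x\le e^{x^2/2}$ (row by row, using $\|v\|_2=1$); the lower-tail direction needs more care and is the content of the relevant lemma in \cite{DGO18}. Everything else is the standard derandomization template, which is why the result is stated as an extraction from that paper.
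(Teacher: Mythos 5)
Your proposal is correct and follows the same route the paper takes: the paper gives no proof of this theorem at all, simply citing the pessimistic-estimator framework of Dadush, Guzman and Olver~\cite{DGO18}, and your outline is a faithful reconstruction of that framework (potential as a sum of surrogate conditional MGFs, greedy sign choice via averaging, soundness at the leaves via $e^{-\lambda\epsilon_v}e^{\pm\lambda(\|\Pi v\|_2^2-1)}\ge \mathbf{1}[\cdot]$), including the honest deferral of the lower-tail MGF domination to the relevant lemma of that paper. The only small mismatch is that your runtime accounting yields $O(mn|V|)$ rather than the stated $O(mn^2)$, but that discrepancy is inherited from the paper's own statement (which implicitly assumes $|V|=O(n)$ in the intended application) rather than introduced by your argument.
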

This theorem requires slightly more than bounds on $\sum_{E \in \mathcal{E}} \Pr_{\Pi \sim \mathcal{G}}(E)$.
The relation is that for any unit vector $v \in \RR^n$ and $\epsilon_v > 0$, 
\[ \Pr_{\Pi \sim \mathcal{G}}(\|\Pi v\|_2^2 - 1 > \epsilon_v) \leq \Phi^+_v \quad \text{and} \quad
    \Pr_{\Pi \sim \mathcal{G}}(\|\Pi v\|_2^2 - 1 < -\epsilon_v) \leq \Phi^-_v,
\]
via the usual Chernoff-Cram\'er approach.
But the bounds on $\Pr_{\Pi \sim \mathcal{G}}(E)$ are anyway best obtained through concentration inequalities (most conveniently, the Hanson-Wright inequality, see~\cite{KN14}), and
it is not hard to check that the condition \eqref{eq:phibound} of the above theorem can be satisfied for the set of events given by \cite{ORS18}. We omit the details.

\bibliographystyle{alpha}
{\footnotesize
\bibliography{chaining}

\begin{thebibliography}{JLSW20}

\bibitem[Bed06]{Bednorz06}
Witold Bednorz.
\newblock A theorem on majorizing measures.
\newblock {\em The Annals of Probability}, 34(5):1771--1781, 2006.

\bibitem[Bed12]{B12}
Witold Bednorz.
\newblock The majorizing measure approach to sample boundedness.
\newblock {\em Colloquium Mathematicum}, 139, 11 2012.

\bibitem[DGO18]{DGO18}
Daniel Dadush, Cristóbal Guzmán, and Neil Olver.
\newblock Fast, deterministic and sparse dimensionality reduction.
\newblock In {\em Proceedings of the 2018 Annual ACM-SIAM Symposium on Discrete
  Algorithms (SODA)}, pages 1330--1344, 2018.

\bibitem[DLP12]{DLP12}
Jian Ding, James~R Lee, and Yuval Peres.
\newblock Cover times, blanket times, and majorizing measures.
\newblock {\em Annals of mathematics}, 175(3):1409--1471, 2012.

\bibitem[Dud67]{Dudley67}
Richard~M Dudley.
\newblock The sizes of compact subsets of {H}ilbert space and continuity of
  {G}aussian processes.
\newblock {\em Journal of Functional Analysis}, 1(3):290--330, 1967.

\bibitem[Fer75]{Fernique75}
Xavier Fernique.
\newblock Regularit{\'e} des trajectoires des fonctions al{\'e}atoires
  gaussiennes.
\newblock In {\em Ecole d'Et{\'e} de Probabilit{\'e}s de Saint-Flour IV--1974},
  pages 1--96. Springer, 1975.

\bibitem[Fuj09]{Fujishige2009}
Satoru Fujishige.
\newblock {\em Theory of Principal Partitions Revisited}, pages 127--162.
\newblock Springer Berlin Heidelberg, Berlin, Heidelberg, 2009.

\bibitem[Gor88]{Gordon88}
Yehoram Gordon.
\newblock On {M}ilman's inequality and random subspaces which escape through a
  mesh in {R}n.
\newblock In {\em Geometric aspects of functional analysis}, pages 84--106.
  Springer, 1988.

\bibitem[GZ03]{GZ03}
Olivier Gu{\'e}don and Artem Zvavitch.
\newblock {\em Supremum of a Process in Terms of Trees}, pages 136--147.
\newblock Springer Berlin Heidelberg, Berlin, Heidelberg, 2003.

\bibitem[JLSW20]{JLSW20}
Haotian Jiang, Yin~Tat Lee, Zhao Song, and Sam~Chiu{-}wai Wong.
\newblock An improved cutting plane method for convex optimization,
  convex-concave games, and its applications.
\newblock In {\em Proccedings of the 52nd Annual {ACM} {SIGACT} Symposium on
  Theory of Computing, {STOC} 2020}, pages 944--953. {ACM}, 2020.

\bibitem[KM05]{KlartagMendelson05}
B.~Klartag and S.~Mendelson.
\newblock Empirical processes and random projections.
\newblock {\em Journal of Functional Analysis}, 225(1):229--245, Aug 2005.

\bibitem[KN14]{KN14}
Daniel~M. Kane and Jelani Nelson.
\newblock Sparser {J}ohnson-{L}indenstrauss {T}ransforms.
\newblock {\em Journal of the ACM}, 61(1), January 2014.

\bibitem[LT91]{LT91}
Michel Ledoux and Michel Talagrand.
\newblock {\em Probability in Banach Spaces: Isoperimetry and Processes},
  volume~23.
\newblock Springer Science \& Business Media, 1991.

\bibitem[Mek15]{M12}
Raghu Meka.
\newblock A polynomial time approximation scheme for computing the supremum of
  {G}aussian processes.
\newblock {\em Annals of Applied Probability}, 25(2):465--476, 04 2015.

\bibitem[Mil71]{Milman71}
Vitali~D Milman.
\newblock A new proof of {A}. {D}voretzky's theorem on cross-sections of convex
  bodies.
\newblock {\em Funkcional. Anal. i Prilozen}, 5:28--37, 1971.

\bibitem[MN11]{MN11}
Manor Mendel and Assaf Naor.
\newblock Ultrametric subsets with large {H}ausdorff dimension.
\newblock {\em Inventiones mathematicae}, 192, 06 2011.

\bibitem[MN13]{MN13}
Manor Mendel and Assaf Naor.
\newblock Ultrametric skeletons.
\newblock {\em Proceedings of the National Academy of Sciences},
  110(48):19256--19262, 2013.

\bibitem[Nel15]{NelsonNotes}
J.~Nelson.
\newblock Lecture notes for ``{A}lgorithms for {B}ig {D}ata'', {L}ecture 13.
\newblock
  \url{http://people.seas.harvard.edu/~minilek/cs229r/fall15/lec/lec13.pdf},
  2015.

\bibitem[ORS18]{ORS18}
Samet Oymak, Benjamin Recht, and Mahdi Soltanolkotabi.
\newblock Isometric sketching of any set via the {Restricted Isometry
  Property}.
\newblock {\em Information and Inference: A Journal of the IMA}, 7(4):707--726,
  2018.

\bibitem[Sle62]{Slepian62}
David Slepian.
\newblock The one-sided barrier problem for {G}aussian noise.
\newblock {\em Bell System Technical Journal}, 41(2):463--501, 1962.

\bibitem[Sud71]{Sudakov71}
Vladimir~Nikolaevich Sudakov.
\newblock Gaussian random processes and measures of solid angles in {H}ilbert
  space.
\newblock In {\em Doklady Akademii Nauk}, volume 197, pages 43--45. Russian
  Academy of Sciences, 1971.

\bibitem[Tal87]{Talagrand87}
Michel Talagrand.
\newblock Regularity of {G}aussian processes.
\newblock {\em Acta mathematica}, 159:99--149, 1987.

\bibitem[Tal90]{Talagrand90}
Michel Talagrand.
\newblock Sample boundedness of stochastic processes under increment
  conditions.
\newblock {\em The Annals of Probability}, pages 1--49, 1990.

\bibitem[Tal92]{Talagrand92}
Michel Talagrand.
\newblock A simple proof of the majorizing measure theorem.
\newblock {\em Geometric \& Functional Analysis GAFA}, 2(1):118--125, 1992.

\bibitem[Tal96]{T96}
Michel Talagrand.
\newblock Majorizing measures: the generic chaining.
\newblock {\em Annals of Probability}, 24(3):1049--1103, 07 1996.

\bibitem[Tal01]{T01}
Michel Talagrand.
\newblock Majorizing measures without measures.
\newblock {\em Annals of Probability}, 29(1):411--417, 02 2001.

\bibitem[vH16]{vH16}
Ramon van Handel.
\newblock Probability in {H}igh {D}imensions.
\newblock Lecture Notes. Princeton University, 2016.

\end{thebibliography}
}

\appendix

\section{Proofs Omitted from the Introduction and Preliminaries}\label{sec:app-prelims}

\subsection{Constructing Majorizing Measures from Chaining Trees.}

\begin{lemma}\label{lem:cctbound}
Given a chaining tree $\CCT$, one can construct a measure $\mu$ whose
value in \eqref{eq:maj-meas-ub} is at most $3 \cdot \val_h(\CCT)$.
\end{lemma}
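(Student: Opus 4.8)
The plan is to use exactly the construction hinted at in the introduction: given a chaining tree $\CCT$ with root $w$ and edge probabilities $(p_e)$, define a measure $\mu$ on $X$ by setting $\mu_w = 1/2$ and, for each edge $e = \{u,v\} \in E[\CCT]$ with $v$ closer to the root, $\mu_u = p_e$. Since $\sum_{e} p_e \le 1/2$ and the non-root vertices are in bijection with the edges, this is a valid (sub)probability measure, and we may harmlessly put any leftover mass anywhere (it only helps, since $h$ is decreasing). First I would fix an arbitrary point $x \in X$ and bound $\int_0^\infty h(\mu(B(x,\eps)))\,d\eps$. Let $\CP_x = (x = x_0, x_1, \ldots, x_m = w)$ be the path from $x$ up to the root, with $e_i = \{x_{i-1}, x_i\}$ and $l_i = d(x_{i-1},x_i) h(p_{e_i})$ the induced edge lengths; recall $\val_h(\CCT) \ge \sum_{i=1}^m l_i$ for every $x$.

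The key step is a scale-by-scale comparison. For a radius $\eps \ge 0$, let $j = j(\eps)$ be the largest index such that $d(x, x_j) \le \eps$ — i.e. $x_j$ is the farthest ancestor of $x$ still inside $B(x,\eps)$ (with $j = 0$ if even $x_1$ is outside, and note $x_j$ and all $x_0,\dots,x_j$ lie in $B(x,\eps)$ by the triangle inequality along the path). Then $B(x,\eps) \supseteq \{x_0, x_1, \ldots, x_j\}$, so $\mu(B(x,\eps)) \ge \mu(\{x_j\}) = p_{e_j}$ if $j \ge 1$, and $\mu(B(x,\eps)) \ge \mu_w = 1/2$ if $j = m$. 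Hence $h(\mu(B(x,\eps))) \le h(p_{e_{j(\eps)}})$ for $1 \le j(\eps) < m$, and $h(\mu(B(x,\eps))) \le h(1/2)$ once $j(\eps) = m$. Now I integrate over $\eps$: partitioning $[0,\infty)$ according to the value of $j(\eps)$, the set of $\eps$ with $j(\eps) = i$ is (contained in) an interval of length at most $d(x,x_{i+1}) - d(x,x_i) \le d(x_i, x_{i+1}) = d(e_{i+1})$ for $i < m$ (triangle inequality), and for $i = 0$ the interval $[0, d(x,x_1)]$ has length $\le d(e_1)$. Wait — I must be careful with the indexing: on the interval where $x_j$ is the farthest in-ball ancestor, the relevant edge whose length I want to compare against is $e_j = \{x_{j-1},x_j\}$. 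So I instead bound the length of $\{\eps : j(\eps) = i\}$ by $d(x, x_{i+1}) - d(x,x_i)$, multiply by $h(\mu(B(x,\eps))) \le h(p_{e_{i}})$ for $i\ge 1$ (and by $h(1/2)$ for the tail $i = m$), and get roughly
\[
\int_0^\infty h(\mu(B(x,\eps)))\,d\eps \;\le\; h(1/2)\cdot d(x,x_1) \;+\; \sum_{i=1}^{m-1} \bigl(d(x,x_{i+1}) - d(x,x_i)\bigr)\, h(p_{e_i}) \;+\; (\text{tail}).
\]
Using $d(x,x_{i+1}) - d(x,x_i) \le d(e_{i+1})$, shifting the index, and $h(p_{e_i}) \ge h(1/2)$ (since $p_{e_i} \le 1/2$), each term is comparable to $d(e_i) h(p_{e_i}) = l_i$, giving $\int_0^\infty h(\mu(B(x,\eps)))\,d\eps \lesssim \sum_i l_i \le \val_h(\CCT)$, and tracking constants should land at $\le 3\,\val_h(\CCT)$. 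Since $x$ was arbitrary, taking the sup over $x$ completes the proof.

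The main obstacle is bookkeeping the constants precisely to reach the factor $3$ rather than some larger absolute constant. The subtle points are: (i) the tail of the integral once we reach the root contributes $h(1/2)$ times a length of at most $d(x,w)$, which must be absorbed — here one uses that $2\val_h(\CCT) \ge \sum_i l_i \ge \sum_i d(e_i) h(1/2) \ge d(x,w) h(1/2)$ by the triangle inequality, exactly the "trivial diameter bound" reasoning from the excerpt; (ii) aligning which edge length $l_i$ pays for which scale-interval, which the index-shift above handles but needs to be written carefully; and (iii) confirming $h$ decreasing is all that is used (no convexity or log-concavity needed), so the lemma holds for general chaining functionals. I would also double-check the degenerate cases $m = 0$ (i.e. $x = w$), where $\mu(B(x,\eps)) \ge 1/2$ for all $\eps$ and the integral is $\le h(1/2)\diam(X) \le 2\val_h(\CCT)$.
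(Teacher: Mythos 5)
Your construction of $\mu$ and the overall scale-by-scale strategy are exactly the paper's: same measure, same decomposition of $\int_0^\infty h(\mu(B(x,\eps)))d\eps$ along the root path, same use of the trivial diameter bound $\diam(X)h(1/2)\le 2\val_h(\CCT)$ to absorb the tail, landing at the factor $3$. The one substantive difference is that the paper uses the \emph{cumulative} path lengths $r_i=\sum_{j<i}d(v_j,v_{j+1})$ as breakpoints (so the $i$-th interval has length exactly $d(v_i,v_{i+1})$ and $v_i\in B(x,r)$ for $r\ge r_i$ by the triangle inequality), whereas you use the actual distances $d(x,x_i)$; both work, yours giving slightly shorter intervals at the cost of having to argue that $\{\eps: j(\eps)=i\}$ is an interval.

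However, as written your key pairing is broken by an off-by-one that contradicts your own definition of $\mu$. You set $\mu_u=p_e$ for $e=\{u,v\}$ with $v$ the parent; on the path $x=x_0,\dots,x_m=w$ with $e_j=\{x_{j-1},x_j\}$, the vertex $x_j$ is the \emph{parent} endpoint of $e_j$ and the child endpoint of $e_{j+1}$, so $\mu(\{x_j\})=p_{e_{j+1}}$, not $p_{e_j}$. With your stated (incorrect) value $p_{e_j}$, the interval $\{\eps:j(\eps)=i\}$ of length at most $d(e_{i+1})$ gets multiplied by $h(p_{e_i})$, and the resulting product $d(e_{i+1})\,h(p_{e_i})$ genuinely cannot be bounded by a constant times $\sum_i l_i$: take $d(e_i)$ tiny with $p_{e_i}$ tiny (so $l_i=d(e_i)h(p_{e_i})$ is small but $h(p_{e_i})$ is huge) and $d(e_{i+1})$ large with $p_{e_{i+1}}=1/2$; then $d(e_{i+1})h(p_{e_i})\gg l_i+l_{i+1}$. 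So the ``shifting the index'' and ``each term is comparable to $l_i$'' steps do not go through. The fix is simply to use the correct value $\mu(\{x_i\})=p_{e_{i+1}}$: then the interval of length at most $d(x,x_{i+1})-d(x,x_i)\le d(e_{i+1})$ carries the bound $h(p_{e_{i+1}})$, the product is exactly at most $l_{i+1}$, no index shift or constant is lost, the main sum is at most $\sum_i l_i\le\val_h(\CCT)$, and together with the tail bound you get the claimed factor $3$. Your remaining observations (only monotonicity of $h$ is used; the degenerate case $x=w$) are correct.
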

\begin{proof}
    We recall the construction of the measure $\mu$ sketched in the introduction.
Without loss of generality assume that the
edge probabilities satisfy $\sum_{e \in E[\CCT]} p_e = 1/2$. Letting $w \in X$
denote the root of $\CCT$, we define the $\mu$ as follows: firstly, we put mass
$1/2$ on $w$, and for every edge $e = \{u,v\} \in E[\CCT]$ directed towards the
root we put mass $p_e$ on $u$. One can directly verify that $\mu$ is indeed a
probability measure. Let us now relate the value of $\mu$ as
in~\eqref{eq:maj-meas-ub} to $\val_h(\CCT)$.  

Take $x \in X$. Let $x = v_1,v_2,\dots,v_k = w$ denote the sequence vertices of
$\CCT$ followed on the unique path from $x$ to $w$. For $i \in [k]$, define $r_i
= \sum_{j=1}^{i-1} d(v_j,v_{j+1})$, where $r_1 = 0$ by convention. Note that by
the triangle inequality, $d(x,v_i) \leq r_i$. In particular, for $i \in [k]$ and
$r \geq r_i$, we have $\mu(B(x,r)) \geq \mu(\{v_i\}) = p_{v_i,v_{i+1}}$, for $i
< k$, and $\mu(B(x,r)) \geq \mu(\{w\}) \geq 1/2$ for $i=k$. From here, we have that 
\begin{align}
\int_0^\infty h(\mu(B(x,r))) dr 
&= \left(\sum_{i=1}^{k-1} \int_{r_i}^{r_{i+1}} h(\mu(B(x,r))) dr\right) +
\int_{r_k}^\infty h(\mu(B(x,r))) dr \nonumber \\  
&\leq \sum_{i=1}^{k-1} d(v_i,v_{i+1}) h(p_{v_i,v_{i+1}}) + \diam(X) \cdot h(1/2) \nonumber \\
&\leq \sum_{e \in \CP_x} l_e + \diam(X) \cdot h(1/2) \leq 3\cdot \val_h(\CCT), \label{eq:tree-to-meas}
\end{align}
where the last inequality follows from~\eqref{eq:trivial-lb}. 
\end{proof}

\subsection{Packing Trees Gives Lower Bounds on $\mathbf{\gamma}_h(X)$}

\weakdual*

\begin{proof}

First of all, since nodes with a single child do not contribute to the value of the packing tree by definition, we may assume without loss of generality, that the packing tree does not have such nodes  (repeatedly replacing such a node by its sole child until necessary). Now, let $\rho$ be an arbitrary probability measure  on $X$.  For a node $A$ of $\CT$, define $S(A):=\bigcup_{x\in A}B(x,\frac12 \alpha^{\chi(A)} \cdot \diam(X))$. Consider an arbitrary node $V$ in the tree $\CT$ with children $C_1,\ldots,C_L$. Because of the properties of the packing tree, the sets $S(C_i)$ are pairwise disjoint. Consequently there is an $i \in [L]$ such that $\rho(S(C_i)) \leq 1/L$ because $\sum_{i=1}^L \rho(S(C_i)) \le 1$. This implies the existence of a path $V_0:=X,\ldots, V_k = \{x\}$ down the tree with $\rho(S(V_{j+1})) \leq 1/\deg(V_j)$ for all $j \in \{0\} \cup [k-1]$.

 Furthermore, since $\alpha \le 1/10$ and each node has more than one child, the definition of an $\alpha$-packing tree implies that the $\chi$ values increase by at least one on moving down a path. Let us write $r_j = \frac12 \alpha^{\chi(V_j)} \cdot \diam(X)$ for each $r \in \{0\} \cup [k]$ where $r_k=0$ by convention. Then, by definition, we have $x \in V_j$ for all $j \in \{0\} \cup [k]$, so also $S(V_j)\supseteq B(x,	r_j)$. This implies the following inequality,
\begin{align*}
\primal_h(\rho, X)&=\sup_{t \in X}\int_0^\infty h(\rho(B(t, \eps)))d\eps \geq\int_0^\infty h(\rho(B(x, \eps)))d\eps \ge \sum_{j=0}^k \int_{r_j}^{r_{j+1}} h(\rho(S(V_j))) d\eps
\end{align*}
where the last inequality holds since $h$ is decreasing and for $\eps \in (r_{j+1}, r_j)$ we have that $B(x, \eps) \subseteq B(x, r_j) \subseteq S(V_j)$.

From the above, we have that 
\begin{align*}
\ \primal_h(\rho, X) &\ge   \sum_{j=0}^k (r_j - r_{j+1}) \cdot  h\left(\frac{1}{\deg(V_j)}\right) \ge \frac12 \sum_{j=0}^k \alpha^{\chi(V_j)} (1-\alpha) \cdot h\left(\frac{1}{\deg(V_j)}\right)\ge \frac{1-\alpha}2 \cdot \valsep(\CT),
\end{align*}
which implies the statement of the lemma as $\rho$ was arbitrary.

\end{proof}

\subsection{Properties of Chaining Functionals}

We first show that chaining functionals of log-concave type satisfy sub-multiplicativity. Recall that we have a log-concave density  $f$ on the positive real line with the normalization $f(0)=1$ and $F(s) = \int_s^\infty f(s)ds$ is the corresponding complementary cumulative distribution function. Furthermore, $h(p) = F^{-1}(p)$ for $p \in (0,1]$  which also implies that $h'(1) = -1/f(0) = -1$. 

\submultiplicative*
\begin{proof}
Let us define $\phi: [0,\infty) \to [0, \infty)$ as $\phi(t):=h(e^{-t})$. We will show that $\phi$ is concave on its domain, which implies that $h$ is sub-additive as 
 \[ h(ab)=\phi\left(\log \frac1{ab}\right)\le \frac12 \phi\left(2\log \frac1a\right) + \frac12 \phi\left(2\log \frac1b\right) \leq \phi\left(\log \frac1a\right) + \phi\left(\log \frac1b\right) = h(a)+h(b),\]
 where both inequalities follows from concavity and $\phi(0)=0$.

To see that $\phi$ is concave, we show that $\phi'(t)=-e^{-t}h'(e^{-t}) = \frac{e^{-t}}{f(F^{-1}(e^{-t}))}$ is a decreasing function of $t$. Substituting $x=F^{-1}(e^{-t})$, and noting that $f$ is decreasing, it suffices to show that $\frac{F(x)}{f(x)}$ is decreasing for $x$ on the positive real line. Taking arbitrary $0 \le x_1 \le x_2$, we have that
\[ \frac{F(x_1)}{f(x_1)} = \int_0^\infty \frac{f(x_1+t)}{f(x_1)} dt \ge \int_0^\infty \frac{f(x_2+t)}{f(x_2)} dt =  \frac{F(x_2)}{f(x_2)},\]
where the inequality follows from the following elementary property of non-negative log-concave functions: for any four points $a\le b \le c \le d$, we have that $f(b)f(c) \ge f(a)f(d)$ which in the above scenario implies that $\frac{f(x_1+t)}{f(x_1)} \ge \frac{f(x_2+t)}{f(x_2)}$. This completes the proof of the proposition.

\end{proof}

Using the previous lemma, one can show the following property of log-concave chaining functionals. Recall our normalization that $h'(1)=-1$.

\derivative*
\begin{proof}
	As $h$ is decreasing, $h'(a) \le 0$ for $a \in (0,1]$. Therefore, for any $a$,
	\begin{align}\label{eqn:hp}
	\ ah'(a) = \lim_{\epsilon \to 0^+} \frac{a (h(a) - h(a(1-\epsilon)))}{a - a(1-\epsilon)} &\ge \lim_{\epsilon \to 0^+} \frac{h(a) - h(a) - h(1-\epsilon)}{1 - (1-\epsilon)} \notag \\
	\ & = \lim_{\epsilon \to 0^+} \frac{h(1) - h(1-\epsilon)}{1 - (1-\epsilon)} = h'(1) = -1, 
	\end{align}
	where the first inequality follows from sub-multiplicativity and the last equality holds since $h(1)=0$. The first statement in the proposition follows.
	
	To see the second statement, one can observe that as $h$ is decreasing, \eqref{eqn:hp} implies the following differential inequality : $h'(a) \ge -\frac{1}{a}$ for every $a \in (0,1]$. Together with the boundary condition that $h(1)=0$, this implies that $h(a) \le  \log(1/a)$.
\end{proof}

\section{Algorithm for Constructing Measures in \thmref{thm:constructive}}\label{sec:alg}

In this section, we prove the second part of \thmref{thm:constructive} and show that one can construct almost optimal solutions $\mu^*$ and $\nu^*$ for the primal and the entropic dual algorithmically. For this, we will use the algorithm for convex-concave optimization given by the following theorem. We recall our normalization that $h'(1)=-1$.

\newcommand{\infnorm}[1]{\|#1\|_{\infty}}

\begin{theorem}[\cite{JLSW20}]\label{thm:mirror}
	Let $\CX \subseteq B(0,R) \subseteq \BR^n$ and $\CY \subseteq B(0,R) \subseteq \BR^n$ be compact convex sets such that they both contain a Euclidean ball of radius $r$. Let $\varphi(x,y)$ be an $L$-Lipschitz function with respect to the Euclidean norm that is convex in $x$ and concave in $y$. Then, for any $0<\epsilon\le 1/2$, one can find $(x^*,y^*)$ such that
	\[ \max_{y \in \CY} \varphi(x^*,y) - \min_{x \in \CX} \varphi(x,y^*)  \le \epsilon L r,\]
deterministically in time 
\[ O\left(n^{1+\omega} \log\left(\frac{n}{\epsilon} \cdot \frac{R}{r} \right)  + n \log\left(\frac{n}{\epsilon} \cdot \frac{R}{r} \right) T\right),\]
where $T$ is the time required to compute a subgradient $\nabla \varphi = \nabla_x \varphi(x,y) - \nabla_y \varphi(x,y)$ and $\omega \le 2.373$ is the matrix multiplication exponent. 
\end{theorem}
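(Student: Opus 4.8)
The plan is to reduce the convex-concave saddle-point problem to a monotone variational inequality over the product domain $\CZ \defeq \CX \times \CY$, and to solve that variational inequality with the cutting-plane method of~\cite{JLSW20}. Note $\CZ \subseteq B(0,2R) \subseteq \BR^{2n}$ and $\CZ$ contains a Euclidean ball of radius $r$ in $\BR^{2n}$ (a product of two $r$-balls contains one), and saddle points of $\varphi$ over $\CX \times \CY$ exist by Sion's theorem. Define the operator $F(z) \defeq (\nabla_x \varphi(x,y),\, -\nabla_y \varphi(x,y))$ for $z=(x,y)\in\CZ$, interpreting $\nabla$ as an arbitrary subgradient where $\varphi$ is not differentiable; since $\varphi$ is $L$-Lipschitz in the Euclidean norm, $\|F(z)\|_2 \le \sqrt{2}\,L$ for all $z$, and $F$ is monotone (the standard fact that the gradient operator of a convex-concave function is monotone). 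The key elementary step is that, for all $z=(x,y)$ and $z'=(x',y')$ in $\CZ$, convexity of $\varphi(\cdot,y)$ and concavity of $\varphi(x,\cdot)$ give
\[
\varphi(x,y') - \varphi(x',y) \;\le\; \langle F(z),\, z - z'\rangle .
\]
Hence any $z=(x,y)$ with $\langle F(z), z - z'\rangle \le \eta$ for all $z'\in\CZ$ satisfies $\max_{y'\in\CY}\varphi(x,y') - \min_{x'\in\CX}\varphi(x',y) \le \eta$; taking $\eta = \epsilon L r$ is exactly the claimed guarantee, so it suffices to compute such an approximate variational-inequality solution.

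To do this I would feed the cutting-plane method of~\cite{JLSW20} the separation oracle obtained from $F$: on a query $z\in B(0,2R)$, compute $F(z)$ in time $O(T)$; since $F$ is monotone and every saddle point $z^\ast$ satisfies $\langle F(z^\ast), z^\ast - z\rangle \le 0$ (first-order optimality), monotonicity yields $\langle F(z), z^\ast - z\rangle \le 0$, so the halfspace $\{z' : \langle F(z), z' - z\rangle \le 0\}$ contains all saddle points and $-F(z)$ is a valid cut. Started from $B(0,2R)\supseteq\CZ$ and using that $\CZ$ contains a ball of radius $r$ and $\|F\|_2\le\sqrt2\,L$, the method runs for $k = O(n\log(nR/(r\epsilon)))$ iterations and returns queries $z_1,\dots,z_k$ together with weights $\lambda_1,\dots,\lambda_k\ge0$ summing to $1$ such that $\sum_i\lambda_i\langle F(z_i), z_i - z'\rangle \le \epsilon L r$ for every $z'\in\CZ$; this is the usual region-width accounting of cutting-plane methods, and it is where the two ball radii and the Lipschitz constant enter the iteration count. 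Setting $(x^\ast,y^\ast) \defeq \sum_i\lambda_i(x_i,y_i)$ and applying the displayed inequality with $z = z_i$ and then Jensen (convexity in $x$, concavity in $y$) gives $\max_{y'}\varphi(x^\ast,y') - \min_{x'}\varphi(x',y^\ast) \le \sum_i\lambda_i\,[\varphi(x_i,y') - \varphi(x',y_i)] \le \sum_i\lambda_i\langle F(z_i), z_i - z'\rangle \le \epsilon L r$ at the maximizing $z'$, which is the desired output.

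For the running time, each of the $O(n\log(nR/(r\epsilon)))$ iterations makes one oracle call of cost $O(T)$, contributing $O(n\log(nR/(r\epsilon))\cdot T)$ in total; the dimension being $2n$ rather than $n$ only affects constants. The remaining per-iteration work is the linear algebra internal to the method --- maintaining the inverse of a Gram-type matrix, updating the weights it keeps on its query points, and recentering --- and the essential quantitative input from~\cite{JLSW20} is that, via rectangular-matrix-multiplication-based batched updates, this amortizes to $O(n^{1+\omega}\log(nR/(r\epsilon)))$ over the whole run, rather than the $\Theta(n^3)$-per-iteration a naive implementation of Vaidya's volumetric-center method would incur. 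Adding the two contributions yields the stated bound. I expect the main obstacle to be precisely this amortized-linear-algebra analysis, which is the technical core of~\cite{JLSW20}: rather than reproving it I would invoke it as a black box, verifying only that our instance meets its interface --- a separation oracle returning a bounded-norm cut in time $T$ over a domain sandwiched between Euclidean balls of radii $r$ and $2R$ --- and that the target accuracy $\epsilon L r$ translates, through the Lipschitz bound $\|F\|_2 \le \sqrt 2 L$ and the inradius $r$, into the claimed $O(n\log(nR/(r\epsilon)))$ iteration count.
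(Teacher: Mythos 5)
This theorem is not proved in the paper at all: it is quoted verbatim as an external result of \cite{JLSW20}, so there is no internal proof to compare your attempt against. Your sketch --- reducing the saddle-point problem to the monotone variational inequality for $F(z)=(\nabla_x\varphi(x,y),-\nabla_y\varphi(x,y))$ on $\CX\times\CY$, using $-F(z)$ as the separating cut, and converting the weighted residual $\sum_i\lambda_i\langle F(z_i),z_i-z'\rangle\le\epsilon L r$ into a duality-gap bound for the averaged point via the inequality $\varphi(x,y')-\varphi(x',y)\le\langle F(z),z-z'\rangle$ and Jensen --- is correct, and it is essentially the derivation carried out inside \cite{JLSW20} itself (in the Nemirovski--Yudin / Lee--Sidford--Wong tradition); the domain and norm bookkeeping ($\CX\times\CY\subseteq B(0,2R)$ in $\BR^{2n}$, inradius $r$ preserved under products, $\|F\|_2\le\sqrt2\,L$) is also fine. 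Two caveats. First, the claim that the cutting-plane run ``returns weights $\lambda_i$'' satisfying the ergodic guarantee for all $z'\in\CZ$ is not generic ``region-width accounting'': a standard cutting-plane analysis only certifies that the surviving region is small, and extracting a convex-combination (dual) certificate with explicit weights is a specific theorem of \cite{JLSW20} and its predecessors, so it should be invoked by name rather than treated as folklore --- though since you explicitly declare that you are only verifying the interface of \cite{JLSW20}, this is acceptable. Second, the $O(n^{1+\omega}\log(\tfrac{n}{\epsilon}\cdot\tfrac{R}{r}))$ amortized linear-algebra term is likewise wholly inherited from \cite{JLSW20}; you correctly identify it as the technical core and do not attempt to reprove it, which matches the paper's own black-box treatment of the entire statement.
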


We remark that the one can replace $\omega$ in the above theorem with the rectangular matrix multiplication exponent where the matrices being multiplied are of dimensions $n\times n$ and $n \times n^{2/3+\varepsilon}$ for $\varepsilon > 0$.

In our context, the optimization problem we want to solve is given by
\begin{equation}\label{eqn:phi}
    \ \min_\mu \max_\nu \phi(\mu, \nu) ~~\text{ where }~~ \varphi(\mu, \nu) \triangleq \int_X \int_0^\infty h(\mu(B(t,\eps)))d\eps d \nu(x),
\end{equation}
which is convex in $\mu$ (as $h$ is convex on $(0,1]$) and concave in $\nu$. Here $\mu$ and $\nu$ lie in the standard simplex in $n$ dimensions. To avoid dealing with dimensionality issues, we observe that one can also optimize (for both $\mu$ and $\nu$) over the interior of the simplex without changing the value of the above optimization problem. 

To apply \thmref{thm:mirror}, we will take $\CX=\Delta^{1-1/e}_n$ and $\CY=\Delta^0_n$, where for $0 \le \alpha \le 1$, we define
\[ \Delta^\alpha_n = \left\{ \mu \in [0,1]^n ~~\Bigg|~~  \sum_{i=1}^d \mu_i \le 1 \text{ and } \mu_i \ge \frac{\alpha}{n} \text{ for each } i \in [n]\right\}.\]

Note that $\Delta^0_n$ is the standard simplex including its interior, while $\Delta^\alpha_n$ is a truncated simplex including its interior. The truncated simplex is needed to get a bound on the Lipschitz constant of $\varphi$ and we next show that taking $\CX = \Delta^{1-1/e}_n$ instead of $\Delta^{0}_n$ does not cause too much error.

\begin{claim} \label{clm:trnc}
Let $\mu$ be any probability measure on $X$ and let $0 \le \alpha \le 1$. Let $p = (1-\alpha)\mu + \alpha \cdot \frac{1}{n}\cdot \ind $ denote the probability measure that is the convex combination of $\mu$ with the uniform measure on $X$. Then, the following holds for any $t \in X$, 
    \[ \int_0^\infty h(p(B(t,\eps)))d\eps \le \int_0^\infty h(\mu(B(t,\eps)))d\eps + \log(1/(1-\alpha)) \cdot \diam(X). \]
\end{claim}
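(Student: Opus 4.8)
The plan is to reduce the statement to a pointwise-in-$\eps$ inequality and then integrate over $\eps\in[0,\diam(X)]$. First I would dispose of the trivial cases: if $\alpha=0$ then $p=\mu$ and the claimed inequality is an equality, and if $\alpha=1$ then $\log(1/(1-\alpha))=+\infty$ and there is nothing to prove, so I may assume $\alpha\in(0,1)$.

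The key observation is that adding the uniform component can only \emph{increase} the measure of every ball, and in particular never shrinks it below a $(1-\alpha)$-multiple of the original: since $p=(1-\alpha)\mu+\alpha\cdot\frac{1}{n}\cdot\ind$ and the uniform term is non-negative, for every $t\in X$ and every $\eps\ge 0$ we have $p(B(t,\eps))\ge(1-\alpha)\,\mu(B(t,\eps))$. As $h$ is non-increasing this gives $h(p(B(t,\eps)))\le h\bigl((1-\alpha)\mu(B(t,\eps))\bigr)$. I would then apply sub-multiplicativity of $h$ (\propref{prop:submultiplicative}) with $a=1-\alpha\in(0,1]$ and $b=\mu(B(t,\eps))\in(0,1]$, together with the bound $h(a)\le\log(1/a)$ from \propref{prop:derivative}, to conclude that, whenever $\mu(B(t,\eps))>0$,
\[
  h(p(B(t,\eps))) \;\le\; h(1-\alpha)+h(\mu(B(t,\eps))) \;\le\; \log\!\bigl(1/(1-\alpha)\bigr)+h(\mu(B(t,\eps))).
\]
The only slightly delicate point is the degenerate case $\mu(B(t,\eps))=0$, which occurs only when $\mu(\{t\})=0$: here monotonicity of $h$ already gives $h(p(B(t,\eps)))\le h(\mu(B(t,\eps)))$ for the natural extension $h(0)\defeq\lim_{q\to0^+}h(q)$, and if that limit is $+\infty$ the right-hand side of the claim is infinite and there is nothing to prove. (In the intended application $\mu$ lies in the interior of the simplex, so this case never arises.)

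Finally, since $B(t,\eps)=X$ and hence $h(\mu(B(t,\eps)))=h(p(B(t,\eps)))=h(1)=0$ for $\eps\ge\diam(X)$, integrating the pointwise bound over $\eps$ yields
\begin{align*}
  \int_0^\infty h(p(B(t,\eps)))\,d\eps
    &= \int_0^{\diam(X)} h(p(B(t,\eps)))\,d\eps \\
    &\le \int_0^{\diam(X)} \Bigl( h(\mu(B(t,\eps))) + \log(1/(1-\alpha))\Bigr)\,d\eps \\
    &= \int_0^\infty h(\mu(B(t,\eps)))\,d\eps + \log(1/(1-\alpha))\cdot\diam(X),
\end{align*}
which is exactly the claim. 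I do not expect any real obstacle here; the argument just combines the two structural properties of $h$ already established (\propref{prop:submultiplicative} and \propref{prop:derivative}) with the elementary pointwise comparison of the two measures, and the only care needed is the bookkeeping in the degenerate $\mu(\{t\})=0$ case.
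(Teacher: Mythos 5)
Your proof is correct and is essentially the paper's own argument: both reduce to the pointwise bound $h(p(B(t,\eps))) \le h((1-\alpha)\mu(B(t,\eps))) \le h(\mu(B(t,\eps))) + h(1-\alpha) \le h(\mu(B(t,\eps))) + \log(1/(1-\alpha))$ via monotonicity, \propref{prop:submultiplicative} and \propref{prop:derivative}, and then integrate over $\eps \in [0,\diam(X)]$. Your extra care with the degenerate case $\mu(B(t,\eps))=0$ is a harmless refinement the paper leaves implicit.
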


The above shows that setting $\alpha=1-1/e$ only causes an additive error of $\diam(X))$. We will prove the above claim later; let us first introduce the other claims we need to finish the analysis of the algorithm. 

Next, we show that for any $\mu \in \Delta^{1-1/e}_n$, we can get a reasonable bound on the Lipschitz constant of $\varphi$.

\begin{claim} \label{clm:lip}
Let $\mu \in \CX$ and $\nu \in \CY$. Then, $\varphi(\mu,\nu)$ is $L$-Lipschitz with respect to the Euclidean norm with $L =O(n^{3/2} \cdot \diam(X))$.
Moreover, a sub-gradient $\nabla\varphi = \nabla_\mu \varphi - \nabla_\nu \varphi$ at any point $(\mu, \nu)$ can be computed in $O(n^2 \log n)$ time.
\end{claim}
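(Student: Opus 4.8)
For $\mu \in \CX = \Delta^{1-1/e}_n$ and $\nu \in \CY = \Delta^0_n$, the function $\varphi(\mu,\nu) = \int_X \int_0^\infty h(\mu(B(t,\eps)))d\eps\, d\nu(x)$ is $L$-Lipschitz in the Euclidean norm with $L = O(n^{3/2}\cdot \diam(X))$, and a subgradient can be computed in $O(n^2\log n)$ time.

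Let me think about this. We need to bound the gradient of $\varphi$ with respect to both $\mu$ and $\nu$.

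\textbf{Plan.} Write $\varphi(\mu,\nu) = \sum_{x\in X}\nu(x)\,H(\mu,x)$, where $H(\mu,x) = \int_0^{\diam(X)} h(\mu(B(x,\eps)))\,d\eps$ (we may truncate at $\diam(X)$ since $h(1)=0$). The single fact driving the Lipschitz bound is that for $\mu\in\CX=\Delta^{1-1/e}_n$ every ball has large mass: since $x\in B(x,\eps)$ for all $\eps\ge 0$, we have $\mu(B(x,\eps))\ge\mu_x\ge\delta$ where $\delta := (1-1/e)/n$, so $1/\delta < 2n$ and $\log(1/\delta) = O(\log n)$. For the $\nu$-gradient, $\partial\varphi/\partial\nu(y) = H(\mu,y)$, and by \propref{prop:derivative} together with monotonicity of $h$, $H(\mu,y)\le \diam(X)\cdot h(\delta)\le \diam(X)\log(1/\delta) = O(\diam(X)\log n)$; hence $\|\nabla_\nu\varphi\|_2 \le O(\sqrt n\,\diam(X)\log n) = O(n^{3/2}\diam(X))$.

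For the $\mu$-gradient, note $\partial\varphi/\partial\mu(z) = \sum_{y}\nu(y)\int_0^{\diam(X)} s\bigl(\mu(B(y,\eps))\bigr)\,\mathbf{1}[z\in B(y,\eps)]\,d\eps$, where $s(a)$ denotes any element of the subdifferential of $h$ at $a$ (the composition $\eps\mapsto\mu(B(y,\eps))$ is a step function, and $h$, being convex, is differentiable except at countably many points, so this is well defined a.e.). By \propref{prop:derivative}, $-1\le a h'_-(a)\le a h'_+(a)\le 0$, so every subgradient $s(a)$ satisfies $|s(a)|\le 1/a$, and therefore $|s(\mu(B(y,\eps)))|\le 1/\delta < 2n$. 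Consequently $|\partial\varphi/\partial\mu(z)|\le \sum_y\nu(y)\int_0^{\diam(X)}\tfrac1\delta\,d\eps = \diam(X)/\delta = O(n\,\diam(X))$, giving $\|\nabla_\mu\varphi\|_2 \le O(n^{3/2}\diam(X))$. Combining, the Euclidean subgradient $\nabla\varphi = \nabla_\mu\varphi - \nabla_\nu\varphi$ has norm $O(n^{3/2}\diam(X))$ uniformly over $\CX\times\CY$, which is the claimed $L = O(n^{3/2}\diam(X))$ (convexity in $\mu$ and concavity in $\nu$ make this pointwise bound a genuine Lipschitz bound).

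For the runtime, I would invoke the standard preprocessing (already used repeatedly in this paper, computable once in $O(n^2\log n)$ time): sorted neighbour lists, the $O(n)$ breakpoints of $\eps\mapsto B(x,\eps)$ for each $x$, and the values $\mu(B(x,\eps))$ at all breakpoints. Then $\nabla_\nu\varphi$ is computed coordinate-by-coordinate: $H(\mu,y) = \sum_j h(\mu(B(y,\eps_j)))(\eps_{j+1}-\eps_j)$ over $y$'s $O(n)$ breakpoints, so $O(n^2)$ total and $O(n^2)$ evaluations of $h$. For $\nabla_\mu\varphi$, rewrite $\partial\varphi/\partial\mu(z) = \sum_y \nu(y)\,G_y(d(y,z))$ with $G_y(r) := \int_r^{\diam(X)} s(\mu(B(y,\eps)))\,d\eps$; for each $y$ the function $G_y$ is piecewise linear with breakpoints among $y$'s distance breakpoints, and its values at those breakpoints are a suffix sum computable in $O(n)$ time (using $O(n)$ evaluations of $h'$). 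Finally each coordinate $z$ is a sum of $n$ lookups $G_y(d(y,z))$, each $O(\log n)$ by binary search in $y$'s breakpoint table, for $O(n^2\log n)$ overall. The main obstacle is precisely this last part: done naively (integrating over $\eps$ inside the double sum over $z$ and $y$) one gets $O(n^3)$, and the reorganization through the precomputed suffix-sum tables $G_y$ is what brings it down to $O(n^2\log n)$.
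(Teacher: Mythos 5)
Your proposal is correct and follows essentially the same route as the paper: bound $\|\nabla_\nu\varphi\|_\infty$ by $O(\diam(X)\log n)$ via $h(a)\le\log(1/a)$, bound $\|\nabla_\mu\varphi\|_\infty$ by $O(n\,\diam(X))$ via $|h'(a)|\le 1/a$ and the lower bound on ball masses from the truncated simplex, convert to the Euclidean norm with a $\sqrt{n}$ factor, and organize the $\nabla_\mu\varphi$ computation via precomputed breakpoints and suffix sums to get $O(n^2\log n)$. The only cosmetic difference is that the paper uses the slightly looser mass bound $\mu(t)\ge 1/(en)$ and phrases the suffix-sum lookup as a ``straightforward dynamic program'' rather than an explicit binary search, neither of which changes anything.
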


Given \clmref{clm:trnc} and \clmref{clm:lip}, we can finish the analysis of the algorithm first.
First, note that both $\CX$ and $\CY$ are contained in the Euclidean ball of radius $R=1$ and secondly, both $\CX$ and $\CY$ contain a ball of radius $r=\Omega(1/n)$ in their interior --- one can see that for any $0 \le \alpha \le 1$, the convex set $\Delta^\alpha_n$ contains a ball of radius $\frac{1-\alpha}{2n}$ around the point $\frac{1+\alpha}{2n}\cdot \ind$ where $\ind$ is the all ones vector in $\BR^n$.

We set $\epsilon = c \frac{1}{\sqrt{n}}$ for a suitable constant $c$, so that $\epsilon L r =  \diam(X)$. Then, \thmref{thm:mirror} and \clmref{clm:lip} imply that deterministically in $O(n^{1+\omega}\log^2 (n))$ time, one can find measures $\mu^*$ and $\nu^*$ satisfying 
\[ \max_{\nu \in \CY} \varphi(\mu^*,\nu) - \min_{\mu \in \CX} \varphi(\mu,\nu^*)  \le  \diam(X).\]

Since the duality gap (the left hand side above) must be non-negative because of Sion's minimax Theorem, it follows that $\mu^*$ and $\nu^*$ are almost optimal solutions, up to an additive error of $\diam(X)$, while optimizing over $\CX$ and $\CY$. 

Finally, as \clmref{clm:trnc} implies that  optimizing over the truncated simplex does not cause too much error, the above solutions $\mu^*$ and $\nu^*$ are almost optimal solutions to the optimization problem in \eqref{eqn:phi}, up to an additional additive error of $2\diam(X)$.

To finish, we prove \clmref{clm:trnc} and \clmref{clm:lip}.

\begin{proof}[Proof of \clmref{clm:trnc}]
    As $h$ is decreasing, for any $a,b \in (0,1]$ satisfying $(1-\alpha) a + \alpha b \in (0,1]$  we have that
    \[ h((1-\alpha) a + \alpha  b) \le h((1-\alpha )a) \le h(a) + h(1-\alpha) \le h(a) + \log(1/(1-\alpha)),\]
    where the second inequality uses the sub-multiplicativity property of $h$ and the last inequality follows from \pref{prop:derivative}. It follows that 
    \begin{align*}
         \ \int_0^\infty h(p(B(t,\eps)))d\eps &\le \int_0^\infty \Big(h(\mu(B(t,\eps))) + \log(1/(1-\alpha)) \cdot \ind[{\eps \le \diam(X)}]\Big)d\eps,\\
         \ &= \int_0^\infty h(\mu(B(t,\eps)))d\eps + \log(1/(1-\alpha)) \cdot \diam(X). \qedhere
    \end{align*}
\end{proof}

\begin{proof}[Proof of \clmref{clm:lip}]
    First, we note that for any $\mu \in \CX,\nu \in \CY$, we have that
    \begin{align}\label{eqn:grad}
         \ \nabla_\mu \varphi &= \left(\int_0^X \int_0^\infty h'(\mu(B(x,\eps))) \cdot \ind[t \in B(x,\eps)] d\eps d\nu(x)\right)_{t \in X}  \\
         \ \nabla_\nu \varphi &= \left( \int_0^\infty h(\mu(B(t,\eps))) d\eps\right)_{t \in X}   
    \end{align}
    which are both vectors in $\BR^{n}$ by identifying the $n$ coordinates with the index set $X$. We bound the infinity norm of the above gradients separately.
    
    Since, $\mu(t) \ge \frac{1}{en}$ for every $t \in X$, using the fact that $h$ is decreasing together with \pref{prop:derivative}, we have that for every $t \in X$, the following holds
        $$\int_0^\infty h(\mu(B(t,\eps)) d\eps \le \int_0^{\diam(X)} h\left(\frac{1}{en}\right)d\eps \le \log (en)  \cdot \diam(X).$$ This gives us that
    \begin{equation}\label{eqn:nu}
           \infnorm{\nabla_\nu \varphi} \le \log (en)  \cdot \diam(X).
    \end{equation}

    To bound the infinity norm of $\nabla_\mu \varphi$, we note that $|h'(a)|\le 1/a$ for every $a \in (0,1]$ due to \pref{prop:derivative}. Then, since  $\mu(B(t,\eps)) \ge 1/(en)$ for every $t \in X$, we have that for every $t \in X$, it holds that
    $$\int_0^\infty |h'(\mu(B(t,\eps))| d\eps \le \int_0^{\diam(X)} en \cdot d\eps  \le en \cdot \diam(X),$$
    giving us that 
    \begin{equation}\label{eqn:mu}
         \infnorm{\nabla_\mu \varphi} \le     en\cdot \diam(X).
    \end{equation}
   
   Using \eqref{eqn:mu} and \eqref{eqn:nu}, we have that Euclidean norm of the subgradient $\nabla \varphi = \nabla_\mu \varphi - \nabla_\nu \varphi$ is at most $L = O(n^{3/2} \cdot \diam(X))$. This gives us that $\varphi$ is $L$-Lipschitz over $\CX \times \CY$.
   
   \paragraph{Running Time.} Next we show that the subgradient can be computed in time $O(n^2 \log n)$ at any point $(\mu, \nu)$. First note that we can sort all pairs of points in the index set $X$ in time $O(n^2 \log n)$ according to the pairwise distances. Once they are sorted, for a fixed $x \in X$, one can compute all distinct values of $\eps$ such that the neighborhoods $B(x,\eps)$ change --- this can be done in $O(\log n+n) = O(n)$ time for each $x \in X$ by binary search and a linear scan.
   
   Given any fixed $x \in X$, let $0=\eps_0<\eps_1(x) < \eps_2(x) < \cdots < \eps_k(x)$ denote the distinct values of $\eps$ where $k = k(x)$ depends on $x$. Then, for a fixed $t\in X$, note that the integrals in \eqref{eqn:grad} are given by
   \begin{align*}
         \ \int_0^\infty h(\mu(B(t,\eps))) d\eps &= \sum_{\kappa=0}^{k(t)} h(\mu(B(t,\eps_{\kappa-1}(t)) \cdot (\eps_{\kappa}-\eps_{\kappa-1}), \text{ and }, \\
         \ \int_X \int_0^\infty h'(\mu(B(x,\eps))) \cdot \ind[t \in B(x,\eps)] d\eps d\nu(x) &= \sum_{x \in X} \nu(x) \sum_{\kappa=k_*(t,x)+1}^{k(x)} h'(\mu(B(x,\eps_{\kappa-1}(x)) \cdot (\eps_{\kappa}-\eps_{\kappa-1}), 
   \end{align*}
 where $k_*(t,x)$ is the smallest index $k$ such that $t \in B(x,\eps_k(x))$ (we set $k_*(t,x)=\infty$ if no such index exists).
   
   From the above, by using a straightforward dynamic program, one can compute the above integrals in $O(n)$ time for a given $t \in X$. This implies that the subgradient $\nabla \varphi$ can be computed in time $O(n^2 \log n + n \cdot n) = O(n^2 \log n)$.
   
\end{proof}

\section{Proof of \propref{clm:match}}\label{sec:hall}

\begin{proof}
    As we will modify the graph $G$, let us write $\partial_H(x)$ to denote the set of edges of the graph $H$ incident on the vertex $x$ and similarly, let $N_H(S)$ denote the neighborhood of $S$ in the graph $H$. 

    To prove the proposition, we may assume without loss of generality that $\supp(\nu)=X_2$. To construct the sequences $S_i$'s and $\beta_i$'s, we iterate the following process where we initialize $i=1$ and define $G_0=G$, and $S_0 = \emptyset$, and $N_{G_0}(S_0)=\emptyset$. Let us set $\beta_0=0$. 
    \begin{itemize}
        \item  Remove the vertices $S_{i-1} \cup N_{G_{i-1}}(S_{i-1})$ and all edges incident on them from the graph $G_{i-1}$ to obtain the graph $G_i$. Choose  $\beta_i = \min\left\{ \frac{\nu(N_{G_i}(S'))}{\mu(S')} \mid \emptyset \neq S' \subseteq X_1 \setminus S_{i-1} \right\}$ and choose $S'_i$ to be a maximal set $S'$ that achieves this minimum.  We define $S_i = S'_i \cup S_{i-1}$. Keeping in line with the intuition presented, we note that $S'_i$ is the least matchable set in the graph $G_i$ and only a $\beta_i$ fraction of mass in $S'_i$ can be transported to $N_{G_i}(S'_i)$, although we will not need this fact in the proof. 
 \item  If $S_i = X_1$, we terminate the process and define $k=i$, otherwise we increment $i$ and repeat the above.
    \end{itemize}

	We next show that $0<\beta_1<\beta_2<\cdots<\beta_{k-1} < \beta_k$ which also implies that the process terminates as the sets $\emptyset = S_0 \subset S_1 \subset \cdots \subset S_k= X_1$ form a strictly increasing family.  The fact that $\beta_1 > 0$ follows since we assumed that $\supp(\nu)=X_2$. To argue that the sequence $(\beta_i)_{i}$ is strictly increasing, we note that for any $1 < i \le k$,  the following holds
    \begin{align*}
        \ \beta_{i-1} \mu(S_i \setminus S_{i-2}) < \nu(N_{G_{i-1}}(S_i \setminus S_{i-2})) &= \nu(N_{G_{i}}(S_i \setminus S_{i-1})) + \nu(N_{G_{i-1}}(S_{i-1} \setminus S_{i-2}))\\
        \                                        &=\beta_i \mu(S_i \setminus S_{i-1}) + \beta_{i-1}  \mu(S_{i-1} \setminus S_{i-2}),
    \end{align*}
    where the strict inequality holds due to the maximality of $S'_i$. Rearranging the above implies that $\beta_{i-1}  \mu(S_i \setminus S_{i-1})< \beta_i \mu(S_i \setminus S_{i-1}) \implies \beta_{i-1} < \beta_{i}$.

Finally, we argue that the sets $S_i$'s and the sequence $(\beta_i)_i$ satisfies the required properties. The first statement follows directly from the definition of the sets $S'_i$. To see the second statement, we note that for every $i \in [k]$ and $A \subseteq X_1 \setminus S_{i-1}$, it holds that $N_{G_i}(A) = N_G(A)\setminus N_G(S_{i-1})$ and so by definition of $\beta_i$, we have $\beta_i \mu(N_G(A)) \le \nu(N_G(A)\setminus N_G(S_{i-1}))$. This completes the proof.

\end{proof}

\section{Construction of Admissible nets from Labelled nets}
\label{sec:proof_admissible_nets}

Recall that $g$ denotes the Gaussian chaining functional and $\val_2$ denotes the value of labelled nets, admissible nets, etc. with respect to the Gaussian functional. For the following proof, we will use that the Gaussian functional satisfies that $g(p^2) \ge C g(p)$ for $p \in [0,1)$ where $C$ is a universal constant satisfying $1<C\le 2$, in addition to sub-multiplicativity. Note that for any log-concave functional $h$ we have that $h(p) \le h(p^2) \le 2h(p)$, so the above condition is slightly stronger.

\begin{lemma}
Given a labelled net $\LN$, there exists a deterministic algorithm that constructs an admissible net $\CA$ satisfying $\val_2(\CA)\lesssim \val_2(\LN)$ in $O(n^2)$ time.
\end{lemma}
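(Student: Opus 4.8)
The goal is to convert a labelled net $\LN$ on $X$ (with $n = |X|$) into an admissible net $\CA = (\CA_0, \CA_1, \ldots, \CA_k)$, i.e.\ a sequence of successively refined partitions with $|\CA_i| \le 2^{2^i}$, whose value $\val_2(\CA) = \max_x \sum_i g(2^{-2^i})\diam(\CA_i(x))$ is $\lesssim \val_2(\LN)$. This is essentially the same kind of argument as the "labelled nets to chaining trees" construction in Section~\ref{sec:lab-nets}: in a labelled net the product of the $\sigma$-labels down a path is controlled because children are ordered by decreasing ball measure, and this is exactly the resource we need to keep the partition sizes doubly-exponentially bounded. Let me think about how I'd do this.

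Let me think about this more carefully. In the labelled net, each node $V$ at "diameter level" $m(V)$ has $\diam(V) \le \alpha^{m(V)}\diam(X)$, children have strictly larger $m$, and along a root-to-leaf path the product $\prod_{W} \sigma(W)$ (over the proper ancestors $W$ weighted by the ordering index) telescopes nicely against the ball-measure decrease. The natural plan: first, I would want to pass to a cleaned-up labelled net where the label $m$ increases by \emph{exactly} one at each edge (splitting a long edge $V \to C$ with $m(C) = m(V)+t$ into a chain of $t$ single-child nodes; single-child nodes contribute $h(1/1)=g(1)=0$, so the value is unchanged). Now the tree is "level-complete": every node at depth $d$ has $m = d$ and $\diam \le \alpha^d \diam(X)$. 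For each depth $d$, let $\CP_d$ be the partition of $X$ induced by the depth-$d$ nodes together with all leaves of depth $< d$ (a leaf persists as its own singleton part). Then $\CP_0 = \{X\} \supseteq \CP_1 \supseteq \cdots$ is a refinement sequence and $\diam(\CP_d(x)) \le \alpha^d \diam(X)$.

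The remaining issue is that $|\CP_d|$ can be as large as $n$, not $2^{2^i}$. This is handled exactly as in the chaining-tree construction: the key quantity is how fast the partition sizes grow. Along the tree, if $V$ has children $C_1, \ldots, C_\ell$ ordered so that $\rho(B(t_j, r))$ is decreasing in $j$ (where $\rho$ is the measure used to build $\LN$), then by the averaging/disjointness fact recorded after the greedy ball partitioning (namely $\rho(B(t_j, r)) \le 1/j$), we get that $\sigma(C_j) = j$ with $\rho(\text{ball around } t_j) \le 1/j$. So the number of nodes of $\LN$ whose root-path satisfies $\prod \sigma(W) \le N$ is at most... this is the same bound that makes the chaining-tree probabilities sum to a constant. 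Concretely I would define the admissible net by \emph{coarsening}: for each $i \ge 0$, let $\CA_i$ be the partition obtained from $\CP_{2^i}$ by merging parts whose tree-nodes share a common ancestor with small product-of-$\sigma$ label, i.e.\ keep only the "top $2^{2^i}$" nodes at depth $2^i$ (ordered by $\prod_{\text{anc}} \sigma$) as separate parts and glue each remaining depth-$2^i$ node to the part of its nearest kept ancestor. Because $\sum_W g(1/\sigma(W))$ down any path is what bounds $\val_2(\LN)$, and because gluing node $V$ to an ancestor $A$ with $\prod \sigma$-label $> 2^{2^i}$ costs at most $\diam(A) \le \alpha^{(\text{depth of }A)}\diam(X)$ which by sub-multiplicativity and $g(2^{-2^i}) \gtrsim g(1/\prod\sigma(W))$ along that sub-path is absorbed into $\val_2(\LN)$, the value only blows up by a constant. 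Here the condition $g(p^2) \ge Cg(p)$ (equivalently $g(2^{-2^i})$ versus $g(2^{-2^{i-1}})$, which differ by a bounded factor per step and sum geometrically) is what lets me telescope the diameters $\alpha^{2^i}\diam(X)$ against the $g(2^{-2^i})$ weights.

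\textbf{Main obstacle.} The delicate point — and the step I'd expect to take the most care — is the bookkeeping that simultaneously (i) keeps $|\CA_i| \le 2^{2^i}$ and (ii) shows the coarsening only inflates the value by $O(1)$. The tension is that merging nodes at depth $2^i$ increases the diameter of parts (each merged part can have diameter up to the diameter of the merge-ancestor, which lives at a shallower depth), and one must show this increase is paid for by the $g(1/\sigma(\cdot))$ terms that \emph{were} present on the pruned sub-paths of $\LN$ but are now absent from $\CA$. This requires a careful charging argument almost identical in spirit to the proof of \lref{lem:chainingtree}: assign to each $x$ its $\LN$-path, split it at the depths $2^0, 2^1, 2^2, \ldots$, and show $\sum_i g(2^{-2^i})\diam(\CA_i(x)) \lesssim \sum_{(V,W)\in\CP_x} \alpha^{m(V)}\diam(X)\, g(1/\sigma(W)) = \val_2(\LN)$ by (a) using $g(2^{-2^i}) \lesssim \sum_{W \text{ on the segment from depth } 2^{i-1} \text{ to } 2^i} g(1/\sigma(W))$ whenever the product of $\sigma$'s on that segment exceeds $2^{2^{i-1}}$ (which is forced precisely when a merge happens), and (b) using sub-multiplicativity of $g$ together with $\sum_{j \ge 0} j\alpha^j = O(1)$ for $\alpha \le 1/10$ to handle the geometric tails. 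The $O(n^2)$ runtime is then immediate: the cleaned labelled net is a laminar family of size $O(n)$, the depths $2^i$ that matter are $O(\log n)$ many, and at each we do $O(n)$ work to form and coarsen the partition — but I would double-check that sorting children by $\sigma$ (already available from the construction of $\LN$) and computing diameters of parts fits in the $O(n^2)$ budget, which it does since each of the $O(n)$ laminar sets has its diameter computed once in $O(|V|) \le O(n)$ time.
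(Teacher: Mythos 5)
Your toolkit is the right one (a product-of-$\sigma$ potential along root-to-leaf paths, a doubly-exponential counting bound, sub-multiplicativity of $g$, and the growth condition $g(p^2)\ge Cg(p)$ to telescope), but your construction is genuinely different from the paper's, and the step you yourself flag as ``the main obstacle'' is exactly where it breaks down as described. The paper does \emph{not} index the partitions by depth: it sets $\Psi(V)=\prod_{j}(2\sigma(V_j))^2$ over the ancestors of $V$ and takes $\CA_i$ to be the cut of the tree along the level set $\Psi<2^{2^i}$, i.e.\ the maximal nodes of potential below $2^{2^i}$. The cardinality bound $|\CA_i|\le 2^{2^i}$ then comes from a Markov-type count (each node has a distinct $\sigma$-sequence and $\sum_k 4^{-k}\sum\prod\ell_m^{-2}<1$), and — crucially — each partition cell \emph{is} a single tree node, so its diameter is $\alpha^{m}\diam(X)$ for the $m$-label of that very node, and the value comparison reduces to a clean telescoping $\sum_j g(2^{-2^j})(\alpha^{m_{j-1}}-\alpha^{m_j})$ using $g(2^{-2^j})\ge Cg(2^{-2^{j-1}})$.

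In your version the cell of $x$ in $\CA_i$ is, after coarsening, a union of depth-$2^i$ nodes glued to some shallower ancestor $A$, and you must pay $g(2^{-2^i})\cdot\diam(A)$ with $\diam(A)\approx\alpha^{\depth(A)}\diam(X)$. The budget available in $\val_2(\LN)$ on the path segment below $A$ is $\sum_W\alpha^{m(P(W))}\diam(X)\,g(1/\sigma(W))$, whose weights decay geometrically below $\depth(A)$; your proposed charge via sub-multiplicativity, $g(2^{-2^i})\le\sum_W g(1/\sigma(W))$, discards those weights entirely, so if the large $\sigma$-values sit deep in the segment the charge fails by an unbounded factor. What you would actually need is that the potential already exceeds $2^{2^i}$ \emph{at the depth of the merge target} — but ``not among the top $2^{2^i}$ nodes at depth $2^i$'' does not give you that, and ordering by $\prod\sigma$ (rather than $\prod(2\sigma)^2$) does not even guarantee that a discarded node has potential $\gtrsim 2^{2^i}$. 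The natural fix is to define the merge target as the deepest ancestor with potential below $2^{2^i}$, at which point your cells become exactly the paper's potential level-set cut and the depth-$2^i$ scaffolding is superfluous. As written, the central inequality $\val_2(\CA)\lesssim\val_2(\LN)$ is not established.
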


\begin{proof}
We essentially repeat the construction given in \cite[Theorem 6.30]{vH16} which uses the approximation $\sqrt{\log(1/p)}$ for the Gaussian functional $g(p)$. For a node $V$ in the labelled net, let $V_0=X, V_1, \ldots, V_k = V$ be the path from the root to the node $V$. Then, define the following potential
\begin{align*}
\ \Psi(V)= \prod_{i=1}^k (2\sigma(V_i))^2,
\end{align*}
where the potential of the root is defined to be zero. The value of the potential always increases as one goes down the tree, so for $i \in \mathbb{N}$, let us define the cut $\CA_i$ in the tree given by the set of nodes $V$ where the value of the potential $\Psi(V) < 2^{2^i}$ but for any node $W$ below the cut, the value of the potential $\Psi(W) \ge 2^{2^i}$. Observe that  $(\CA_i)_{i\in \mathbb{N}}$ forms an increasing sequence of partitions and it can be computed in $O(n^2)$ time. We claim that $(\CA_i)_{i\in \mathbb{N}}$ forms a valid admissible net as $|\CA_i| \le 2^{2^i}$. This follows since
\begin{align*}
|\CA_i|&\leq \sum_{k=1}^\infty \left|\left\{\{\ell_1,\ldots, \ell_k \} \subseteq \N \mid 4^k\prod_{m=1}^k \ell_m^2<2^{2^i} \right\}\right|\\
&\leq 2^{2^i}\sum_{k=1}^\infty 4^{-k}\sum_{\{\ell_1,\ldots, \ell_k \} \subseteq \N}\prod_{m=1}^k \ell_m^{-2} =  2^{2^i} \sum_{k=1}^\infty \left(\frac{\pi^2}{24}\right)^k < 2^{2^i}.
\end{align*}

Next we argue that $\val_2(\CA)\lesssim \val_2(\LN)$. Consider any $x \in X$ and let $V_0=X, V_1(x), \ldots, V_{l(x)}(x)=\{x\}$ be the path from root to the leaf $x$ in the labelled net $\CL$. Then, suppressing the dependence on $x$ for brevity, we claim that
\begin{align}\label{eqn:admnet}
\   c(\alpha) \cdot \val_2(\CL) &= c(\alpha) \cdot \max_{x \in X} \sum_{i=1}^l \alpha^{m(V_{i-1})}\cdot \diam(X) \cdot g\left(\frac{1}{\sigma(V_i)}\right) \notag \\
\ & \ge \max_{x \in X}  \sum_{i=1}^l  \alpha^{m(V_{i-1})}\cdot \diam(X) \cdot g\left(\frac{1}{\Psi(V_i)}\right) 
\end{align}
for a constant $c(\alpha)$. This follows from sub-multiplicativity of $g$ analogous to the proof of \lref{lem:chainingtree}.

Next we use the other direction of sub-multiplicativity that the Gaussian functional satisfies to argue that the right hand side of \eqref{eqn:admnet} is at least the value of the admissible net, up to constant factors. In particular, for any $x\in X$, consider the path from the root to the leaf $\{x\}$ in the labelled net $\CL$. For an integer $j \ge 0$, define $\CS_j = \CS_j(x)$ to be the set of vertices $V$ that satisfy $2^{2^j} \le \Psi(V) < 2^{2^{j+1}}$. Let us also define $m_j = m_j(x)= \max\{ m(V) \mid V \in \CS_j \}$ and let $J=J(x)$ be the maximum index $J$ such that the corresponding partition element indexed by $j$ that contains $x$, denoted $\CA_j(x)$ is the singleton set $\{x\}$. Note that for any vertex $V \in \CS_j$, we have that $g(1/\Psi(V)) \ge g(2^{-2^j})$. Therefore, denoting by $P(V)$ the parent of $V$ in the labelled net $\CL$, the right hand side of \eqref{eqn:admnet} for the given $x$ can be lower bounded as follows: 
\begin{align*}
 \sum_{i=1}^l  \alpha^{m(V_{i-1})}\cdot \diam(X) \cdot g\left(\frac{1}{\Psi(V_i)}\right) &=\diam(X) \sum_{j \in [J]} \sum_{V \in \CS_j} \alpha^{m(P(V))} g(2^{-2^j}) \\
\  &=\diam(X)\sum_{j \in [J]} g(2^{-2^j})\sum_{V \in \CS_j} \alpha^{m(P(V))} \\
\ & \ge \frac{\alpha}{2(1-\alpha)}  \cdot \diam(X) \sum_{j \in [J]} g(2^{-2^j}) \left(\alpha^{m_{j-1}} - \alpha^{m_j}\right) 
\end{align*}

Using that $g(2^{-2^j})  \ge C g(2^{-2^{j-1}})$ for $C>1$, and that $\diam(\CA_j(x)) \le \alpha^{m_{j-1}} \cdot \diam(X)$, by reindexing (note that $\diam(A_J(x))=0$), we see that
\begin{align*}
\ \diam(X) \sum_{j \in [J]} g(2^{-2^j}) \left(\alpha^{m_{j-1}} - \alpha^{m_j}\right) \geq (C-1) \sum_{j\in [J]} g(2^{-2^j}) \cdot \diam(\CA_j(x)).
\end{align*}
By taking the maximum over $x$, it follows from \eqref{eqn:admnet} and the above that $\val_2(\CL)\gtrsim \val_2(\CA)$.
\end{proof}

\end{document}